\theoremstyle{definition}
\newtheorem{theorem}{Theorem}
\newtheorem{corollary}{Corollary}
\newtheorem{lemma}{Lemma} 
\newtheorem{definition}{Definition}
\newtheorem{property}{Property}
\newcommand{\Cc}{\mathcal{C}}
\newcommand{\calC}{\mathcal{C}}
\newcommand{\N}{\operatorname{N}}
\newcommand{\iso}{\cong}
\newcommand{\dist}{\operatorname{dist}}
\newcommand{\cupdot}{\mathbin{\mathaccent\cdot\cup}}
\title{An invariant for minimum triangle-free graphs}
\author{Oliver Kr\"uger}
\begin{document}

\maketitle

\begin{abstract}
We study the number of edges, $e(G)$, in triangle-free graphs with a prescribed
number of vertices, $n(G)$, independence number, $\alpha(G)$, and number of
cycles of length four, $\N(C_4;G)$. We in particular show that
$$3e(G) - 17n(G) + 35\alpha(G) + \N(C_4;G) \geq 0$$
for all triangle-free graphs $G$. We also characterise the graphs that satisfy
this inequality with equality.
\end{abstract}

\section{Introduction}

\subsection{Background}

The \emph{(minimum) edge numbers}, $e(3,k,n)$, are defined as the minimum number of edges in a triangle-free graph on $n$ vertices without an independent set of size $k$. These numbers, and constructions of some related graphs, have successfully been used to compute, or bound, the classical two-colour Ramsey numbers $R(3,\ell)$. In particular for $\ell = 6$ by Kalbfleisch \cite{kalbfleisch}, for $\ell = 7$ by Graver and Yackel \cite{graver-yackel} and for $\ell = 9$ by Grinstead and Roberts \cite{grinstead-roberts}. Among the useful upper bounds on the Ramsey numbers $R(3,\ell)$ that have been obtained by these considerations are those of Radziszowski and Kreher (e.g. \cite{radziszowski-kreher91}).

In particular Radziszowski and Kreher proved, in \cite{radziszowski-kreher91}, that $e(3,k+1,n) \geq 6n-13k$ for all non-negative integers $n$ and $k$. One may differently phrase that result by saying that $t(G) := e(G) - 6n(G) + 13 \alpha(G) \geq 0$ for all triangle-free simple graphs $G=(V,E)$ where $e(G) = |E|$ denotes the number of edges, $n(G) = |V|$ the number of vertices and $\alpha(G)$ the independence number of $G$. Moreover the triangle-free graphs $G$ for which $t(G) = 0$ have been classified in part by Radziszowski and Kreher in \cite{radziszowski-kreher91} and completely by Backelin in \cite{jart}. The invariant $t$ is just one in a series of invariants of a similar kind, all of which give bounds on the edge-numbers and for which there is a classification of the triangle-free graphs that satisfy them with equality. In particular we have $e(G) \geq 0$, $e(G) - n(G) + \alpha(G) \geq 0$, $e(G) - 3n(G) + 5\alpha(G) \geq 0$ and $e(G) - 5n(G) + 10\alpha(G) \geq 0$, with full classification of graphs for which we have equality (see, e.g. \cite{radziszowski-kreher91}).

In this paper we consider a related invariant, $\nu(G)$, which we define as
$$\nu(G) = 3e(G) - 17n(G) + 35 \alpha(G) + \N(C_4;G),$$
where $\N(C_k;G)$ denotes the number of cycles of length $k$ in $G$. We will,
in particular, show that $\nu(G) \geq 0$ for all triangle-free graphs $G$
(see Theorem \ref{mainthm} in Section \ref{nuzerographs}).
 
This affirmatively answers a question first considered in \cite{carc}. We also give a classification of the graphs that satisfy this inequality with equality. We will see that this bound is tight since there are (infinitely many) triangle-free graphs $G$ for which $\nu(G) = 0$. Interestingly, these graphs seem to be closely related to those for which $t(G) = 0$. In particular there are infinitely many triangle-free graphs for which $t(G) = \nu(G) = 0$.

A specialisation of the bound $\nu(G) \geq 0$ in Theorem \ref{mainthm}. Is the bound $e(C_{\leq 4},k+1,n) \geq \frac{17}{3}n - \frac{35}{3}k$, where $e(C_{\leq 4},k+1,n)$ is the number of edges in a graph containing no cycle of length at most four on $n$ vertices without an independent set of size $k + 1$.

It is not wholly unnatural to involve the quantity $\N(C_4;G)$ in the bound. Assume that $G$ is some triangle-free graph. Consider, similarly to Graver and Yackel in \cite{graver-yackel}, the following
$$2e^2(G) := \sum_{v \in V(G)} d(v)^2,$$
where $V(G)$ denotes the set of vertices in $G$ and $d_G(v) = d(G;v)$ denotes the valency of the vertex $v$ in the graph $G$, where we leave out the subscript if it is clear which graph we are considering from the context. Let $d^2(v)$, called the \emph{second valency} of a vertex $v$, be defined as
$d^2(v) = d^2(G;v) = \sum_{w \in N(v)} d(w)$, where $N_G(v)$ denotes the neighbourhood of $v$ (set of vertices adjacent to $v$) in $G$, where again $G$ is dropped from notation if clear from context.

We will by $G_v$ denote the induced subgraph of $G$ obtained by removing $v$ and all its neighbours. We let $\N(C_k;G,S)$ denote, where $S \subseteq V(G)$ is a subset of vertices of $G$, the number of cycles of length $k$ in $G$ that contains at least one vertex from $S$. If $S = \{v\}$ we will omit the use of set parentheses and write $\N(C_k;G,v)$ instead of $\N(C_k;G,\{v\})$.

We have that
$$e^2(G) - e^2(G_v) = \sum_{w \in N(v)} \left( d^2(w) + \binom{d(w)}{2} \right) - \binom{d(v)}{2} - (\N(C_4;G) - \N(C_4;G,v)).$$
Differences of these kinds are quite essential to the methods of Graver and Yackel and the quantities involved in such differences might therefore be interesting to study in relation to graphs with low edge numbers.

\subsection{Graphs with $\nu$-value zero and the main theorem}
\label{nuzerographs}

We already know of some triangle-free graphs $G$ such that $\nu(G) = 0$. We will here describe all graphs with $\nu$-value zero. That these are indeed all such graphs will be demonstrated in the conclusion of this article.

We need to define the following class of graphs (which appears in \cite{jart} and \cite{carc} as \emph{chains} denoted by $Ch_k$, in \cite{radziszowski-kreher91} as $F_k$ and in \cite{cfjones} as $H_k$). These graphs will also play an important role in our proofs. 

\begin{definition}
\label{defChk}
Let $Ch_2$ be a cycle of length five. We recursively define $Ch_{k+1}$ for $k \geq 2$. Let $x \in V(Ch_k)$ be some bivalent vertex. Let $V(Ch_{k+1}) = V(Ch_k) \cupdot \{v,w_1,w_2\}$ and $E(Ch_{k+1}) = E(Ch_k) \cup \{vw_1,vw_2,w_1x\}\cup \{w_2y; y \in N(x)\}$.
\end{definition}

It is easy to verify that $Ch_k$ is then well-defined for $k \geq 2$, i.e. up to isomorphism the result does not depend on the choice of bivalent vertex in the recursive construction. It is also easy to check that $n(Ch_k) = 3k-1$, $e(Ch_k) = 5k-5$, $\alpha(Ch_k) = k$ and $\N(C_4;Ch_k) = k-2$. Hence, $\nu(Ch_k) = 0$ for all $k \geq2$.

There are two connected 3-regular graphs with $\nu$-value 0. These have been characterised in \cite{carc}. Using the same notation as there we define the graphs $(2C_7)_{2i}$ and $W_5$ as follows. Let $V((2C_7)_{2i}) = \{a_0,a_1,\dots,a_6\}\cup \{b_0,b_1,\dots,b_6\}$ and the edges of $(2C_7)_{2i}$ be such that both $a_0,a_1,\dots,a_6$ and $b_0,b_1,\dots,b_6$ form cycles of length seven in $(2C_7)_{2i}$. Connect these two cycles by adding an edge $b_ia_{2i}$ for all $i \in \{0,1,\dots,6\}$, taking indices modulo 7.

\begin{figure}[h]
\begin{center}
\begin{tikzpicture}[scale=1,rotate=90]
  \GraphInit[vstyle=Classic]
  \renewcommand*{\VertexSmallMinSize}{4pt}
  \Vertices[Math,unit=2.2,Lpos=90]{circle}{a_0,a_1,a_2,a_3,a_4,a_5,a_6}
  \Vertices[Math,unit=1,Lpos=20,Ldist=-4]{circle}{b_0,b_4,b_1,b_5,b_2,b_6,b_3}
  \Edges(a_0,a_1,a_2,a_3,a_4,a_5,a_6,a_0,b_0,b_1,b_2,b_3,b_4,b_5,b_6,b_0)
  \Edge(a_1)(b_4)
  \Edge(a_2)(b_1)
  \Edge(a_3)(b_5)
  \Edge(a_4)(b_2)
  \Edge(a_5)(b_6)
  \Edge(a_6)(b_3)
\end{tikzpicture}
\end{center}
\caption{The graph $(2C_7)_{2i}$.}
\end{figure}
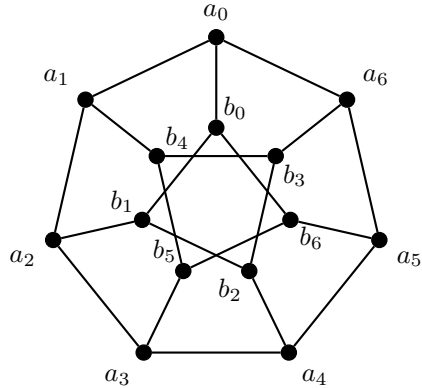
This graph is also known as a generalised Petersen graph, variously denoted $\textnormal{GP}(7,2)$ or $\textnormal{P}(7,2)$.

Let $V(W_5) = \{a_0,a_1\}\cup \{b_0,\dots,b_4\} \cup \{c_0,\dots,c_7\}$ and the edges of $W_5$ be such that $a_0a_1$ are adjacent, $b_0,\dots,b_4$ are independent and $c_0,\dots,c_7$ form a cycle of length eight. Add edges $b_ia_i$ for $i \in \{1,2,3,4\}$ taking $a_i$-indices modulo 2. Also add edges $b_ic_{2i}$ and $b_ic_{2i + 3}$ for $i \in \{1,2,3,4\}$ taking indices modulo $8$.

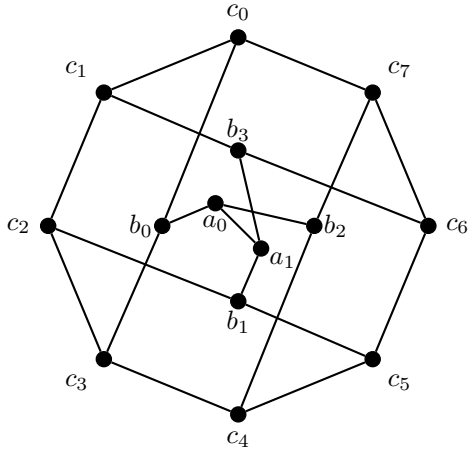
\begin{figure}[h]
\begin{center}
\begin{tikzpicture}[scale=1,rotate=90]
  \GraphInit[vstyle=Classic]
  \renewcommand*{\VertexSmallMinSize}{4pt}
  \Vertices[Math,unit=2.5,Lpos=90]{circle}{c_0,c_1,c_2,c_3,c_4,c_5,c_6,c_7}
  \Edges(c_0,c_1,c_2,c_3,c_4,c_5,c_6,c_7,c_0)
  \begin{scope}[rotate=0]
    \Vertices[Math,unit=1,Lpos=90,Ldist=-3]{circle}{b_3,b_0,b_1,b_2}
  \end{scope}
  \Vertex[Math,x=0.3,y=0.3,Lpos=-90,Ldist=-2]{a_0}
  \Vertex[Math,x=-0.3,y=-0.3,Lpos=-80,Ldist=-5]{a_1}
  \Edge(a_0)(a_1)
  \Edge(a_0)(b_0)
  \Edge(a_0)(b_2)
  \Edge(a_1)(b_1)
  \Edge(a_1)(b_3)
  \Edge(b_0)(c_0)
  \Edge(b_0)(c_3)
  \Edge(b_2)(c_7)
  \Edge(b_2)(c_4)
  \Edge(b_1)(c_2)
  \Edge(b_1)(c_5)
  \Edge(b_3)(c_1)
  \Edge(b_3)(c_6)
\end{tikzpicture}
\end{center}
\caption{The graph $W_5$.}
\end{figure}

Property \ref{no3regular} will show, after Theorem \ref{mainthm} has been established, that these two 3-regular graphs are the only 3-regular connected graphs graphs with $\nu$-value zero. This extends a result in \cite{carc} that states that these two graphs are the only two 3-regular graphs with $\nu$-value zero that neither contains cycles of length three nor of length four.

Let $BC_k$, $k \geq 4$, be a graph consisting of an induced cycle on vertices $c_1,c_2\dots,c_{2k}$ and one induced cycle on vertices $d_1,d_2\dots,d_k$. Connect the cycles by edges $d_ic_{2i-2}$ and $d_ic_{2i+1}$ for $i \in \{1,\dots,k\}$, taking indices modulo $2k$ for $c_i$s and modulo $k$ for $d_i$s. The graphs $BC_k$ have been called \emph{bicycles} (in \cite{carc} and \cite{jart}) or \emph{extended $k$-chains} (in \cite{cfjones}, denoted $E_k$) and $G_k$ (in \cite{radziszowski-kreher91}).

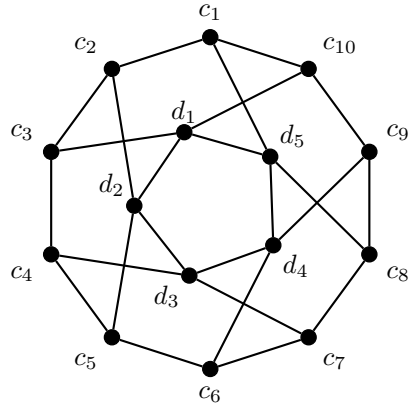
\begin{figure}[h]
\begin{center}
\begin{tikzpicture}[scale=1,rotate=90]
  \GraphInit[vstyle=Classic]
  \renewcommand*{\VertexSmallMinSize}{4pt}
  \Vertices[Math,unit=2.2,Lpos=90]{circle}{c_1,c_2,c_3,c_4,c_5,c_6,c_7,c_8,c_9,c_{10}}
  \begin{scope}[rotate=20]
  \Vertices[Math,unit=1,Lpos=90,Ldist=-3]{circle}{d_1,d_2,d_3,d_4,d_5}
  \end{scope}
  \Edges(c_1,c_2,c_3,c_4,c_5,c_6,c_7,c_8,c_9,c_{10},c_1)
  \Edges(d_1,d_2,d_3,d_4,d_5,d_1)
  \Edge(d_1)(c_3)
  \Edge(d_1)(c_{10})
  \Edge(d_2)(c_5)
  \Edge(d_2)(c_2)
  \Edge(d_3)(c_7)
  \Edge(d_3)(c_4)
  \Edge(d_4)(c_9)
  \Edge(d_4)(c_6)
  \Edge(d_5)(c_1)
  \Edge(d_5)(c_8)
\end{tikzpicture}
\end{center}
\caption{The graph $BC_5$.}
\end{figure}

Note that we have $n(BC_k) = 3k$, $e(BC_k) = 5k$. It is not difficult to show that $\alpha(BC_k) = k$. Moreover, for $k \geq 5$ we have $\N(C_4; BC_k) = k$. Hence $\nu(BC_k) = 0$. In the case $k = 4$ we have one ``extra'' cycle of length four formed by the vertices $d_1,d_2,d_3$ and $d_4$ and because of this we have $\nu(Ch_4) = 1$.

Note also that we will not be considering the empty graph, $G = (\emptyset,\emptyset)$. If one were to consider it however, then it is reasonable to define $n(G) = e(G) = \alpha(G) = \N(C_4;G) = 0$. Thus also $G$ would be a graph with $\nu$-value zero. All graphs in this paper will however be assumed to be non-empty.

We will prove the following theorem, which is the main result of this paper, in Section \ref{conclusion}. We now introduce the following notation for the family of graphs with $\nu$-value zero that have been defined in this section.
$$\mathcal{G} := \{W_5, (2C_7)_{2i}\} \cup \{Ch_k; k \geq 2\} \cup \{BC_k; k \geq 5\}.$$

\begin{theorem} \label{mainthm}
If $G$ is a triangle-free graph then $\nu(G) \geq 0$, and if $\nu(G) = 0$, with $G$ connected, then
$G \in \mathcal{G}$.
\end{theorem}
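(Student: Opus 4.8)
The plan is to argue by induction on the number of vertices $n(G)$. Since each of $e$, $n$, $\alpha$ and $\N(C_4;\cdot)$ is additive over connected components — every $4$-cycle lies in a single component — the invariant $\nu$ is additive as well, so $\nu(G)\ge 0$ follows componentwise from the connected case, and a disconnected $G$ with $\nu(G)=0$ has every component of $\nu$-value $0$; hence it suffices to treat connected $G$, which is exactly the hypothesis under which the equality statement is phrased. The graphs on few vertices are checked directly, as are the stars $K_{1,\delta}$, which occur when $N[v]=V(G)$.

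For the inductive step, fix a vertex $v$ of minimum degree $\delta=\delta(G)$ and set $G_v=G-N[v]$. Triangle-freeness makes $N(v)$ independent, so no edge of $G$ has both endpoints in $N(v)$; a short count then gives $n(G)-n(G_v)=\delta+1$, $e(G)-e(G_v)=d^2(v)=\sum_{w\in N(v)}d(w)$, and $\N(C_4;G)-\N(C_4;G_v)=\N(C_4;G,N[v])$. Also $\alpha(G)\ge\alpha(G_v)+1$, since $v$ together with a maximum independent set of $G_v$ is independent. Combining these with $\nu(G_v)\ge 0$ from the induction hypothesis,
$$\nu(G)\;\ge\;3\,d^2(v)-17\delta+18+\N(C_4;G,N[v])\;\ge\;3\delta^2-17\delta+18+\N(C_4;G,N[v]),$$
using $d^2(v)\ge\delta^2$. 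As $3x^2-17x+18\ge 0$ for $x\le 1$ and for $x\ge 5$, this already disposes of every $\delta\notin\{2,3,4\}$, and it also disposes of $\delta\in\{3,4\}$ whenever some neighbour of $v$ has degree at least $5$, because then the $d^2(v)$-term by itself compensates; for $\delta=2$ one wins as soon as $d^2(v)\ge 6$, i.e. unless $v$ has two neighbours of degree $2$, or neighbours of degrees $2$ and $3$.

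The core of the argument is thus the case $\delta\in\{2,3,4\}$ with all neighbours of $v$ of small degree, where the polynomial term is negative ($-4,-6,-2$ in the three extremal degree configurations) and one must exhibit enough $4$-cycles meeting $N[v]$, or else extra slack in the independence number. Triangle-freeness is the lever: two neighbours of $v$ with a common second neighbour produce a $4$-cycle through $v$, so a scarcity of $4$-cycles pins the local structure down to something very rigid — a short chain of bivalent vertices when $\delta=2$ (this is precisely where the step $Ch_k\mapsto Ch_{k+1}$ with $G_v\cong Ch_k$ lives), and a near-regular piece of girth at least $5$ when $\delta=3,4$. The genuinely hard subcase is when $G$ is $3$-regular of girth $\ge 5$: there the bound $\alpha(G)\ge\alpha(G_v)+1$ is simply too weak, the $N[v]$-reduction does not close, and a separate non-inductive treatment is needed, which is where the classification of \cite{carc} (giving exactly $W_5$ and $(2C_7)_{2i}$) is invoked. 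The second-valency identity $e^2(G)-e^2(G_v)=\sum_{w\in N(v)}\bigl(d^2(w)+\binom{d(w)}{2}\bigr)-\binom{d(v)}{2}-(\N(C_4;G)-\N(C_4;G,v))$ from the introduction is the natural bookkeeping device for the required $4$-cycle counts.

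The equality case is then handled by running the induction backwards. If $\nu(G)=0$ with $G$ connected, then for the chosen $v$ every inequality above must be tight: $\nu(G_v)=0$, $\alpha(G)=\alpha(G_v)+1$, and $d^2(v)$ together with $\N(C_4;G,N[v])$ attain the extremal values making the displayed estimate an equality. By the induction hypothesis, each component of $G_v$ then lies in $\mathcal G$, and the rigidity constraints dictate how $v$ and its neighbours are attached: this forces $G\in\{Ch_{k+1},BC_{k+1}\}$ when $G_v$ is the corresponding chain or bicycle, while the $3$-regular analysis contributes exactly $W_5$ and $(2C_7)_{2i}$. I expect the main obstacle to be precisely the low-degree local analysis — the $\delta=3$ case and the $3$-regular graphs — where the clean reduction to $\nu(G_v)$ fails and one must either appeal to \cite{carc} or carry out a delicate direct estimate; the equality reconstruction itself is then essentially bookkeeping once the extremal local configurations have been enumerated.
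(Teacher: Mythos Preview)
Your framework matches the paper's: induction on $n(G)$, additivity over components, and the $G\mapsto G_v$ reduction with the estimate $\nu(G)\ge\nu(G_v)+3d^2(v)-17\delta+18+\N(C_4;G,N[v])$. The arithmetic identifying $\delta\in\{2,3,4\}$ as the critical range is correct.

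The genuine gap is that you have misidentified which residual case is hard. You flag the $3$-regular, girth~$\ge 5$ case as the one where the reduction fails and plan to close it by citing \cite{carc}. That part is fine, and the paper does exactly this (Property~\ref{no3regular}). But the paper then spends considerable effort (Properties~\ref{classprop2}--\ref{classprop10}) proving that a minimal counterexample must in fact be \emph{$4$-regular and $C_4$-free}. In that situation your displayed bound gives a deficit of $-2$ with $\N(C_4;G,N[v])=0$, so neither the $4$-cycle term nor the $\alpha$-slack is available, and there is no analogue of the \cite{carc} corollary to cite. Your proposal says nothing about how to close this case; ``exhibit enough $4$-cycles meeting $N[v]$'' cannot work when there are none, and you give no mechanism for the needed independence-number gain $\alpha(G)\ge\alpha(G_v)+2$.

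This is where the real work of the paper lies. Section~\ref{mincounter} analyses $H=G_v$ for the $4$-regular counterexample: the twelve trivalent vertices of $H$ form an induced subgraph $H_3$, and a sequence of lemmas adapted from Radziszowski--Kreher (\cite{radziszowski-kreher91}, \cite{jart}) forces $H_3\cong 6K_2$, then derives contradictory statements about $6$-cycles in $G$. None of this is visible from the single-step reduction you describe. Your equality-case sketch (``rigidity constraints dictate how $v$ and its neighbours are attached'') likewise understates what is required: the paper needs the full chain of Properties~\ref{classprop1}--\ref{classprop10} to pin down the structure, including the delicate argument (Property~\ref{classprop10}) ruling out $\delta(G)=3$ for a counterexample outside $\mathcal G$.
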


This means that the only triangle-free connected graphs with $\nu$-value zero are those that are those that we have defined in this section. Since $\nu$ is linear, in the sense that if $H_1 + H_2$ is the disjoint union of graphs $H_1$ and $H_2$ then $\nu(H_1 + H_2) = \nu(H_1) + \nu(H_2)$, we get that it is enough to classify the connected graphs with $\nu$-value zero as in Theorem \ref{mainthm}.

\subsection{Outline of the proof}

To establish Theorem \ref{mainthm} we, in Section \ref{prelnot}, introduce some preliminary results which we will need later while simultaneously establishing the bulk of the notation that will be used throughout the paper.

In Section \ref{theinvariant} we start the systematic study of the properties of the invariant $\nu$ for triangle-free graphs. In particular, in Section \ref{asssec} we derive some properties, relating to $\nu$, for graphs $G$ such that all proper subgraphs of $G$ have non-negative $\nu$-value and all proper subgraphs with $\nu$-value zero are in the family of graphs $\mathcal{G}$. When we later establish Theorem \ref{mainthm} all properties in this section will state general properties that hold for all triangle-free graphs $G$, since then the condition on the proper subgraphs always holds. The general idea is to derive some local properties for the structure of the neighbourhoods of low valency vertices in subgraphs of $G$ with low $\nu$-value.

Later, in Section \ref{mincounter} we strengthen our assumption, for a contradiction, on $G$ to say that $G$ is a minimal counter-example to satisfying Theorem \ref{mainthm}. This means that we assume that $G$ is a triangle-free graph such that $\nu(G) < 0$ or $\nu(G) = 0$ but $G \notin \mathcal{G}$, while $\nu(H) \geq 0$ with equality if and only if $H \in \mathcal{G}$ for all proper subgraphs $H$ of $G$. Unlike the results from Section \ref{theinvariant} the properties we derive in Section \ref{mincounter} do not give us general results for triangle-free graphs relating to $\nu$ since we begin by assuming something which we will show leads to a contradiction.

In Section \ref{graphswithvalencies} we will mimic the work of Radziszowski and Kreher in \cite{radziszowski-kreher91} with the use of a slight modification of their proof mentioned by Backelin in \cite{jart}. This section mostly consists of reformulating their results to make them fit into our particular context.  

\subsection{Preliminaries and notation}
\label{prelnot}

We start by stating some preliminary lemmas which we will use later. These are for the most part easy results.

We will let $G-e$ denote, for $e \in E(G)$ the graph obtained by removing the edge $e$ from the graph $G$. A graph is called \emph{edge-critical} if it has the property that for all edges $e \in E(G)$ its independence number increases as we remove it, i.e. $\alpha(G-e) > \alpha(G)$. This property is also commonly known as the graph being $\alpha$-critical (for example in \cite{berge}). It is easily verified that all the graphs $G$ such that $\nu(G) = 0$ defined in the previous section are edge-critical. In fact, it will follow from Theorem \ref{mainthm} that all triangle-free graphs $G$ such that $\nu(G) < 3$ must be edge-critical.

By abuse of notation we will often say that graphs are equal when they in fact are merely isomorphic. It should be clear from the context that we only need to consider graphs up to isomorphism in such instances. Therefore we either use the notation $H \iso G$ or $H = G$ to say that the graphs $H$ and $G$ are isomorphic.

If $S$ is a set and $k \geq 1$ we will let $\binom{S}{k}$ denote the set of all subsets of $S$ of size $k$.

A vertex $v \in V(G)$ will be said to be \emph{monovalent} (resp. \emph{bivalent}, \emph{trivalent}, \emph{tetravalent} etc.) if $d(v) = 1$ ($d(v) = 2$, $d(v) = 3$, $d(v) = 4$, etc.) in $G$.

\begin{lemma} (Lemma 2.4 in \cite{jart})
\label{ecalpha}
If $G$ is an edge-critical triangle-free graph, then $\alpha(G_v) = \alpha(G) - 1$ for all $v \in V(G)$.
\end{lemma}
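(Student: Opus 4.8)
The plan is to prove the two inequalities $\alpha(G_v) \le \alpha(G)-1$ and $\alpha(G_v) \ge \alpha(G)-1$ separately. For the first, I would take a maximum independent set $I$ of $G_v$; since $G_v$ is the induced subgraph obtained by deleting $v$ and all of $N(v)$, no vertex of $I$ is adjacent to $v$, so $I \cup \{v\}$ is an independent set in $G$, giving $\alpha(G) \ge |I| + 1 = \alpha(G_v) + 1$. Note this direction uses nothing about edge-criticality and holds for every graph; it is only here that we use the specific definition of $G_v$ as $G$ minus the closed neighbourhood of $v$.

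For the reverse inequality $\alpha(G_v) \ge \alpha(G) - 1$, the natural idea is to take a maximum independent set $J$ of $G$ and argue that we can choose $J$ so that $|J \cap (N(v) \cup \{v\})| \le 1$, since then $J$ minus that one vertex is an independent set living inside $G_v$, of size at least $\alpha(G) - 1$. The only obstruction is that $v \notin J$ while $J$ meets $N(v)$ in more than one vertex. This is exactly where edge-criticality and triangle-freeness enter: I would use Lemma~2.4's hypothesis to ``move'' the independent set. Concretely, if $v \notin J$ then since $G$ is edge-critical, pick any edge $vu$ with $u \in N(v)$; removing $vu$ increases the independence number, so there is an independent set of size $\alpha(G)+1$ in $G - vu$, which must contain both $v$ and $u$. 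Restricting this set to $V(G) \setminus \{u\}$ gives an independent set of $G$ of size $\alpha(G)$ that contains $v$ — so we may assume without loss of generality $v \in J$. But then independence forces $J \cap N(v) = \emptyset$, so $J \setminus \{v\} \subseteq V(G_v)$ is independent of size $\alpha(G)-1$, completing the argument.

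The main thing to get right — and the one genuine subtlety — is the replacement step showing that a maximum independent set may be taken to contain $v$. One must check that the independent set of size $\alpha(G)+1$ produced by edge-criticality of $G-vu$ contains $both$ endpoints of $vu$ (otherwise it would already be independent in $G$, contradicting maximality of $\alpha(G)$), and that after deleting $u$ the result is independent in the original $G$ and has the right size. Triangle-freeness is not strictly needed for this particular lemma but is part of the standing hypothesis; the only structural facts used are edge-criticality and the definition of $G_v$. Everything else is a short set-theoretic manipulation.
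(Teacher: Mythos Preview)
Your proposal is correct and follows essentially the same line as the paper's proof: both prove $\alpha(G_v)\le\alpha(G)-1$ by adjoining $v$ to a maximum independent set of $G_v$, and both obtain the reverse inequality by applying edge-criticality to an edge $vu$ incident with $v$, producing an independent set of size $\alpha(G)+1$ in $G-vu$ that contains both endpoints, then discarding $u$ (and $v$) to land in $G_v$. The paper phrases the second half as a contradiction from $\alpha(G_v)\le\alpha(G)-2$ rather than as ``choose $J$ to contain $v$'', but the content is identical; your remark that triangle-freeness is not actually used here is also correct, and you should note (as the paper does) that the isolated-vertex case $d(v)=0$ must be handled separately since there is then no edge $vu$ to pick.
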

\begin{proof}
It is obviously true for $v \in V(G)$ such that $d(v) = 0$. Suppose therefore that $v$ is at least monovalent.

We see that $\alpha(G_v) \leq \alpha(G) - 1$ since $S \cup \{v\}$ is an independent set of $G$ for all $v \in V(G)$ and any maximum independent set $S$ of $G_v$.

If $\alpha(G_v) + 2 \leq \alpha(G)$ then there would be a maximum independent set, $S$, of size at least $\alpha(G_v) + 3$ in $G - e$, where $e = \{v,w\}$, since $G$ is edge-critical. Then $v \in S$ and thus no $G$-neighbours of $v$ other than $w$ is in $S$. Hence $S\setminus \{v,w\}$ is an independent set of size at least $\alpha(G_v) + 1$ in $G_v$, a contradiction.
\end{proof}

For a vertex $v$ in a graph $G$ we denote the vertices at distance exactly $k$ from $v$ in $G$ by $N_{k,G}(v)$. If there is no ambiguity for the graph $G$ we will just write $N_k(v)$.
For $S \subseteq V(G)$, we shall let $G[S]$ denote the induced subgraph on $S$ and by $G\setminus S$ the induced subgraph $G[V(G) \setminus S]$. If $S = \{v\}$ contains only one element we sometimes omit the usage of set parentheses and write $G\setminus v$ for $G\setminus \{v\}$. If $T \subseteq E(G)$ then we let $G - T$ denote the graph $(V(G),E(G)\setminus T)$ where the edges in $T$ are removed from $G$. If $T = \{e\}$ we will similarly omit the usage of set parentheses. Note in particular the distinction between $G \setminus e$ and $G - e$. A set of vertices $S \subseteq V(G)$ is said to \emph{destabilise} $G$ if $\alpha(G\setminus S) < \alpha(G)$. If $S$ destabilises $G$ then the set $S$ is called a \emph{destabiliser}. If $S$ is a destabiliser of $G$ such that for all proper subsets $T \subsetneq S$ the set $T$ does not destabilise $G$, then $S$ is called a \emph{minimal destabiliser}. If $G$ has no destabilisers of size $r$ or less then $G$ is said to be \emph{$r$-stable}.

We say that a subset $S \subseteq V(G)$ is \emph{connected in $G$} if $G[S]$ is a connected graph. We will use $V_k(G)$, for $k \geq 0$, to denote the set of all vertices in $G$ of valency exactly $k$, i.e. $V_k(G) = \{v \in V(G); d(v) = k\}$.

\begin{lemma} \label{lemma:alpha}
If $G$ is a triangle-free graph, $v \in V(G)$ and $N_2(v)$ does not destabilise $G_v$ then $\alpha(G) \geq \alpha(G_v) + d(v)$.
\end{lemma}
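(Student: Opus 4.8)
The plan is to build an explicit independent set of $G$ of the required size out of two pieces: the neighbourhood $N(v)$ and a maximum independent set of the part of $G$ lying at distance at least three from $v$. First I would observe that since $G$ is triangle-free, $N(v)$ is itself an independent set (two adjacent neighbours of $v$ would form a triangle with $v$), and $|N(v)| = d(v)$. If $d(v) = 0$ then $G_v = G$ and the statement is trivial, so assume $v$ is at least monovalent.

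Next I would identify the induced subgraph $G_v \setminus N_2(v)$. Its vertex set is $V(G) \setminus (\{v\} \cup N(v) \cup N_2(v))$, which is precisely the set of vertices at distance at least three from $v$ in $G$ (including vertices in components other than that of $v$). Let $S$ be a maximum independent set of $G_v \setminus N_2(v)$, so $|S| = \alpha(G_v \setminus N_2(v))$. The hypothesis that $N_2(v)$ does not destabilise $G_v$ means exactly that $\alpha(G_v \setminus N_2(v)) = \alpha(G_v)$, so $|S| = \alpha(G_v)$.

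Then I would check that $S \cup N(v)$ is an independent set of $G$. The sets are disjoint since $S \subseteq V(G_v)$ and $N(v) \cap V(G_v) = \emptyset$. The set $S$ is independent by choice; $N(v)$ is independent by triangle-freeness; and there is no edge between $S$ and $N(v)$, because a vertex of $S$ lies at distance at least three from $v$ while every vertex of $N(v)$ lies at distance one from $v$, so an edge joining them would put the $S$-vertex at distance at most two from $v$, a contradiction. Hence $\alpha(G) \geq |S \cup N(v)| = |S| + |N(v)| = \alpha(G_v) + d(v)$, as claimed.

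This lemma is elementary and I do not anticipate a genuine obstacle; the only point that needs care is making sure the non-destabilising hypothesis is invoked correctly, namely to guarantee that discarding $N_2(v)$ from $G_v$ does not shrink the independence number, so that the piece $S$ we glue to $N(v)$ is as large as $\alpha(G_v)$.
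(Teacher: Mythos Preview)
Your proof is correct and follows essentially the same approach as the paper: take a maximum independent set of $G_v$ avoiding $N_2(v)$ (which exists precisely because $N_2(v)$ does not destabilise $G_v$) and adjoin $N(v)$, using triangle-freeness to see the result is independent. One small slip: when $d(v)=0$ you have $G_v = G\setminus\{v\}$, not $G_v = G$; but the inequality is still immediate in that case, and in fact your main argument already handles it without the case split.
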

\begin{proof}
Suppose that $N_2(v)$ does not destabilise $G_v$. Then there is an independent set, $I$, of size $\alpha(G_v)$ in $G_v$ such that $I \cap N_2(v) = \emptyset$. Since $G$ is triangle-free we have that $I \cup N(v)$ is an independent set, of size $\alpha(G_v) + d(v)$.
\end{proof}

\begin{lemma} (Lemma 2.2 in \cite{jart})
\label{ecdestab}
If $G$ is a connected edge-critical triangle-free graph, $v \in V(G)$ and $d(v) \geq 2$ then $N_2(v)$ is a destabiliser of $G_v$.
\end{lemma}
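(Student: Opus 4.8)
The plan is to argue by contradiction and simply chain together the two lemmas just proved. First I would note that every vertex at distance exactly $2$ from $v$ lies in $V(G_v)$ — it is neither $v$ nor a neighbour of $v$ — so $N_2(v) \subseteq V(G_v)$ and it makes sense to ask whether $N_2(v)$ destabilises $G_v$.

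Now suppose, for a contradiction, that $N_2(v)$ does \emph{not} destabilise $G_v$. Since $G$ is triangle-free, Lemma \ref{lemma:alpha} applies to $G$ and $v$ and gives $\alpha(G) \geq \alpha(G_v) + d(v)$. Since $G$ is moreover edge-critical, Lemma \ref{ecalpha} gives $\alpha(G_v) = \alpha(G) - 1$. Combining the two inequalities yields $\alpha(G) \geq \alpha(G) - 1 + d(v)$, hence $d(v) \leq 1$, which contradicts the hypothesis $d(v) \geq 2$. Therefore $N_2(v)$ must be a destabiliser of $G_v$.

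The only point requiring a little care — rather than a genuine obstacle — is the degenerate situation in which $N_2(v)$ (or indeed $G_v$) is empty: then $N_2(v)$ trivially fails to destabilise $G_v$, but the same two inequalities still apply (now with $\alpha(G_v) = 0$) and still force $d(v) \leq 1$, so the contradiction is reached uniformly. I expect no other difficulty, since both ingredients are already available; in particular, note that connectedness of $G$ is not used in this deduction, only edge-criticality and triangle-freeness (it is presumably stated here because $G$ is connected in the applications to come).
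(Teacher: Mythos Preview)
Your proof is correct and follows essentially the same route as the paper: assume $N_2(v)$ does not destabilise $G_v$, invoke Lemma~\ref{lemma:alpha} to get $\alpha(G)\geq \alpha(G_v)+d(v)$, and combine with Lemma~\ref{ecalpha} to reach a contradiction with $d(v)\geq 2$. Your write-up is in fact a bit more careful than the paper's (you explicitly check $N_2(v)\subseteq V(G_v)$, handle the degenerate empty case, and observe that connectedness is not actually used).
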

\begin{proof}
Suppose that $N_2(v)$ did not destabilise $G_v$. By Lemma \ref{lemma:alpha} there is an independent set, of size $\alpha(G_v) + d(v) \geq \alpha(G_v) + 2$, in $H$, contradicting Lemma \ref{ecalpha}. Therefore $N_2(v)$ destabilises $G$.
\end{proof}

We will use the following well-known lemma about the minimum valency of edge-critical graphs.
\begin{lemma} (see e.g. \cite[Prop.~1,~Ch.~13]{bergecommonprop}) \label{ecmindeg}
If $G$ is an edge-critical graph then $\delta(G) \geq 2$ unless $G \iso K_1$ or $G \iso K_2$.
\end{lemma}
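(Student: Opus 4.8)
The plan is to first dispose of the exceptional cases and then argue by contradiction via a swapping trick. The graph $K_1$ is vacuously edge-critical (it has no edges), and $K_2$ is edge-critical since deleting its single edge raises the independence number from $1$ to $2$; both have minimum valency less than $2$, so they genuinely must be excluded. So assume $G$ is edge-critical with $G \not\iso K_1$ and $G \not\iso K_2$, and (as in the rest of the paper) take $G$ connected, so that $n(G) \geq 3$. A connected graph on at least two vertices has no isolated vertex, hence $\delta(G) \geq 1$, and it remains to rule out $\delta(G) = 1$.

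Suppose, for contradiction, that $v$ is a monovalent vertex, with unique neighbour $u$. Since $G$ is connected and $n(G) \geq 3$, the vertex $u$ has a neighbour $w \neq v$. Apply the defining property of edge-criticality to the edge $e = uw$: there is an independent set $I$ of $G - uw$ with $|I| = \alpha(G - uw) \geq \alpha(G) + 1$. This $I$ cannot be independent in $G$, and the only edge of $G$ absent from $G - uw$ is $uw$; hence $\{u,w\} \subseteq I$. Note also that $v \notin I$, since $uv$ is an edge of $G - uw$ and $u \in I$.

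Now set $I' := (I \setminus \{u\}) \cup \{v\}$; I claim $I'$ is independent in $G$. Indeed, $I \setminus \{u\}$ spans no edge of $G$: any such edge would also be absent from $G - uw$ (in which $I$ is independent), so it would have to be $uw$, which is impossible as $u \notin I \setminus \{u\}$. Moreover the only neighbour of $v$ in $G$ is $u$, and $u \notin I'$, so adjoining $v$ preserves independence. Thus $I'$ is an independent set of $G$ with $|I'| = |I| \geq \alpha(G) + 1$, contradicting the definition of $\alpha(G)$. Therefore $\delta(G) \geq 2$.

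There is essentially no deep obstacle here: the heart of the argument is the one-for-one exchange of $u$ for $v$. The only points needing care are the handling of the small graphs $K_1$ and $K_2$ — which are precisely the cases where the auxiliary neighbour $w$ fails to exist — and the verification that the exchanged set $I'$ is independent in $G$ itself rather than merely in $G - uw$.
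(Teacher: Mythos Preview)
Your proof is correct. The paper does not supply its own proof of this lemma, citing instead a standard reference (Berge), so there is nothing to compare against; your swapping argument (replace $u$ by the pendant vertex $v$ in a maximum independent set of $G-uw$) is exactly the classical proof. One small remark: your reduction to the connected case is necessary and correct, but the justification ``as in the rest of the paper'' is slightly off, since the paper does treat disconnected graphs; the real reason is that the lemma as stated (with only $K_1$ and $K_2$ excluded) only makes sense component-wise, and indeed that is how the paper applies it (e.g.\ in Property~\ref{prop-leq6}).
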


An edge $e \in E(G)$ will be called \emph{redundant} in $G$ if $\alpha(G-e) = \alpha(G)$, otherwise $e$ is said to be \emph{critical}. Note that a graph is edge-critical precisely when it has no redundant edges.

%
%
For a set of vertices $W \subseteq V(G)$ we let $N[W]$ denote the \emph{closed neighbourhood} of the vertices in $W$, which is the set of vertices that are either in $W$ or adjacent to a vertex in $W$. If $W = \{w\}$ has size one we omit the set parentheses and write $N[v] = N[\{v\}]$. If $S$ is an independent set of vertices we let $G_S$ denote the graph $G[V(G) \setminus N[S]]$, i.e. the graph obtained by removing all the vertices in $S$, all their neighbours and edges incident to all such vertices. When we have an independent set $S = \{s_1,s_2,\dots,s_k\}$ we will drop the usage of set parentheses in the subscript and write $G_{s_1,s_2,\dots,s_k}$ for $G_{\{s_1,s_2,\dots,s_k\}}$.

\begin{lemma} \label{ecbvconn} (Lemma 2.6 of \cite{jart})
Let $G$ be an edge-critical, connected and triangle-free graph. If $v \in V(G)$ is a bivalent vertex, then $G_v$ is connected.
\end{lemma}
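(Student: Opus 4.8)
The plan is to argue by contradiction. Suppose $G_v$ is disconnected, with connected components $C_1,\dots,C_m$ where $m\ge 2$, and write $N(v)=\{a,b\}$; by triangle-freeness $a\not\sim b$, and $V(G_v)=V(G)\setminus\{v,a,b\}$. First I would observe that, since $G$ is connected and $v$ has no neighbours other than $a$ and $b$, every $C_i$ must contain a vertex adjacent to $a$ or to $b$. Then, applying Lemma~\ref{ecdestab} to $v$ (which is bivalent, hence $d(v)\ge 2$), the set $N_2(v)=(N(a)\cup N(b))\setminus\{v,a,b\}$ destabilises $G_v$; since the independence number is additive over connected components, some component — call it $C_1$ after reindexing — satisfies $\alpha(C_1\setminus N_2(v))<\alpha(C_1)$.

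The core of the argument is to exploit edge-criticality at a carefully chosen edge. Pick a component $C_2\ne C_1$ (possible as $m\ge 2$) and a vertex $x\in V(C_2)$ adjacent to $a$ or to $b$; after possibly interchanging the names $a$ and $b$ (harmless, since $C_1$ was selected using the symmetric set $N_2(v)$) we may assume $xa\in E(G)$. Because $G$ is edge-critical, $\alpha(G-xa)=\alpha(G)+1$, so a maximum independent set $T$ of $G-xa$ has size $\alpha(G)+1$ and, not being independent in $G$, must contain both $x$ and $a$; consequently $v\notin T$ (as $v\sim a$ and the edge $va$ is still present). I would then split into two cases according to whether $b\in T$.

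If $b\notin T$, then $T\subseteq\{a\}\cup V(G_v)$, each $T\cap V(C_i)$ is independent in $C_i$, and Lemma~\ref{ecalpha} gives $|T|\le 1+\sum_i\alpha(C_i)=1+\alpha(G_v)=\alpha(G)$, contradicting $|T|=\alpha(G)+1$. If $b\in T$, then $T$ meets $\{v,a,b\}$ in exactly $\{a,b\}$, so $\sum_i|T\cap V(C_i)|=\alpha(G)-1=\sum_i\alpha(C_i)$; since each summand is at most $\alpha(C_i)$, we get $|T\cap V(C_i)|=\alpha(C_i)$ for every $i$. But $a\in T$ forces $T$ to avoid $N(a)\setminus\{x\}$, and $b\in T$ forces $T$ to avoid $N(b)$, so $T$ avoids $(N(a)\cup N(b))\setminus\{x\}$ (note $x\not\sim b$, for otherwise $T$ would not be independent); hence $T$ avoids $N_2(v)\cap V(C_i)$ whenever $x\notin V(C_i)$, in particular for $i=1$. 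This yields $\alpha(C_1)=|T\cap V(C_1)|\le\alpha(C_1\setminus N_2(v))$, contradicting the choice of $C_1$. Both cases are impossible, so $G_v$ is connected.

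I expect the delicate point to be the case $b\in T$: one must track precisely which vertices $T$ is forced to miss once it is known to contain $a,b,x$ but not $v$, and keep the asymmetric choice of the edge $xa$ compatible with the symmetric choice of $C_1$. Everything else is a counting argument resting on additivity of $\alpha$ over components together with Lemmas~\ref{ecalpha} and~\ref{ecdestab}.
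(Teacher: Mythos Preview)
Your proof is correct and follows essentially the same strategy as the paper's: assume $G_v$ splits, invoke edge-criticality at an edge between $N(v)$ and a component of $G_v$, and analyse the resulting independent set according to whether it contains the other neighbour of $v$. The only organisational difference is that you first use Lemma~\ref{ecdestab} to locate a destabilised component $C_1$ and then place the critical edge in a \emph{different} component $C_2$, whereas the paper places the critical edge in the component it is trying to show is \emph{not} destabilised and finishes by contradicting Lemma~\ref{ecalpha} directly; these are two sides of the same argument.
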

\begin{proof}
Let $\{w_1,w_2\} = N(v)$ and we will write $X$ for $N(w_1) \cup N(w_2) \setminus \{v\}$. Suppose that $G_v = G_1 + G_2$ and set $X_i = X \cap V(G_i)$.

Not both $X_1 \cap N(w_1)$ and $X_2 \cap N(w_2)$ can be empty since $G$ is connected. We may without loss of generality assume that $X_1 \cap N(w_1) \neq \emptyset$.

Suppose that $X_1$ destabilises $G_1$ and let $S$ be a maximum independent set of $G - \{w_1,x\}$ where $x \in X_1 \cap N(w_1)$. Then $w_1 \in S$ and therefore $v \notin S$. If $w_2 \in S$ then $|S \cap V(G_1)| \leq \alpha(G_1) - 1$, and if $w_2 \notin S$ then $|S \cap N[v]| = 1$. Hence, in either case we have
$$|S| = |S \cap V(G_1)| + |S \cap V(G_2)| + |S\cap N[v]| \leq \alpha(G_1) + \alpha(G_2) + 1 = \alpha(G),$$
contradicting that $G$ is edge-critical.

Hence $X_1$ does not destabilise $G_1$ and analogously $X_2$ does not destabilise $G_2$. But then $\alpha(G) \geq \alpha(G_1) + \alpha(G_2) + 2$, contradicting Lemma \ref{ecalpha}.
\end{proof}

We first classify the minimal destabilisers of minimum size in $Ch_k$-graphs.

\begin{lemma}
\label{Gvdestabiliser}
If $G$ is edge-critical, $S \subseteq V(G)$ destabilises $G$ and $v \in V(G)\setminus S$, then $S \cap V(G_v)$ destabilises $G_v$.
\end{lemma}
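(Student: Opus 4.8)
The plan is to argue by contradiction: assume $S \cap V(G_v)$ does \emph{not} destabilise $G_v$, and use this to exhibit an independent set of $G\setminus S$ of size $\alpha(G)$, contradicting the hypothesis that $S$ destabilises $G$.

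First I would unwind the definitions set-theoretically. Since $V(G_v) = V(G)\setminus N[v]$, removing $S\cap V(G_v)$ from $G_v$ gives the induced subgraph $H := G[\,V(G)\setminus (N[v]\cup S)\,]$. By definition, $S\cap V(G_v)$ destabilising $G_v$ means $\alpha(H) < \alpha(G_v)$; so the negation of what we want supplies an independent set $I \subseteq V(H)$ with $|I| \geq \alpha(G_v)$.

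Next I would invoke edge-criticality: by Lemma \ref{ecalpha}, $\alpha(G_v) = \alpha(G) - 1$, hence $|I| \geq \alpha(G) - 1$. Now $I \subseteq V(G)\setminus N[v]$, so no vertex of $I$ is adjacent to $v$ and $v\notin I$, whence $I\cup\{v\}$ is independent in $G$. Moreover $I$ is disjoint from $S$ by construction and $v\notin S$ because $v\in V(G)\setminus S$, so $I\cup\{v\}$ is an independent set of $G\setminus S$ of size at least $\alpha(G)$. Therefore $\alpha(G\setminus S) \geq \alpha(G)$, contradicting that $S$ destabilises $G$. This contradiction shows $S\cap V(G_v)$ destabilises $G_v$.

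There is no real obstacle here; the proof is a short contradiction argument. The only points requiring a little care are the bookkeeping that the single set $I\cup\{v\}$ must simultaneously avoid $N[v]$ (to remain independent after adjoining $v$) and avoid $S$ (to be a valid independent set of $G\setminus S$), and the use of edge-criticality via Lemma \ref{ecalpha} to get the exact equality $\alpha(G_v)=\alpha(G)-1$ rather than merely the trivial inequality $\alpha(G_v)\leq \alpha(G)-1$.
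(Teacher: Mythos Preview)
Your proof is correct and follows essentially the same approach as the paper: both argue by contradiction, take a maximum independent set $T$ of $G_v$ avoiding $S\cap V(G_v)$, apply Lemma~\ref{ecalpha} to get $|T|=\alpha(G)-1$, and observe that $T\cup\{v\}$ is a maximum independent set of $G$ avoiding $S$. Your write-up is just more explicit about the set-theoretic bookkeeping.
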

\begin{proof}
Otherwise there would be a maximum independent set $T$ of $G_v$ avoiding $S \cap V(G_v)$.  $T = \alpha(G_v) = \alpha(G) - 1$, by Lemma \ref{ecalpha}, and therefore $T \cup \{v\}$ would be a maximum independent set of $G$ avoiding $S$. This contradicts that $S$ destabilises $G$.
\end{proof}

\begin{lemma} (Lemma 6.2(b) of \cite{carc})
\label{Chkdestab3}
If $S$ destabilises $G = Ch_k$, where $k \geq 2$, then $|S| \geq 3$ with equality if and only if $S = N[v]$ for some bivalent $v \in V(G)$.
\end{lemma}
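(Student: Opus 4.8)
The plan is to establish, by a single induction on $k\ge 2$, three statements about $Ch_k$: (a) $Ch_k$ is $2$-stable; (b) every $3$-element destabiliser of $Ch_k$ equals $N[v]$ for some bivalent $v$; and (c) no $4$-cycle of $Ch_k$ passes through a bivalent vertex. Property (c) is only auxiliary, but it is needed to close one case below. The converse half of the equivalence needs no induction: every $\nu$-zero graph, in particular $Ch_k$, is edge-critical, so Lemma \ref{ecalpha} gives $\alpha((Ch_k)_v)=\alpha(Ch_k)-1$, and for bivalent $v$ we have $|N[v]|=1+d(v)=3$ and $Ch_k\setminus N[v]=(Ch_k)_v$, so $N[v]$ is indeed a destabiliser of size exactly $3$. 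For the base case $Ch_2=C_5$ one argues by hand: deleting at most two vertices leaves at least three vertices of $C_5$, which always contain a non-adjacent pair, so $\alpha$ is unchanged; a $3$-set $S$ destabilises $C_5$ precisely when the two surviving vertices are adjacent, i.e. $S=V(C_5)\setminus\{y,z\}$ with $yz\in E(C_5)$, which is exactly $N[w]$ for the unique $w$ with $N(w)=\{y,z\}$; finally every vertex of $C_5$ is bivalent and $C_5$ has no $4$-cycle, so (a), (b), (c) hold.

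For the inductive step write $Ch_{k+1}=Ch_k\cupdot\{v,w_1,w_2\}$ as in Definition \ref{defChk}, built at a bivalent pivot $x$ with $N_{Ch_k}(x)=\{a,b\}$; then $v$ and $w_1$ are bivalent in $Ch_{k+1}$, $\alpha(Ch_{k+1})=k+1$, and $(Ch_{k+1})_v=Ch_k$ (no edge among $V(Ch_k)$ is created in the construction). Let $S$ destabilise $Ch_{k+1}$ with $|S|\le 3$. If $v\notin S$, then since $Ch_{k+1}$ is edge-critical, Lemma \ref{Gvdestabiliser} shows $S\cap V(Ch_k)$ destabilises $(Ch_{k+1})_v=Ch_k$, so by inductive $2$-stability $|S|=3$ and $S\subseteq V(Ch_k)$, and by inductive classification $S=N_{Ch_k}[u]$ with $u$ bivalent. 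Two computations rule out $u\in\{x,a,b\}$: if $u=x$, then $Ch_{k+1}\setminus S$ is the disjoint union of the path $w_1vw_2$ with $(Ch_k)_x$, of independence number $2+(k-1)=k+1$, contradicting that $S$ destabilises; if $u=a$ (the case $u=b$ is symmetric), then $x,a$ are adjacent bivalent vertices, and writing $N_{Ch_k}(a)=\{x,c\}$, deletion of the edge $xa$ from the edge-critical $Ch_k$ yields an independent set of size $k+1$ which must contain $x$ and $a$, hence misses both $b$ and $c$; removing $x,a$ from it leaves a maximum independent set of $(Ch_k)_a$ avoiding $b$, and adjoining $w_1,w_2$ (here $w_1$ is adjacent only to $v$ and $w_2$ only to $v$ and $b$) produces an independent set of size $(k-1)+2=k+1$ in $Ch_{k+1}\setminus\{a,x,c\}$, again a contradiction. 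Hence $u\notin\{x,a,b\}$, so $u$ keeps its valency and closed neighbourhood in $Ch_{k+1}$, and $S=N_{Ch_{k+1}}[u]$.

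If instead $v\in S$, one first shows $w_1\in S$: indeed $w_1$ is bivalent in $Ch_{k+1}$ and $(Ch_{k+1})_{w_1}$ is $Ch_k$ with $x$ deleted and a new vertex $w_2$ joined to $a,b$, i.e.\ $(Ch_{k+1})_{w_1}\iso Ch_k$ (rename $x$ to $w_2$); so if $w_1\notin S$, Lemma \ref{Gvdestabiliser} together with inductive $2$-stability forces $S\subseteq V((Ch_{k+1})_{w_1})=(V(Ch_k)\setminus\{x\})\cup\{w_2\}$, which excludes $v$ --- a contradiction. So $S=\{v,w_1\}\cup\{y\}$ for some $y$, and one checks $\{v,w_1\}$ alone does not destabilise (the graph $Ch_{k+1}\setminus\{v,w_1\}$ is $Ch_k$ with a vertex $w_2$ joined to $a,b$, of independence number $k+1$, using $2$-stability of $Ch_k$), so $|S|=3$. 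If $y\notin\{w_2,x\}$ then removing such a $y$ from $Ch_{k+1}\setminus\{v,w_1\}$ still leaves independence number $k+1$ --- by $2$-stability of $Ch_k$ when $y\in\{a,b\}$, and when $y\notin\{a,b\}$ by property (c), which guarantees $\{y,a,b\}$ does not destabilise $Ch_k$ (the only way it could would be $y$ a second bivalent vertex with neighbourhood $\{a,b\}$, forming a $C_4$ through the bivalent vertex $x$). Hence $y\in\{w_2,x\}$, so $S=N_{Ch_{k+1}}[v]$ or $S=N_{Ch_{k+1}}[w_1]$. Finally (c) propagates: each new $4$-cycle is $w_2\,a\,x\,b$, all of whose vertices are at least trivalent in $Ch_{k+1}$; each old $4$-cycle lies inside the copy of $Ch_k$ (no edge among $V(Ch_k)$ is new) and avoids bivalent vertices by induction; and the only vertices bivalent in $Ch_{k+1}$ but not in $Ch_k$ are $v,w_1$, which lie on no old cycle.

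The main obstacle is the inductive step, and within it the two ``negative'' independence-number computations --- that $N_{Ch_k}[x]$ and $N_{Ch_k}[a]$ fail to destabilise $Ch_{k+1}$ --- together with the classification of the small destabilisers of $Ch_{k+1}$ when $v\in S$; these are exactly the points where edge-criticality, Lemma \ref{ecalpha}, and property (c) have to be combined, the remainder being routine bookkeeping with $\alpha$ under vertex deletions.
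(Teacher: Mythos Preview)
Your proof is correct. Both your argument and the paper's proceed by induction on $k$ and both hinge on Lemma~\ref{Gvdestabiliser} to push a destabiliser down to $Ch_{k-1}$, but the organisation is genuinely different. The paper treats $k=2,3$ as base cases and for $k\ge 4$ simply picks \emph{any} bivalent vertex outside $S$ (possible since $|V_2(Ch_k)|=4>|S|$); it never needs to refer to the recursive construction explicitly, nor to track anything about $4$-cycles. You instead keep only $k=2$ as base case and in the inductive step split on whether the specific newly added vertex $v$ lies in $S$, which forces you to analyse the cases $u\in\{x,a,b\}$ one by one and to carry the auxiliary invariant~(c) to close the case $y\notin\{x,a,b,w_2\}$. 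What you gain is a fully self-contained argument that does not rely on counting bivalent vertices or on a separate hand verification of $Ch_3$; what the paper gains is a shorter inductive step with no auxiliary invariant, at the price of an extra base case and the implicit use of $(Ch_k)_w\cong Ch_{k-1}$ for every bivalent $w$, not just the constructed one.
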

\begin{proof}
Induction on $k$. Easily verified for $k = 2,3$. Let $k \geq 4$ and suppose that the statement holds for all $G = Ch_d$, where $d < k$.

Suppose that $|S| \leq 2$. Since $k \geq 4$ there is some bivalent vertex $v \in V(G) \setminus S$ and therefore, by Lemma \ref{Gvdestabiliser}, $S \cap V(G_v)$ would destabilise $G_v \iso Ch_{k-1}$. This, however, contradicts the inductive assumption. Hence $|S| \geq 3$.

Clearly $N[v]$ destabilises $G$ for all bivalent vertices $v \in V(G)$. Hence it only remains to show that if $|S| = 3$ then $S = N[v]$ for some bivalent vertex $v$. Suppose, for a contradiction, that $|S| = 3$ but $S \neq N[v]$ for all bivalent vertices $v \in V(G)$.

We then have that $S \cap N[v] \neq \emptyset$ for all bivalent vertices $v \in V(G)$, since otherwise $S$ would destabilise $G_u \iso Ch_{k-1}$ and therefore by the induction hypothesis $S = N_{G_v}[u]$ for some, in $G_v$, bivalent vertex $u \in V(G_v)$. But $u$ is not bivalent in $G$ since $S \neq N[u]$ and therefore $u$ has distance two from $v$. Let $T$ be an independent set of size $k-2$ in $G_{v,u}$. Then $T \cup N(v)$ is an independent set in $G$ of size $k$ avoiding $S$, contradicting that $S$ destabilises $G$.

Since $|S| = 3$ and there are four bivalent vertices in $G$ there is at least one bivalent vertex that is not in $S$. Let $v \in V(G)$ be such a vertex. Then $S \cap N(v) \neq \emptyset$ by the above. By induction $S \setminus N[v]$ does not destabilise $G_v$, which contradicts Lemma \ref{Gvdestabiliser}.
\end{proof}

For minimal destabilisers of size four we will not completely classify them, but the following lemma tells us that in all but one case they are connected.

\begin{lemma} (Lemma 6.2(e) of \cite{carc})
\label{Chkdestab4}
If $S$ is a minimal destabiliser of $G = Ch_k$ such that $|S| = 4$ and $S$ is not connected in $G$, then $k = 3$ and $S = V_2(Ch_k)$.
\end{lemma}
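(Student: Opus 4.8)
\textbf{Proof plan for Lemma \ref{Chkdestab4}.}

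The plan is to proceed by induction on $k$, mirroring the structure of the proof of Lemma \ref{Chkdestab3}. The base cases $k = 2, 3$ should be checked directly: in $Ch_2 \iso C_5$ there are no destabilisers of size $4$ that are minimal (every $3$-set already destabilises, indeed every $N[v]$ does), and in $Ch_3$ one verifies by inspection that the only disconnected minimal destabiliser of size $4$ is the set $V_2(Ch_3)$ of the four bivalent vertices. (Here one uses that $Ch_3$ has exactly four bivalent vertices, arranged so that they form an independent set, and that $G_S$ for $S = V_2(Ch_3)$ has independence number strictly smaller, while no proper subset destabilises; this is a small finite check.) Then fix $k \geq 4$, assume the statement for all smaller indices, and let $S$ be a disconnected minimal destabiliser of $Ch_k$ with $|S| = 4$; the goal is to derive a contradiction.

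The main tool will again be Lemma \ref{Gvdestabiliser} together with the recursive structure: for a bivalent vertex $v \notin S$, the set $S \cap V(G_v)$ destabilises $G_v \iso Ch_{k-1}$. First I would argue, exactly as in the proof of Lemma \ref{Chkdestab3}, that $S \cap N[v] \neq \emptyset$ for every bivalent $v \in V(G)$: otherwise $S \subseteq V(G_v)$ destabilises $G_{v'}\iso Ch_{k-1}$ for an appropriate bivalent $v'$, forcing (by Lemma \ref{Chkdestab3} and the size bound) $|S| = 3$ or $|S| = 4$ with $S$ of a restricted shape in $Ch_{k-1}$, and one then builds an independent set of size $k$ in $Ch_k$ avoiding $S$ from an independent set of size $k-2$ in $G_{v,\cdot}$, contradicting that $S$ destabilises. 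Since $Ch_k$ has (for $k \geq 4$) at least four bivalent vertices and $|S| = 4$, either $S = V_2(Ch_k)$ or there is a bivalent vertex $v \notin S$. In the latter case $S \cap N[v] \neq \emptyset$ while $S \neq N[v]$, so $S$ meets $N(v) = \{w_1, w_2\}$ but $v \notin S$; removing $v$ and passing to $G_v \iso Ch_{k-1}$, the set $S' = S \cap V(G_v)$ has size $3$ or $4$ and destabilises $G_v$. If $|S'| = 4$ then $S = S'$ is a disconnected minimal destabiliser of $Ch_{k-1}$ — here one must check minimality is inherited, which follows since a destabilising proper subset of $S$ in $Ch_{k-1}$ would, by Lemma \ref{Gvdestabiliser} run in reverse (or directly), give a destabilising proper subset in $Ch_k$ — so by induction $k - 1 = 3$, i.e. $k = 4$, and $S = V_2(Ch_3)$ sits inside $Ch_4$; one then checks by hand that this particular $4$-set is not a destabiliser of $Ch_4$ (its complement in $Ch_4$ still contains a maximum independent set), a contradiction. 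If $|S'| = 3$, then by Lemma \ref{Chkdestab3} $S' = N_{G_v}[u]$ for some vertex $u$ bivalent in $G_v$; since $|S| = 4$ the extra vertex of $S$ lies in $N[v]$, and because $S$ is disconnected and $S' = N_{G_v}[u]$ is connected in $G_v$, this extra vertex must be isolated from $S'$ in $G[S]$, which pins down its location and the location of $u$ relative to $v$; a short case analysis on the three bivalent vertices of $Ch_k$ adjacent-or-not to the gadget added in the recursive step then yields either that $S$ fails to destabilise or that $S$ is in fact connected, both contradictions.

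The step I expect to be the main obstacle is the case $|S'| = 3$: unlike in Lemma \ref{Chkdestab3}, where a $3$-set destabiliser is rigidly $N[u]$, here we have a fourth vertex to place, and ruling out every disconnected configuration requires understanding precisely how a bivalent vertex $u$ of $G_v = Ch_{k-1}$ sits inside $Ch_k$ — in particular whether $u$ is still bivalent in $Ch_k$ or has become trivalent through the attachment at $v$'s former neighbour. I would handle this by recalling from Definition \ref{defChk} exactly which vertices of $Ch_{k-1}$ gain an edge when passing to $Ch_k$, enumerating the (few) bivalent vertices of $Ch_k$ and their closed neighbourhoods, and checking the disconnectedness constraint against each; since $Ch_k$ has a very restricted supply of bivalent vertices this is a bounded, if slightly tedious, verification. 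A secondary subtlety is being careful with minimality when passing between $Ch_k$ and $Ch_{k-1}$: I would state once and for all that if $T \subsetneq S$ destabilises $Ch_k$ then, choosing a bivalent $v \notin S$ (available since $k \geq 4$), $T \cap V(G_v) \subseteq S \cap V(G_v) = S'$ destabilises $G_v$ by Lemma \ref{Gvdestabiliser}, so minimality of $S$ transfers to minimality of (a subset containing) $S'$ where needed, closing the induction.
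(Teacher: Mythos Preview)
Your plan is in the right spirit (induction on $k$, pass to $G_v \iso Ch_{k-1}$ via Lemma~\ref{Gvdestabiliser}, invoke Lemma~\ref{Chkdestab3}), but several steps do not work as stated.

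First, a factual slip: the four bivalent vertices of $Ch_3$ do \emph{not} form an independent set; they are two disjoint edges (so $G[V_2(Ch_3)] \iso 2K_2$), as the paper notes explicitly. This matters for the base case and for the shape of $V_2(Ch_k)$ throughout.

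Second, your opening claim ``$S \cap N[v] \neq \emptyset$ for every bivalent $v$'' is the wrong direction, and your justification for it is essentially a forward reference to the rest of the proof. The paper in fact proves the \emph{opposite}: after choosing a bivalent $v \notin S$ (which requires first showing that $V_2(Ch_k)$ itself does not destabilise $Ch_k$ for $k \ge 4$, a step you omit), one shows $S \cap N[v] = \emptyset$. The argument is that if $|S \cap N(v)| = 1$ then $S \setminus N[v]$ is a size-$3$ destabiliser of $G_v$, hence equals $N_{G_v}[u]$; minimality of $S$ in $G$ forces $u$ to be trivalent in $G$, i.e.\ adjacent to $N(v)$, and then $S = N_{G_v}[u] \cup \{w\}$ with $w \in N(v)$ is connected --- contradiction. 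Your plan never isolates this connectivity argument, which is the crux.

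Third, the minimality transfer you rely on (``Lemma~\ref{Gvdestabiliser} run in reverse'') is false in general. If $T \subsetneq S$ with $|T| = 3$ destabilises $G_v$, then $T = N_{G_v}[u]$ for some $u \in V_2(G_v)$; but if $u$ is trivalent in $G$ (which happens exactly when $u$ is adjacent to $N(v)$), then $T$ is \emph{not} of the form $N_G[w]$ for any $w \in V_2(G)$, so by Lemma~\ref{Chkdestab3} $T$ does \emph{not} destabilise $G$. Thus $S$ can be minimal in $G$ yet fail to be minimal in $G_v$, and your induction in the $|S'| = 4$ case does not apply directly. The paper handles this by separately excluding $S \supseteq N_{G_v}[u]$ for each $u \in V_2(G_v)$ (building an explicit independent set $I \cup N(v)$ of size $k$ avoiding $S$), and only then invoking induction.

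In short: you are missing (i) the verification that $V_2(Ch_k)$ is not a destabiliser for $k \ge 4$, (ii) the connectivity argument that forces $S \cap N[v] = \emptyset$, and (iii) a correct treatment of the case where $S$ loses minimality upon passing to $G_v$. These are exactly the three substantive paragraphs of the paper's proof.
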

\begin{proof}
It is easy to verify, by checking all possibilities, that the statement holds for $G = Ch_2$ and $G = Ch_3$.

Suppose that $k \geq 4$. Let $S$ be a disconnected destabiliser of $G = Ch_k$ of size four and suppose that $S \not\supseteq N[u]$ for all bivalent vertices $u \in V_2(G)$.

Suppose $V_2(G) = \{s_1,s_2,s_3,s_4\}$, where $s_1s_2,s_3s_4 \in E(G)$. Let $u$ and $v$ denote the neighbours of $s_1$ and $s_2$ that are not in $V_2(G)$, respectively. Then $G_{s_1,v} \iso Ch_{k-2}$ and there is an independent set $I$ of size $k-2$ avoiding $\{s_3,s_4\}$, by Lemma \ref{Chkdestab4}. Then $I \cup \{u,v\}$ is an independent set of size $k$ in $G$ avoiding $V_2(G)$. Hence $V_2(G)$ does not destabilise $G$. Therefore there is some bivalent vertex, $v \in V_2(G)$, such that $v \notin S$. Fix such a vertex $v \in V_2(G)$.

By Lemmas \ref{Gvdestabiliser} and \ref{Chkdestab3} we have that $|S \cap V(G_v)| \geq 3$ and therefore $|S \cap N[v]| = |S|-|S\cap V(G_v)| \leq 4-3 = 1$. In fact, $S \cap N[v] = \emptyset$ since otherwise $|S \cap N[v]| = 1$ and therefore $|S \setminus N[v]| = 3$, where $S \setminus N[v]$ destabilises $G_v$ (otherwise add $v$ to an maximum independent set of $G_v$). Hence $S \setminus N[v] = N_{G_v}[u]$ for some $u \in V_2(G_v)$ by Lemma \ref{Chkdestab4}. But $S \not\supseteq N[u]$ for all $u \in V_2(G)$ so $u$ must be trivalent in $G$. Both neighbours of $v$ in $G$ are adjacent to $N_{G_v}[u]$ and $S \cap N(v) \neq \emptyset$, thus $S$ would be connected.

$S \not\supseteq N_{G_v}[u]$ for all $u \in V_2(G_v)$ since otherwise $u \in V_2(G_v)$ but $u \notin V_2(G)$ because $S$ is minimal so $|S \setminus N[u,v]| \leq 1$ and therefore there would be some independent set $I$ in $G_{v,u}$ of size $k-2$ avoiding $S$. Because $S \cap N[v] = \emptyset$ we would get that $I \cup N(v)$ would be independent in $G$ of size $k$, contradicting that $S$ destabilises $G$.

We will now use these facts to prove, by induction, that every disconnected destabiliser of size four in $G=Ch_k$, $k \geq 4$ contains $N[u]$ for some $u \in V_2(G)$. For $k = 4$ let $v \in V_2(G)$ be such that $v \notin S$ as in the above. Then since $S \cap N[v] = \emptyset$ and $\forall u \in V_2(G_v): S \not\supseteq N_{G_v}[u]$ we must have that $S = V_2(G_v)$. However, then there is an independent set, $I$, of size $2$ in $G_{v,w} \iso Ch_{2} \iso C_5$ avoiding $S$, where $w \in V_2(G_v)$, since $|S \cap V(G_{v,w})| = 2$ and therefore $S \cap V(G_{v,w})$ does not destabilise $G_{v,w}$. Then $I \cup N(v)$ would be a maximum independent set in $G$ avoiding $S$, contradicting that $S$ is a destabiliser.

Let $k \geq 5$ and suppose that every disconnected destabiliser of size four in $Ch_{\ell}$, $4 \leq \ell < k$ contains $N_{Ch_{\ell}}[u]$ for some $u \in V_2(Ch_{\ell})$. Suppose also that $S$ is a disconnected destabiliser of size four in $G = Ch_k$. Take $v \in V_2(G)$ such that $v \notin S$ as before. Then $S \cap N[v] = \emptyset$ and $\forall u \in V_2(G_v): S \not\supseteq N_{G_v}[u]$, thus we get that $S \supseteq N_{G_v}[u] = N[u]$ for some vertex $u \in V_2(G_v) \cap V_2(G)$.
\end{proof}

Many of the graphs we will study are such that if we remove a vertex $v$ and all its neighbours from $G$, then we get a $Ch_k$-graph for some $k \geq 2$. The following lemma tells us that if $v$ is bivalent with second valency six then the local neighbourhood of $v$ in $G$ has a certain structure.

\begin{lemma}
\label{lemma:starone}
Let $G$ be such that $G_v = Ch_k$ for some $k \geq 2$ and some $v \in V(G)$ such that $d(v) = 2$ and $d^2(v) = 6$. If $N_2(v)$ destabilises $G_v$ then either $N_2(v)$ contains a pair of adjacent bivalent vertices of $G_v$ or $\N(C_4; G, N(v)) \geq 2$.
\end{lemma}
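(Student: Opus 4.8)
The plan is to first pin down the size and local structure of $N_2(v)$ and then split into cases according to Lemmas \ref{Chkdestab3} and \ref{Chkdestab4}. Write $N(v)=\{w_1,w_2\}$, so that $d(w_1)+d(w_2)=d^2(v)=6$, and let $c$ be the number of common neighbours of $w_1$ and $w_2$ other than $v$. Since $G$ is triangle-free we have $w_1\not\sim w_2$, and $N_2(v)$ consists precisely of the neighbours of $w_1$ or of $w_2$ other than $v$; thus $N_2(v)\subseteq V(G_v)$, $|N(w_1)\cup N(w_2)|=6-(c+1)$, and hence $|N_2(v)|=|N(w_1)\cup N(w_2)|-1=4-c$. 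If $N_2(v)$ destabilises $G_v=Ch_k$ then $|N_2(v)|\ge 3$ by Lemma \ref{Chkdestab3}, so $c\in\{0,1\}$. The case $c=1$ can be excluded: then $|N_2(v)|=3$, so Lemma \ref{Chkdestab3} gives $N_2(v)=N_{G_v}[u]=\{u\}\cup N_{G_v}(u)$ for some vertex $u$ bivalent in $G_v$; using that $u$ is adjacent in $G$ to both vertices of $N_{G_v}(u)$, that every vertex of $N_2(v)$ is adjacent to $w_1$ or to $w_2$, and that $G$ is triangle-free, one checks that no vertex of $N_2(v)$ is adjacent to both $w_1$ and $w_2$, contradicting $c=1$. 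Hence $c=0$, so $|N_2(v)|=4$, and $N_2(v)$ is the disjoint union $I_1\cupdot I_2$ with $I_j=N(w_j)\cap N_2(v)$ and $|I_1|+|I_2|=4$.

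Now choose a minimal destabiliser $S\subseteq N_2(v)$ of $G_v$; by Lemma \ref{Chkdestab3} we have $3\le|S|\le4$. If $|S|=3$ then $S=N_{G_v}[u]$ for some $u$ bivalent in $G_v=Ch_k$, and since every bivalent vertex of $Ch_k$ has a bivalent neighbour (an easy induction on the construction of $Ch_k$), $S$, and hence $N_2(v)$, contains two adjacent bivalent vertices of $G_v$ and the first alternative of the lemma holds. If instead $|S|=4$ then $S=N_2(v)$ is a minimal destabiliser of size $4$, and Lemma \ref{Chkdestab4} leaves two possibilities: either $k=3$ and $N_2(v)=V_2(Ch_3)$, which again contains two adjacent bivalent vertices of $G_v$, or $G_v[N_2(v)]$ is connected.

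It remains to handle the case where $G_v[N_2(v)]$ is connected, which is the core of the argument; here the goal is $\N(C_4;G,N(v))\ge2$. Since $G$ is triangle-free, no edge of $G_v[N_2(v)]$ has both endpoints in one class $I_j$ (such an edge would form a triangle with $w_j$), so $\{I_1,I_2\}$ is a proper $2$-colouring of the connected triangle-free graph $G_v[N_2(v)]$ on four vertices; therefore $G_v[N_2(v)]\in\{P_4,C_4,K_{1,3}\}$ and both colour classes are non-empty. If $G_v[N_2(v)]=K_{1,3}$ with centre $z$, then one class, say $I_1$, equals $\{z\}$ and $I_2=\{y_1,y_2,y_3\}$, and $w_2y_izy_jw_2$ for $1\le i<j\le3$ are three distinct $4$-cycles of $G$ through $w_2\in N(v)$. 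If $G_v[N_2(v)]$ is $P_4$ or $C_4$, then $|I_1|=|I_2|=2$; writing it as $p_1p_2p_3p_4$ with $I_1=\{p_1,p_3\}$ and $I_2=\{p_2,p_4\}$, the cycles $w_1p_1p_2p_3w_1$ and $w_2p_2p_3p_4w_2$ in the $P_4$ case, respectively $w_1p_1p_2p_3w_1$ and $w_1p_1p_4p_3w_1$ in the $C_4$ case, are two distinct $4$-cycles of $G$ each passing through a vertex of $N(v)$. In every case $\N(C_4;G,N(v))\ge2$.

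I expect the main obstacle to be the connected case: one must check carefully that the exhibited $4$-cycles are genuinely distinct and genuinely pass through $w_1$ or $w_2$ (triangle-freeness is what guarantees that a claimed $C_4$ is not spoiled by a chord). By contrast, the reduction to $c=0$ and the small fact that every bivalent vertex of $Ch_k$ has a bivalent neighbour are routine.
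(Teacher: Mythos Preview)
Your proof is correct and follows essentially the same route as the paper's: reduce via Lemmas~\ref{Chkdestab3} and~\ref{Chkdestab4} to the trichotomy $N_2(v)\supseteq N_{G_v}[u]$, $N_2(v)=V_2(Ch_3)$, or $N_2(v)$ connected of size four, and in the last case observe that the induced subgraph must be $P_4$, $C_4$ or $K_{1,3}$. Your extra step excluding $c=1$ is correct but not needed (that case already falls under $N_2(v)=N_{G_v}[u]$, giving the adjacent bivalent pair directly), and your explicit construction of the two $4$-cycles via the bipartition $\{I_1,I_2\}$ simply spells out what the paper leaves as ``easily checked''.
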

\begin{proof}
Suppose $N_2(v)$ destabilises $G_v = Ch_k$. By Lemmas \ref{Chkdestab3} and \ref{Chkdestab4} in addition to the fact $|N_2(v)| \leq 4$ we have that at least one of the following three statements must hold:
\begin{enumerate}[(i)]
\item $N_2(v) \supseteq N_{G_v}[u]$ for some $u \in V_2(G_v)$, or
\item $k = 3$ and $N_2(v) = V_2(G_v)$, or
\item $|N_2(v)| = 4$ and $N_2(v)$ is connected.
\end{enumerate}
In cases (i) and (ii) we get that $N_2(v)$ does contain a pair of adjacent bivalent vertices of $G_v$. In case (iii) it is easily checked that $\N(C_4;G,N(v)) \geq 2$ since the only three connected triangle-free graphs on four vertices (that $N_2(v)$ possibly induce in $G_v$) are $P_4, C_4$ and $K_{1,3}$.
\end{proof}

\begin{lemma} (Lemma 2.10(b) in \cite{carc})
If $e \in E(G)$ is redundant in $G$ and $x \in V(G) \setminus N[e]$, then either
\begin{enumerate}[(i)]
  \item $e$ is redundant in $G_x$, too, or
  \item $\alpha(G_x) \leq \alpha(G) - 2$.
\end{enumerate}
\end{lemma}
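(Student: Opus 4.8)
The plan is to argue by contradiction, supposing that $e$ is \emph{not} redundant in $G_x$ (so that $e$ is critical in $G_x$, meaning $\alpha(G_x - e) > \alpha(G_x)$) while also $\alpha(G_x) \geq \alpha(G) - 1$, and derive a contradiction with the fact that $e$ is redundant in $G$ (i.e.\ $\alpha(G-e) = \alpha(G)$). First I would fix notation: write $e = \{u,w\}$ with $N[e] = \{u,w\} \cup N(u) \cup N(w)$, and recall that removing $x$ together with $N(x)$ yields $G_x = G[V(G)\setminus N[x]]$; the hypothesis $x \notin N[e]$ guarantees that $x$ is not incident to $e$ and that neither endpoint of $e$ lies in $N(x)$, so $e$ is genuinely an edge of $G_x$ and deleting it makes sense.

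The key step is to take a maximum independent set $I$ of $G_x - e$; by the assumption that $e$ is critical in $G_x$ we have $|I| = \alpha(G_x) + 1$, and moreover $I$ must contain both $u$ and $w$ (otherwise $I$ would already be independent in $G_x$, contradicting maximality there). Now I want to lift $I$ to an independent set of $G - e$. Since $x \notin N[e]$, the vertex $x$ is non-adjacent to $u$ and to $w$; the question is whether $I \cup \{x\}$ is independent in $G - e$. Every vertex of $I$ lies in $V(G)\setminus N[x]$, hence is non-adjacent to $x$, so indeed $I \cup \{x\}$ is an independent set of $G - e$ (the only edge of $G$ not present in $G-e$ is $e$ itself, which does not involve $x$, and $I$ was independent in $G-e$ by construction). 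Therefore
$$\alpha(G-e) \geq |I| + 1 = \alpha(G_x) + 2 \geq \big(\alpha(G) - 1\big) + 2 = \alpha(G) + 1 > \alpha(G),$$
contradicting that $e$ is redundant in $G$. Hence either $e$ is redundant in $G_x$ or $\alpha(G_x) \leq \alpha(G) - 2$, which is exactly the dichotomy (i)–(ii).

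The main obstacle — really the only subtle point — is making sure the lift $I \mapsto I \cup \{x\}$ is legitimate, i.e.\ that adding $x$ does not create an edge; this is where the hypothesis $x \notin N[e]$ and the precise definition of $G_x$ (removing \emph{all} of $N[x]$, so every surviving vertex is non-adjacent to $x$) do the work. A secondary point to double-check is that $|I| = \alpha(G_x)+1$ exactly, rather than merely $|I| > \alpha(G_x)$: this follows because deleting a single edge from a graph can raise the independence number by at most one. One should also note the degenerate cases where $G_x$ is empty or has no edges; if $e \notin E(G_x)$ the statement that "$e$ is redundant in $G_x$" is vacuous or can be read as trivially true, and in any case the relevant alternative in the lemma is then satisfied, so no separate treatment is needed beyond a remark. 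I would present the argument in the contrapositive form above, as it is cleaner than juggling both conclusions simultaneously.
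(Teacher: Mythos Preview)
Your proof is correct and follows essentially the same approach as the paper: assume both alternatives fail, take a maximum independent set $I$ of $G_x - e$, and observe that $I \cup \{x\}$ is independent in $G - e$ of size at least $\alpha(G)+1$, contradicting redundancy of $e$. The paper phrases this slightly differently (assuming the chain $\alpha(G_x - e) = \alpha(G_x)+1 = \alpha(G)$ and negating), but the key lifting step $I \mapsto I \cup \{x\}$ is identical; your discussion of the degenerate case $e \notin E(G_x)$ is unnecessary since, as you yourself noted earlier, $x \notin N[e]$ already forces $e \in E(G_x)$.
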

\begin{proof}
Suppose that $\alpha(G_x - e) = \alpha(G_x) + 1 = \alpha(G)$. Then if $I$ is a maximum independent set in $G_x - e$ we get that $I \cup \{x\}$ is an independent set of size $\alpha(G) + 1$ in $G-e$, contradicting that $e$ is redundant.

Hence, either $\alpha(G_x - e) \neq \alpha(G_x) + 1$, in which case $e$ is redundant in $G_x$ as well, or $\alpha(G_x) + 1 \neq \alpha(G)$, in which case $\alpha(G_x) \leq \alpha(G) - 2$.
\end{proof}

For subsets of vertices $A,B \subseteq V(G)$ we write $E_G(A,B)$ for the set of edges with one endpoint in $A$ and the other endpoint in $B$, i.e. $E_G(A,B) = E(G) \cap \{\{a,b\}; a \in A, b \in B\}$. The cardinality of this set will be denoted by $e_G(A,B)$. If the graph $G$ is clear from context we will drop the subscript from the notation. We will also sometimes abuse the notation by writing $E_G(H_1,H_2)$ for $E_G(V(H_1),V(H_2))$ where $H_1$ and $H_2$ are two subgraphs of $G$.

If $S$ is a collection of sets then we let $\bigcup S$ denote the union of all the sets in $S$, i.e. $\bigcup S = \bigcup_{X \in S} X$. In particular, if $H$ is an induced subgraph of $G$ then $\bigcup \{e \cap V(H); e \in E(H,G \setminus H)\}$ is the set of vertices of $H$ that are adjacent, in $G$, to some vertex outside $H$.

\begin{lemma} \label{lemma:A}
If $H$ is an induced subgraph of $G$ and $M = \bigcup \{e \cap V(H); e \in E(H, G \setminus H)\}$ does not destabilise $H$. Then every edge in $E(H,G\setminus H)$ is redundant.
\end{lemma}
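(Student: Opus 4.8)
The plan is to argue by contradiction and exploit the defining property of $M$. Suppose some edge $e = \{a,b\} \in E(H, G\setminus H)$, with $a \in V(H)$ and $b \in V(G)\setminus V(H)$, is critical, so that $\alpha(G-e) > \alpha(G)$; fix an independent set $I$ of $G-e$ with $|I| > \alpha(G)$. Since $|I| > \alpha(G)$, the set $I$ is not independent in $G$, and the only edge of $G$ absent from $G-e$ is $e$, so both endpoints of $e$ lie in $I$; in particular $a \in I \cap V(H)$.

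Next I would split $I$ along $V(H)$. Because $e$ has exactly one endpoint in $V(H)$, it lies neither in $G[V(H)]$ nor in $G[V(G)\setminus V(H)]$; hence $(G-e)[V(H)] = H$, so $I \cap V(H)$ is an independent set of $H$ with $|I \cap V(H)| \le \alpha(H)$, and $I \setminus V(H)$ is independent in $G$ itself. Now use the hypothesis: since $M$ does not destabilise $H$, fix a maximum independent set $J$ of $H$ with $J \cap M = \emptyset$, so $|J| = \alpha(H)$. I claim $J \cup (I \setminus V(H))$ is independent in $G$: both pieces are independent in $G$ by the above, and any edge of $G$ joining $J \subseteq V(H)$ to $I \setminus V(H) \subseteq V(G)\setminus V(H)$ would belong to $E(H,G\setminus H)$ and therefore have its $H$-endpoint in $M$ by the definition of $M$, which is impossible as $J$ avoids $M$. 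As $J$ and $I\setminus V(H)$ are disjoint, $|J \cup (I\setminus V(H))| = \alpha(H) + |I\setminus V(H)| \ge |I \cap V(H)| + |I \setminus V(H)| = |I| > \alpha(G)$, contradicting the definition of $\alpha(G)$. Hence every edge of $E(H,G\setminus H)$ is redundant.

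There is essentially no deep obstacle here; the only point requiring care is the bookkeeping of which edges survive in which induced subgraph, so that $I \cap V(H)$ is a legitimate independent set of $H$ and $I \setminus V(H)$ is legitimately independent in $G$. Both of these, and the crucial step that edges between $J$ and $I \setminus V(H)$ are forbidden, rest on the single observation that the deleted edge $e$ has exactly one endpoint in $V(H)$ and that every $H$-vertex with a neighbour outside $H$ lies in $M$.
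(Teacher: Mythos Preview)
Your proof is correct and follows essentially the same route as the paper's: both argue by contradiction, replace the $V(H)$-part of a maximum independent set of $G-e$ by a maximum independent set of $H$ avoiding $M$, and observe that the resulting set is independent in $G$ of size exceeding $\alpha(G)$. Your version is slightly more explicit in checking that the replacement set is independent already in $G$ (rather than first in $G-e$), but the argument is the same.
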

\begin{proof}
Suppose that $e \in E(H, G\setminus H)$ were not redundant, then $\alpha(G-e) = \alpha(G) + 1$. Let $S$ be a maximum independent set of $G-e$. $S' = S \cap V(H)$ is independent in $H$. Since $M$ does not destabilise $H$ there is a maximum independent set $S''$ of $H$ such that $S'' \cap M = \emptyset$. It follows that $(S \setminus S') \cup S''$ is independent, in $G - e$, of size at least $\alpha(G) + 1$. But since $(S \setminus S') \cup S''$ avoids $e \cap V(H)$ the set $(S \setminus S') \cup S''$ would also be independent in $G$, a contradiction.
\end{proof}

As an immediate consequence of this lemma we have the following corollary.

\begin{corollary} \label{corollary:A}
If $H$ is an induced subgraph of $G$, $H$ is $r$-stable and $e(H,G\setminus H) \leq r$ then all edges in $E(H,G\setminus H)$ are redundant.
\end{corollary}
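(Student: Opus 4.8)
The plan is to deduce this directly from Lemma \ref{lemma:A} together with the definition of $r$-stability, so the only real content is a cardinality estimate. Set $M = \bigcup \{e \cap V(H); e \in E(H, G\setminus H)\}$, the set of vertices of $H$ that have a neighbour outside $H$. Each edge $e \in E(H, G\setminus H)$ has exactly one endpoint in $V(H)$, so $e \cap V(H)$ is a singleton, and hence $M$ is the image of $E(H, G\setminus H)$ under the map $e \mapsto$ (its endpoint in $H$). Therefore $|M| \leq e(H, G\setminus H) \leq r$.

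Next I would invoke the hypothesis that $H$ is $r$-stable: by definition $H$ has no destabiliser of size $r$ or less, so in particular the set $M$, having $|M| \leq r$, does not destabilise $H$. This is exactly the hypothesis needed to apply Lemma \ref{lemma:A} to the induced subgraph $H$ of $G$, which immediately yields that every edge in $E(H, G\setminus H)$ is redundant in $G$, as desired.

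I do not expect any genuine obstacle here; the statement is an immediate corollary, and the proof is essentially the one-line observation $|M| \leq e(H, G\setminus H)$ followed by a citation of Lemma \ref{lemma:A}. The only point requiring (trivial) care is noting that distinct edges of $E(H, G\setminus H)$ may share their $H$-endpoint, which is why one gets the inequality $|M| \leq e(H, G\setminus H)$ rather than equality — but an inequality is all that is needed.
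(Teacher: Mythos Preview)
Your proposal is correct and matches the paper's intended argument exactly; the paper in fact gives no separate proof, calling the corollary ``an immediate consequence'' of Lemma~\ref{lemma:A}, and your bound $|M| \leq e(H,G\setminus H) \leq r$ together with the definition of $r$-stability is precisely the one-line justification the paper has in mind.
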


In addition to Lemmas \ref{Chkdestab3} and \ref{Chkdestab4} which give us a description of the small minimal destabilisers in $Ch_k$-graphs we will also encounter situations where we have graphs such as $Ch_k$ with an extra edge added between two (in $Ch_k$) bivalent vertices. We will want to say something about the destabilisers of such a graph as well. In particular we will make use of the following lemma.

\begin{lemma} \label{lemma:Chkplusedge}
Let $G = Ch_k + e$ where $e = \{\alpha,\beta\} \notin E(Ch_k)$, $k \geq 3$. Moreover suppose that both $\alpha$ and $\beta$ are bivalent in $Ch_k$. Let $S \subseteq V(G)$, $|S| = 3$, be such that $S$ contains at least two of the bivalent vertices of $Ch_k$ and if $|S \cap V_2(Ch_k)| = 2$, then $S \cap V_2(Ch_k)$ is an independent set. Then $G$ is not destabilised by $S$.
\end{lemma}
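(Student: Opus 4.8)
The plan is to exhibit a maximum independent set of $G$ that is disjoint from $S$. First note that $\alpha(G)=\alpha(Ch_k)=k$: adding the edge $e$ cannot raise the independence number, and by Lemma~\ref{Chkdestab3} the singleton $\{\alpha\}$ does not destabilise $Ch_k$ (destabilisers of $Ch_k$ have at least three vertices), so $Ch_k$ has a maximum independent set avoiding $\alpha$, which, not using $e$, is still independent in $G$. Hence a maximum independent set of $G$ is precisely a size-$k$ independent set $I$ of $Ch_k$ with $\{\alpha,\beta\}\not\subseteq I$, and it suffices to produce such an $I$ with $I\cap S=\emptyset$.

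Before the case analysis I would record some facts about $Ch_k$ for $k\geq 3$, all provable by a straightforward induction from Definition~\ref{defChk}: $Ch_k$ has exactly four bivalent vertices, forming two edges; since $e\notin E(Ch_k)$ its endpoints $\alpha,\beta$ are non-adjacent and so lie in different pairs, which I write as $\{\alpha,\alpha'\}$ and $\{\beta,\beta'\}$; every bivalent vertex has exactly one bivalent neighbour, its partner, the other neighbour being trivalent — write $a,c,b,d$ for the trivalent neighbours of $\alpha,\alpha',\beta,\beta'$; and $a,b,c,d$ are pairwise distinct, since $a\neq c$ and $b\neq d$ by triangle-freeness and the two bivalent pairs have disjoint neighbourhoods, so $\{a,c\}\cap\{b,d\}=\emptyset$.

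Next I claim $S$ does not destabilise $Ch_k$. Otherwise, by Lemma~\ref{Chkdestab3}, $S=N_{Ch_k}[w]$ for some bivalent $w$; but then $S$ meets $V_2(Ch_k)$ in exactly the adjacent pair $\{w,w'\}$, which is impossible under the hypothesis on $S$ (that pair must be independent when $|S\cap V_2(Ch_k)|=2$, and $N_{Ch_k}[w]$ contains only two bivalent vertices, so $|S\cap V_2(Ch_k)|=3$ is excluded too). So $Ch_k$ has a maximum independent set $I$ avoiding $S$; if $\{\alpha,\beta\}\not\subseteq I$ we are done. Otherwise $\alpha,\beta\notin S$, which forces the two bivalent vertices that $S$ must contain to be $\alpha'$ and $\beta'$, so $S=\{\alpha',\beta',u\}$ with $u\notin V_2(Ch_k)=\{\alpha,\alpha',\beta,\beta'\}$. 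It is then enough to show that $S\cup\{\alpha\}$ or $S\cup\{\beta\}$ does not destabilise $Ch_k$, because a maximum independent set of $Ch_k$ avoiding such a set avoids $S$ and omits one of $\alpha,\beta$.

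Suppose, for contradiction, that both $S\cup\{\alpha\}$ and $S\cup\{\beta\}$ destabilise $Ch_k$; each has size $4$. The induced graph $Ch_k[\{\alpha,\alpha',\beta',u\}]$ is disconnected, since $\beta'$ is non-adjacent to $\alpha$ and $\alpha'$ (and if $u=d$ then $u\notin\{a,c\}$, so $\{\beta',u\}$ is a separate component from $\{\alpha,\alpha'\}$), and it is not $V_2(Ch_k)$ because $u\neq\beta$; so by Lemma~\ref{Chkdestab4} the set $S\cup\{\alpha\}$ is not a minimal destabiliser. Hence it contains a destabiliser of size $3$, which by Lemma~\ref{Chkdestab3} is some $N_{Ch_k}[w]$; since $\{\alpha,\alpha'\}$ is the only adjacent pair of bivalent vertices inside $S\cup\{\alpha\}$, this $N_{Ch_k}[w]$ is $N_{Ch_k}[\alpha]=\{\alpha,\alpha',a\}$ or $N_{Ch_k}[\alpha']=\{\alpha',\alpha,c\}$, forcing $u\in\{a,c\}$. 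The symmetric argument applied to $S\cup\{\beta\}$ gives $u\in\{b,d\}$, contradicting $\{a,c\}\cap\{b,d\}=\emptyset$. Therefore one of $S\cup\{\alpha\}$, $S\cup\{\beta\}$ does not destabilise $Ch_k$, which completes the proof. The main obstacle is this final step — identifying, when $S$ forces both endpoints of $e$ into every maximum independent set avoiding it, exactly which vertex $u$ can be, and then ruling it out via the structure of the $3$- and $4$-element (minimal) destabilisers of $Ch_k$ together with the disjointness of the two bivalent neighbourhoods; setting up and verifying the structural facts about $Ch_k$ is the other routine but slightly fiddly ingredient.
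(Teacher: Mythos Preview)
Your proof is correct and follows essentially the same approach as the paper: both arguments reduce the claim to finding some $z\in\{\alpha,\beta\}$ such that $S\cup\{z\}$ does not destabilise $Ch_k$, and then use Lemmas~\ref{Chkdestab3} and~\ref{Chkdestab4} together with the structural fact that a non-bivalent vertex of $Ch_k$ has at most one bivalent neighbour (your version: $\{a,c\}\cap\{b,d\}=\emptyset$). The only organisational difference is that the paper splits on whether $S\cap\{\alpha,\beta\}\neq\emptyset$ and directly names the set $S'$, whereas you first show $S$ itself does not destabilise $Ch_k$ and only invoke the $S\cup\{z\}$ argument when every maximum independent set avoiding $S$ contains both $\alpha$ and $\beta$; your contradiction ``$u\in\{a,c\}$ and $u\in\{b,d\}$'' is in fact a slightly more explicit justification of the step the paper phrases tersely as ``by Lemmas~\ref{Chkdestab3} and~\ref{Chkdestab4} this set is not a destabiliser.''
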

\begin{proof}
If $S \cap \{\alpha,\beta\} \neq \emptyset$ let $S' := S$. By assumption $S' \neq N_{Ch_k}[v]$ for all $v \in V_2(Ch_k)$ and therefore $S'$ does not destabilise $Ch_k$ by Lemma \ref{Chkdestab3}.

Otherwise $|S \cap V_2(Ch_k)| = 2$. The vertex in $S \setminus V_2(Ch_k)$ is adjacent to at most one bivalent vertex in $Ch_k$ and therefore either $S \cup \{\alpha\}$ or $S \cup \{\beta\}$ is a disconnected set in $Ch_k$ of size four not containing all bivalent vertices of $Ch_k$. By Lemmas \ref{Chkdestab3} and \ref{Chkdestab4} this set is not a destabiliser of $Ch_k$. In this case define $S' := S \cup \{z\}$ where $z \in \{\alpha,\beta\}$ and $S'$ does not destabilise $Ch_k$.

There is some independent set $I$ of size $\alpha(Ch_k) = k$ in $Ch_k$ such that $I \cap S' = \emptyset$. But then $I$ contains at most one endpoint of $\{\alpha,\beta\}$ and therefore $I$ is also independent in $G$. Moreover $\alpha(G) = \alpha(Ch_k) = k$ since there are maximum independent sets in $Ch_k$ avoiding $\alpha$, for instance.

Hence $I$ is a maximum independent set in $G$ such that $S' \cap I = \emptyset$, so $S \subseteq S'$ does not destabilise $G$.
\end{proof}

Let $\mathcal{H}$ be a set of graphs. A graph $G$ is called \emph{$\mathcal{H}$-avoiding} if $G$ has no subgraph that is isomorphic to any of the graphs in $\mathcal{H}$. The set of cycle graphs of lengths $3,4,\dots,k$ will be denoted by $C_{\leq k}$. The length of the shortest cycle in $G$ is called the \emph{girth} of $G$. Note that $G$ is $C_{\leq k}$-avoiding if and only if $G$ has girth at least $k+1$.

The \emph{distance} between two vertices $u,v \in V(G)$ is denoted $\dist_G(u,v)$ and is defined to be the least number of edges in a path from $u$ to $v$, or infinity if there is no such path. If the graph is clear from context we will drop the subscript from the notation.

\begin{lemma}
\label{nbrsincycle}
If $G$ is a $C_{\leq 4}$-avoiding graph which contains a $k$-cycle $C=c_1,c_2,\dots,c_k$, then for all $v \in V(G)\setminus C$ we have $|N(v) \cap C| \leq \lfloor \frac{k}{3} \rfloor$.
\end{lemma}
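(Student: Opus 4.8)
The plan is a short counting argument driven by a forbidden-configuration observation. Write $m = |N(v)\cap C|$. If $m\le 1$ there is nothing to prove, since $C$ is a cycle and $G$ is $C_{\leq 4}$-avoiding, so $k\ge 5$ and $\lfloor k/3\rfloor\ge 1$. So assume $m\ge 2$ and list the neighbours of $v$ on $C$ as $u_1,u_2,\dots,u_m$ in the cyclic order in which they occur along $C$. Since $m$ is an integer, it suffices to prove $k\ge 3m$.

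The key claim is that any two \emph{distinct} neighbours $c_a,c_b$ of $v$ lying on $C$ satisfy $\dist_C(c_a,c_b)\ge 3$. I would prove this by contradiction: let $P$ be a shortest arc of $C$ joining $c_a$ and $c_b$, of length $\ell=\dist_C(c_a,c_b)\in\{1,2\}$. If $\ell=1$ then $c_ac_b\in E(C)\subseteq E(G)$, so $v,c_a,c_b$ is a triangle in $G$; if $\ell=2$, letting $w$ be the internal vertex of $P$, the four vertices $v,c_a,w,c_b$ are pairwise distinct (here we use $v\notin C$) and the edges $vc_a, c_aw, wc_b, c_bv$ form a $4$-cycle in $G$. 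Either way $G$ contains a cycle of length at most $4$, contradicting that $G$ is $C_{\leq 4}$-avoiding.

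Now the vertices $u_1,\dots,u_m$ partition $C$ into $m$ arcs; let $a_j$ be the length of the arc running from $u_j$ to $u_{j+1}$ (indices mod $m$), so that $\sum_{j=1}^{m}a_j=k$. For each $j$ the two arcs of $C$ with endpoints $u_j$ and $u_{j+1}$ have lengths $a_j$ and $k-a_j$, hence $\dist_C(u_j,u_{j+1})=\min(a_j,k-a_j)$; by the claim this is at least $3$, so in particular $a_j\ge 3$. Summing over $j$ gives $k=\sum_{j=1}^{m}a_j\ge 3m$, which is the desired bound $m\le\lfloor k/3\rfloor$.

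This is an easy lemma, so there is no real obstacle; the only points requiring a little care are the small cases (the reduction handling $m\le 1$, and for $m=2$ noting that $\min(a_1,a_2)\ge 3$ already forces \emph{both} arcs to have length $\ge 3$) and the bookkeeping fact that distance within a cycle equals the minimum of the two arc lengths between the endpoints.
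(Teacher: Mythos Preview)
Your proof is correct and follows essentially the same approach as the paper: both establish that any two distinct $C$-neighbours of $v$ are at cycle-distance at least $3$ (the paper phrases this via $\dist_{G\setminus v}\ge 3$ and then \emph{a fortiori} in $G[C]$), and then conclude by the arc-summing argument, which the paper compresses into the single phrase ``at most a third of the vertices of $C$ can be in the neighbourhood of $v$''. Your version simply spells out the counting more carefully.
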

\begin{proof}
Let $H := G[C]$ be the induced graph on $C$. Since $G$ is $C_{\leq 4}$-avoiding we have that $\forall{u_1,u_2} \in N(v): \dist_{G\setminus v}(u_1,u_2) \geq 3$. Hence, a fortiori, $\dist_H(u_1,u_2) \geq 3$ for all $u_1,u_2 \in N(v) \cap C$ so at most a third of the vertices of $C$ can be in the neighbourhood of $v$.  The lemma follows.
\end{proof}

In particular the previous lemma gives us that if we have a cycle of length five in a subgraph of $G$ then any vertex outside the cycle can be adjacent to at most one vertex in the cycle. This fact will be used frequently in what follows.

\section{The invariant and basic properties}
\label{theinvariant}


We will prove that $\nu$ is a non-negative invariant for all triangle-free graphs $G$. Let $\calC(G)$ denote the set of connected components of the graph $G$. Note in particular that $\nu(G) = \sum_{C \in \calC(G)} \nu(C)$.

\begin{property}
\label{prop-removalcount}
Let $G$ be a triangle-free graph, then
$$\forall v \in V(G): \nu(G_v) \leq \nu(G) - 3 d^2(v) + 17 d(v) - 18 - \N(C_4;G,N(v)).$$
\end{property}
\begin{proof}
Note that $n(G_v) = n(G) - d(v) - 1$ and $e(G_v) = e(G) - d^2(v)$ (since $G$ is triangle-free). Also we have that $\alpha(G_v) \leq \alpha(G) - 1$ since any maximum independent set in $G$ must either contain $v$ or a vertex in the neighbourhood of $v$. Also, $\N(C_4;G_v) = \N(C_4;G) - \N(C_4;G,N(v))$ since any cycle of length four through $v$ also goes through one of the neighbours of $v$.

Therefore, we get that
\begin{equation*}
\begin{aligned}
  \nu(G_v) &\leq \nu(G) - 3d^2(v) + 17d(v) + 17 - 35 - \N(C_4;G,N(v)) \\
    &= \nu(G) - 3d^2(v) + 17d(v) - 18 - \N(C_4;G,N(v)).
\end{aligned}
\end{equation*}
\end{proof}

We will let $H \leq G$ denote that $H$ is a subgraph of $G$ and $H < G$ will denote that $H$ is a proper subgraph of $G$ (i.e. that $H \leq G$ but $H$ is not equal to $G$).

We will often want to determine a bound for $\nu(G_{s_1,s_2,\dots,s_k})$ in $\nu(G)$. We will then use the notation
$$\nu(G_{s_1,s_2,\dots,s_k}) \leq \nu(G) + d_1 - (c_1) + d_2 - (c_2) + \dots + d_k - (c_k)$$
to indicate that $\nu(G_{s_1,s_2,\dots,s_{\ell}}) - d_{\ell} + c_{\ell} \leq \nu(G_{s_1,s_2,\dots,s_{\ell-1}})$ and that $\N(C_4; G_{s_1,s_2,\dots,s_{\ell}}) + c_{\ell} \leq \N(C_4; G_{s_1,s_2,\dots,s_{\ell-1}})$. Intuitively this may be thought of as saying that when removing $s_1$ and all its neighbours from $G$ the $\nu$-value increases by at most $d_1 - c_1$ and we remove at least $c_1$ vertices from the graph. Then we remove $s_2$ from $G_{s_1}$ and $d_2 - c_2$ is gives a bound on the increase in $\nu$-value from $G_{s_1}$ to $G_{s_1,s_2}$ and $c_2$ indicates the least number of cycles known to be removed. It is sometimes useful to think of $G_{s_1,s_2,\dots,s_k}$ as the vertices, $s_1,s_2,\dots,s_k$, are being removed in sequence and in each step we keep track of how much the $\nu$-value and the number of cycles of length four changes.

\subsection{Properties of a graph for which all subgraphs have non-negative $\nu$-value}
\label{asssec}

From here on we assume that $G$ is a triangle-free graph such that every
graph with fewer vertices or the same number of vertices but fewer edges than $G$ has non-negative  $\nu$-value, i.e.
\begin{equation}
  \tag{A1} \label{bigass}
  \begin{array}{l}
\forall H: n(H) < n(G) \text{ or } n(H) = n(G) \wedge e(H) < e(G): \\
\quad (1) \; \nu(H) \geq 0 \text{ and } \\
\quad (2) \; \text{if } \nu(H) = 0, H \text{ connected, then } H \in \mathcal{G}.
 \end{array}
\end{equation}
Note that this means in particular that for all $H < G$ (all proper subgraphs $H$ of $G$) we assume that $\nu(H) \geq 0$ and if $\nu(H) = 0$, then $H \in \mathcal{G}$. This is how we will most often use assumption \eqref{bigass}.

We then derive some properties for $G$ and its subgraphs. Note that when Theorem \ref{mainthm} has been established all properties for $G$ that have been derived under assumption \eqref{bigass} will be shown to hold in general, for all triangle-free graphs $G$.

\begin{lemma} \label{nu-1connected}
Let $\nu(G) \leq -1$ then $G$ is connected.
\end{lemma}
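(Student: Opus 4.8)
The statement is essentially immediate from additivity of $\nu$ over connected components together with assumption \eqref{bigass}. The plan is to argue by contradiction: suppose $G$ is disconnected. Since $G$ is non-empty, it then has at least two connected components, so we may write $G = H_1 + H_2$ (disjoint union) with $H_1, H_2$ both non-empty. Each $H_i$ has strictly fewer vertices than $G$, hence $H_i < G$, so \eqref{bigass}(1) applies and gives $\nu(H_i) \geq 0$ for $i = 1,2$.

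Next I would invoke the additivity remark recorded just before Property \ref{prop-removalcount}, namely $\nu(G) = \sum_{C \in \calC(G)} \nu(C)$ (equivalently $\nu(H_1 + H_2) = \nu(H_1) + \nu(H_2)$, which follows at once from the linearity of $n$, $e$, $\alpha$ and $\N(C_4;\cdot)$ under disjoint unions, noting that a disjoint union creates no new $C_4$'s). Applying this to $G = H_1 + H_2$ yields $\nu(G) = \nu(H_1) + \nu(H_2) \geq 0$, which contradicts the hypothesis $\nu(G) \leq -1$. Therefore $G$ must be connected.

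There is no real obstacle here: the only thing to be slightly careful about is that every component of a disconnected non-empty graph is genuinely a \emph{proper} subgraph (so that \eqref{bigass} is applicable), which is clear since a disconnected graph has at least two components, each on fewer than $n(G)$ vertices. One could state the conclusion a little more generally — the same argument shows that any triangle-free graph satisfying \eqref{bigass} with $\nu(G) < 0$ is connected, and indeed that a disconnected such graph has $\nu(G) \geq 0$ — but for the lemma as stated the contradiction above suffices.
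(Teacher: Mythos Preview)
Your proof is correct and is essentially the same argument as the paper's: write $G$ as a disjoint union of two nonempty graphs, use additivity of $\nu$, and derive a contradiction with assumption \eqref{bigass}. The only cosmetic difference is that the paper phrases the contradiction as ``$\nu(G') \leq -1$ or $\nu(G'') \leq -1$'' while you phrase it as ``$\nu(H_1)+\nu(H_2)\geq 0$''; these are equivalent.
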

\begin{proof}
If $\nu(G) \leq -1$ but $G = G' + G''$ for nonempty graphs $G'$ and $G''$ then we would have that $\nu(G') + \nu(G'') = \nu(G) \leq -1$, thus $\nu(G') \leq -1$ or $\nu(G'') \leq -1$. This would contradict assumption \eqref{bigass}.
\end{proof}

\begin{property}
\label{prop-edge-critical}
  If $H \leq G$ and $\nu(H) \leq 2$ then $H$ is edge-critical.
\end{property}
\begin{proof}
Suppose that $H \leq G$ and $\nu(H) \leq 2$ but $H$ is not edge-critical. Then there is an edge $e \in E(H)$ such that $\alpha(H-e) = \alpha(H)$. Hence,
$$\nu(H-e) = \nu(H) - 3 \leq -1,$$
which contradicts assumption \eqref{bigass}.
\end{proof}

We let $\delta(H)$ denote the minimum valency of a graph $H$ and $\Delta(H)$ the maximum valency. The complete graph on $\ell$ vertices is denoted by $K_{\ell}$.

\begin{property}
  \label{prop-mindeg1}
  If $H \leq G$ and $\nu(H) \leq 17$ then $H$ is 1-stable and $\delta(H) \geq 1$.
\end{property}
\begin{proof}
First note that
$$\nu(K_1) = 3 \cdot 0 - 1 \cdot 17 + 35 + 0 = 18.$$
Suppose that $H \leq G$ is such that $\nu(H) \leq 17$ but $\delta(H) = 0$. Let $v \in V(H)$ be a vertex of valency zero. Then we have that
$$\nu(H-v) = \nu(H) - \nu(K_1) \leq 17 - 18 = -1.$$ 
Moreover, if $S$ were a 1-destabiliser then $\nu(H \setminus S) \leq \nu(H) - 18 \leq -1$. In both situations we would get a contradiction to assumption \eqref{bigass}.
\end{proof}

\begin{property}
  \label{prop-mindeg2}
  If $H \leq G$ and $\nu(H) \leq 3$ then $\delta(H) \geq 2$.
\end{property}
\begin{proof}
  Let $H \leq G$ be such that $\nu(H) \leq 3$. Suppose that $\delta(H) < 2$, then $\delta(H) = 1$ since $\delta(H) = 0$ would contradict Property \ref{prop-mindeg1}. Let $v \in V(H)$ be a monovalent vertex such that $d^2(v) = \max\{d^2(v); v \in V(H), d(v) = 1\}$. If $d^2(v) = 1$ then $v$ belongs to a $K_2$-component of $H$, which means that $H = H' + K_2$ for some subgraph $H' < G$. But then $3 \geq \nu(H) = \nu(H') + \nu(K_2) = \nu(H') + 4$, which gives us that $\nu(H') \leq -1$ which contradicts assumption \eqref{bigass}.

Hence, $d^2(v) \geq 2$ and therefore we get by Property \ref{prop-removalcount} that
$$\nu(H_v) \leq \nu(H) - 6 + 17 - 18 - \N(C_4;H,N(v)) \leq -4,$$
again contradicting assumption \eqref{bigass}.
\end{proof}

\begin{property}
  \label{mindegleq4}
  If $H \leq G$ and $\nu(H) \leq 7$ then $\delta(H) \leq 4$.
\end{property}
\begin{proof}
  Suppose that $H \leq G$, $\nu(H) \leq 7$ but $\delta(H) = \delta \geq 5$. Then there is a vertex $v \in V(H)$ of minimum valency $d(v) = \delta $. Such a vertex must have $d^2(v) \geq \delta^2$ and therefore by Property \ref{prop-removalcount}
  $$\nu(H_v) \leq \nu(H) - 3\delta^2 + 17 \delta - 18 - \N(C_4;H,N(v)) \leq \nu(H) - 8 \leq -1,$$
since $-3\delta^2 + 17 \delta \leq 10$ for $\delta \geq 5$, contradicting assumption \eqref{bigass}.
\end{proof}

\begin{property} \label{prop1}
If $H \leq G$ and $\nu(H) \leq 3$ then $H$ is 2-stable.
\end{property}
\begin{proof}
By Property \ref{prop-mindeg2} we get that $\delta(H) \geq 2$. If $S$ is a destabiliser of size 2, then $\nu(H\setminus S) \leq \nu(H) - 9 + 34 - 35 \leq -7$, contradicting assumption \eqref{bigass}.
\end{proof}

\begin{property}
  \label{prop-no2regular}
  If $H \leq G$, $\nu(H) \leq 6$, $C \in \calC(H)$ and $C$ is 2-regular, then $C = C_5$.
\end{property}
\begin{proof}
  Suppose that $H \leq G$ with $\nu(H) \leq 6$ and $C$ is a 2-regular component of $H$. Then $C = C_m$ for some $m \geq 4$ since $G$ is triangle-free and therefore so is $H$. We must have that $\nu(C) \leq 6$ since otherwise we would get that $\nu(H-C) = \nu(H) - \nu(C) \leq -1$. But $n(C_m) = e(C_m) = m$ and $\alpha(C_m) \geq \frac{m-1}{2}$, whence $\nu(C_m) \geq 3m - 17m + 35\left(\frac{m-1}{2}\right) = \frac{7(m-5)}{2}$. However, $\frac{7(m-5)}{2} \geq 7$ for $m \geq 7$ and $\nu(C_6) = 21$. Moreover, $\nu(C_4) = 15$ so the only remaining possibility is that $m = 5$ and $C = C_5$.
\end{proof}

Inductively, by assumption \eqref{bigass}, we have that if $H < G$ has a 3-regular component, then it is one of the two 3-regular graphs $W_5$ and $(2C_7)_{2i}$ defined in Section \ref{nuzerographs}.

\begin{property}
  \label{no3regular}
  If $\nu(G) \leq 0$, $C \in \calC(G)$ and $C$ is 3-regular, then $C \in \{(2C_7)_{2i}, W_5\}$.
\end{property}
\begin{proof}
The conclusion follows immediately from assumption \eqref{bigass} if $G$ is not connected. It is therefore enough to consider when $G$ is connected. Suppose that $G \notin \{(2C_7)_{2i}, W_5\}$.

Suppose furthermore that $\N(C_4;G) = 0$. By the corollary in \cite[p.~202,v.~2015-07-16]{carc} we have that $\nu(C) \geq 1$, since $C$ is $3$-regular and not one of the two graphs $(2C_7)_{2i}$ or $W_5$. But then we would have
$$\nu(G-C) = \nu(G) - \nu(C) \leq -1,$$
contradicting assumption \eqref{bigass}.

Hence, $\N(C_4; G) \geq 1$. Let $\{a,b,c,d\}$ be the vertices of a cycle of length four in $G$. Then $\nu(G_a) \leq \nu(G) + 6 - 1 \leq 5$ and $c$ is at most monovalent in $G_a$, whence $c$ is monovalent in $G_a$ by Property \ref{prop-mindeg1}. Since $e(N(a),N(c)) \leq 1$ (otherwise we would have a triangle) we get that $\nu(G_{a,c}) \leq \nu(G_a) - 7 = -2$, contradicting assumption \eqref{bigass}.
\end{proof}

\begin{property}
  \label{prop-leq6}
  For all $H \leq G$ such that $\nu(H) \leq 6$ either $\delta(H) \geq 2$ or $H$ is edge-critical and $H = H' + K_2$ where $\nu(H') = \nu(H) - 4 \leq 2$ (whence, in particular, $\delta(H') \geq 2$ and $H'$ is edge-critical).
\end{property}
\begin{proof}
Let $H \leq G$ be such that $\nu(H) \leq 6$. By Property \ref{prop-mindeg2} we have that the assertion is true if $\nu(H) \leq 3$, therefore we may assume that $4 \leq \nu(H) \leq 6$.

If $H$ is not edge-critical, then there is an $e \in E(H)$ such that $\alpha(H - e) = \alpha(H)$, whence $\nu(H - e) \leq 6 - 3 = 3$. Thus by Property \ref{prop-mindeg2} we have that $\delta(H-e) \geq 2$, whence also $\delta(H) \geq 2$, as desired.

If on the other hand $H$ is edge-critical, then we may assume that $\delta(H) \leq 1$ since otherwise the assertion is trivially satisfied. This means that $H$ contains a component $C \in \calC(H)$ such that $C \iso K_2$ by Lemma \ref{ecmindeg} and Property \ref{prop-mindeg1}.

Therefore $H = H' + K_2$ where we get
  $$6 \geq \nu(H) = \nu(H') + \nu(K_2) \Rightarrow 2 \geq \nu(H').$$
\end{proof}

\begin{property} \label{prop2}
If $H \leq G$ and $\nu(H) \leq 6$ then either $H$ is 2-stable or $H = H' + K_2$ where $V(K_2)$ is the only 2-destabiliser.
\end{property}
\begin{proof}
By Property \ref{prop-leq6} we have that $\delta(H) \geq 2$ or $H = H' + K_2$. If $\delta(H) \geq 2$ then any destabiliser, $S$, of size 2 would be such that $\nu(H\setminus S) \leq \nu(H) - 9 + 35 - 35 \leq -4$, contradicting assumption \eqref{bigass}.

If $H = H' + K_2$ then $\nu(H') \leq \nu(H) - 4 \leq 2$ and therefore $H'$ is 2-stable by Property \ref{prop1}. Any destabiliser $S$ must contain a destabiliser of one of the components of $H$. If $|S| = 2$ then $S$ must destabilise the $K_2$-component. Hence $S = V(K_2)$.
\end{proof}

\begin{property} \label{prop3}
If $H \leq G$, $\nu(H) \leq 4$ and $\delta(H) \geq 3$ then $H$ is 3-stable.
\end{property}
\begin{proof}
Otherwise let $S$ be a destabilising set of size 3. Since $H$ is triangle-free $e(S) \leq 2$ and therefore $e(H \setminus S) \leq e(H) - 7$. Hence $\nu(H \setminus S) \leq \nu(H) - 21 + 51 - 35 \leq -1$, contradicting assumption \eqref{bigass}.
\end{proof}

\begin{property} \label{prop4}
If $H \leq G$, $\delta(H) = \delta$ and $v \in V(H)$ is a vertex of valency $d$ then either $N_2(v)$ destabilises $H_v$ or $\nu(H) \geq 3 \delta d + 18d - 17$.
\end{property}
\begin{proof}
Suppose that $N_2(v)$ did not destabilise $H_v$, then there would be some independent set, $I$, of size $\alpha(H_v)$ in $H_v$ such that $I \cap N_2(v) = \emptyset$. Since $H$ is triangle-free we would get that $I \cup N(v)$ is an independent set of size $\alpha(H_v) + d(v)$ in $H$. Therefore $\nu(H_v) \leq \nu(H) + 17 - 18d - 3\delta d$ since $\alpha(H) \geq \alpha(H_v) + 3$, $n(H) = n(H_v) - (d + 1)$ and $e(H) \geq e(H_v) + \delta d$. The assertion then follows from the assumption \eqref{bigass} on $H_v$.
\end{proof}

\begin{property}
  \label{tretton}
  If $H \leq G$ is such that $\nu(H) \leq 2$ and $C_5 \notin \Cc(H)$ then for all bivalent vertices $v \in V(H)$ we have that
\begin{enumerate}[(i)]
 \item $5 \leq d^2(v) \leq 6$ with $d^2(v) = 5$ if $\nu(H) \leq 1$,
 \item $\delta(H_v) \geq 2$ with equality if $\nu(H) \leq 1$,
 \item $d^2(v) = 5 \Rightarrow \N(C_4;H, v) = 0$, and
 \item $d^2(v) = 6 \Rightarrow \N(C_4;H,N(v)) = 0$.
\end{enumerate}
\end{property}
\begin{proof}
Let $H$ and $v \in V(H)$ be as in the premises. By Property \ref{prop-mindeg2} we have that $\delta(H) \geq 2$ and therefore $4 \leq d^2(v)$. Note that $H$ contains no 2-regular component by Property \ref{prop-no2regular} and the assumption that $C_5 \notin \calC(H)$.

Suppose that $d^2(v) = 4$. Because $H$ contains no 2-regular component there is a vertex $v' \in V(H)$ such that $d(v') = 2$, $d^2(v') \geq 5$ with $w \in N(v')$ such that $d(w)=2$ and $d^2(w) = 4$ (take $v'$ to be an endpoint in the path-component of $H[\{v \in V(H): d(v) = 2\}]$ which contains $v$). Let $x$ be the vertex in $N(w) \setminus \{v'\}$, the situation is then as illustrated in Figure \ref{fig:tretton1}.

\begin{figure}[h]
\begin{center}
  \begin{tikzpicture}[scale=0.5]
  \GraphInit[vstyle=Classic]
  \renewcommand*{\VertexSmallMinSize}{4pt}
  \Vertex[L=$v'$,Lpos=180]{v}
  \Vertex[x=2,y=1,L=$w$,Lpos=90]{w}
  \Vertex[x=2,y=-1,NoLabel]{w2}
  \Vertex[x=3,y=1,L=$x$,Lpos=90]{x}
  \Vertex[x=4,y=1,empty]{y}
  \Vertex[x=3,y=-0.4,empty]{x2}
  \Vertex[x=3,y=-1.6,empty]{x3}
  \Edge(v)(w)
  \Edge(v)(w2)
  \Edge(w)(x)
  \Edge(w2)(x2)
  \Edge(w2)(x3)
  \Edge(x)(y)
\end{tikzpicture}
\end{center}
\caption{Local structure in the neighbourhood of $v'$.}
\label{fig:tretton1}
\end{figure}
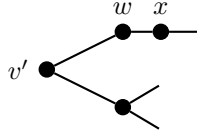

But then we would have that
$$\nu(H_{v'}) \leq \nu(H) + 1 \leq 3,$$
whence $\delta(H_{v'}) \geq 2$ by Property \ref{prop-mindeg2} which contradicts that $d(H_{v'}; x) = 1$. Hence $d^2(v) \geq 5$. Moreover, if $\nu(H) \leq 1$ then $d^2(v) \leq 5$ as well since otherwise we would get $\nu(H_v) \leq \nu(H) - 2 - \N(C_4; H, N(v)) \leq -1$, by Property \ref{prop-removalcount}, contradicting assumption \eqref{bigass}.

For the upper bound in (i) note that if $d^2(v) \geq 7$ then we would have $\nu(H_v) \leq \nu(H) - 5 \leq -3$ which contradicts assumption \eqref{bigass}.

The first assertion of (ii) follows by Property \ref{prop-mindeg2} since we have $\nu(H_v) \leq \nu(H) + 1 \leq 3$. The second assertion of (ii) since $\nu(H_v) \leq 1$ would contradict Property \ref{prop-mindeg2} because we have $\nu(H_v) \leq \nu(H) + 1 - \N(C_4;H,N(v)) \leq 2$ by Property \ref{prop-removalcount}.

For (iii), suppose that $d^2(v) = 5$. If $\N(C_4; H,v) \geq 1$ then $\N(C_4;H,w) \geq 1$ where $w$ is the bivalent neighbour of $v$. We must then have that $d^2(w) \geq 6$ by (ii). Hence, $\nu(H_{w}) \leq \nu(H) - 2 - 1 \leq -1$, which contradicts assumption \eqref{bigass}. Hence $\N(C_4; H,v) = 0$.

Finally, if $d^2(v) = 6$ then $\nu(H_v) \leq \nu(H) - 2 - \N(C_4; H,N(v)) \leq 0 - \N(C_4; H,N(v))$ whence $\N(C_4;H,N(v)) = 0$ or this would contradict assumption \eqref{bigass}. This proves (iv).
\end{proof}

The following corollary is immediate.

\begin{corollary}
\label{kor1}
Let $H \leq G$ be a connected graph such that $\nu(H) \leq 2$. Suppose that there is a bivalent vertex $v \in V(H)$ with second valency four. Then $H \iso C_5$.
\end{corollary}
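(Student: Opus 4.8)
The plan is to invoke Property \ref{tretton} contrapositively. Suppose $H \le G$ is connected with $\nu(H) \le 2$ and that $v \in V(H)$ is bivalent with $d^2(v) = 4$. If we had $C_5 \notin \Cc(H)$, then Property \ref{tretton}(i) would apply to the bivalent vertex $v$ and give $5 \le d^2(v) \le 6$, contradicting $d^2(v) = 4$. Hence $C_5 \in \Cc(H)$.

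Since $H$ is connected, $\Cc(H) = \{H\}$, so the component $C_5 \in \Cc(H)$ must be $H$ itself, i.e. $H \iso C_5$. (One can also note for consistency that $\nu(C_5) = 3\cdot 5 - 17\cdot 5 + 35\cdot 2 + 0 = 0 \le 2$ and that the unique bivalent vertex — every vertex — of $C_5$ has second valency $4$, so the hypotheses are indeed realisable.)

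There is essentially no obstacle here: the only subtlety is making sure the contrapositive is applied to the right statement (it is (i) of Property \ref{tretton} that forbids $d^2(v) = 4$ for bivalent $v$ when $C_5$ is not a component), and that connectedness is what upgrades "$C_5$ is a component" to "$H = C_5$". I would write it as:

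\begin{proof}
If $C_5 \notin \Cc(H)$ then by Property \ref{tretton}(i) every bivalent vertex of $H$ has second valency at least $5$, contradicting the assumption on $v$. Hence $C_5 \in \Cc(H)$, and since $H$ is connected this forces $H \iso C_5$.
\end{proof}
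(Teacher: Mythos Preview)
Your proof is correct and is exactly the argument the paper has in mind: the paper simply states that the corollary is immediate from Property~\ref{tretton}, and your contrapositive application of part~(i) together with connectedness is precisely that immediate deduction.
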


\begin{property}
\label{ddestab}
Let $H \leq G$. If $\nu(H) \leq 2$ and $v \in V(H)$ is bivalent with second valency six, then $N_2(v)$ is a minimal destabiliser of $H_v$ of size four.
\end{property}
\begin{proof}
Note that $H$ is edge-critical by Property \ref{prop-edge-critical}.

$N_2(v)$ clearly has size at most four and if it would have size less than four then there would be a cycle of length four through $v$, contradicting Property \ref{tretton}.

It follows from Lemma \ref{ecdestab} that $N_2(v)$ is a destabiliser. If it were not a \emph{minimal} destabiliser then, $N_2(v) \setminus \{x\}$ would destabilise $H_v$ for some $x \in N_2(v)$. Let $H'$ be the graph obtained when we remove the edge between $x$ and $N(v)$ from $H$. By Lemma \ref{ecalpha} we then have that $\alpha(H') = \alpha(H) + 1$ and if $S$ is a maximum independent set in $H'$, then it must contain both $w_1$ and $x$.

Since $S \cap V(H'_v) = S \cap V(H_v)$ is an independent set we must have that $|S \cap V(H_v)| \leq \alpha(H_v) = \alpha(H) - 1$ and therefore $S$ contains both $w_1$ and $w_2$. But $|S \cap V(H_v)| \leq \alpha(H) - 1$ and $S \cap V(H_v) \cap (N_2(v) \setminus \{x\}) = \emptyset$, contradicting that $N_2(v) \setminus \{x\}$ is a destabiliser.
\end{proof}

\begin{property}
 \label{containsChk2}
 If $H \leq G$ is a connected graph which contains two adjacent bivalent vertices, $v_1$ and $v_2$, each of second valency five, then
\begin{enumerate}[(i)]
  \item $\nu(H) \leq 1 \Rightarrow H \iso Ch_k$ for some $k \geq 3$,
  \item $\nu(H) \leq 2 \Rightarrow \N(C_5;H, v_1) = \N(C_5;H,v_2) = 2$,
\end{enumerate}
whence, in particular, there is at least one cycle of length four through the trivalent neighbours of $v_1$ and $v_2$.
\end{property}
\begin{proof}
The assertion is trivially true for $H = K_1$. Suppose that it holds for all $J \leq G$ with fewer vertices than $H$. Moreover, let $H$ be connected with $\nu(H) \leq 2$ and $\delta(H) = 2$ but suppose that either (i) or (ii) does not hold for $H$.

We then have, a fortiori, $H \not \iso C_5 \iso Ch_2$. Therefore, by Property \ref{tretton}, we have that $\delta(H_{v_1}),\delta(H_{v_2}) \geq 2$ and that there is no cycle of length four through $v_1$ or $v_2$. Denote the trivalent neighbour of $v_i$ by $u_i$ and the two neighbours of $u_i$ that are not $v_i$ by $w_{i1}$ and $w_{i2}$ for $i \in \{1,2\}$. We then have that
\begin{equation}
 \label{wijeq3s}
d(w_{ij}) \geq 3 \quad (\forall i,j \in \{1,2\}).
\end{equation}
The neighbourhood of $v_1$ and $v_2$ in this situation has been illustrated in Figure \ref{fig:a1}.

\begin{figure}[h]
\begin{center}
\begin{tikzpicture}[scale=1]
  \GraphInit[vstyle=Classic]
  \renewcommand*{\VertexSmallMinSize}{4pt}
  \Vertex[L=$u_1$,Lpos=-90]{u1}
  \Vertex[x=1,y=0,L=$v_1$,Lpos=-90]{v1}
  \Vertex[x=2,y=0,L=$v_2$,Lpos=-90]{v2}
  \Vertex[x=3,y=0,L=$u_2$,Lpos=-90]{u2}
  \Vertex[x=-0.7,y=0.7,L=$w_{11}$,Lpos=90]{w11}
  \Vertex[x=-0.7,y=-0.7,L=$w_{12}$,Lpos=-90]{w12}
  \Vertex[x=3.7,y=0.7,L=$w_{21}$,Lpos=90]{w21}
  \Vertex[x=3.7,y=-0.7,L=$w_{22}$,Lpos=-90]{w22}
  \Vertex[x=-1.2,y=0.5,empty]{w111}
  \Vertex[x=-1.2,y=0.7,empty]{w112}
  \Vertex[x=-1.2,y=0.9,empty]{w113}
  \Vertex[x=-1.2,y=-0.5,empty]{w121}
  \Vertex[x=-1.2,y=-0.7,empty]{w122}
  \Vertex[x=-1.2,y=-0.9,empty]{w123}
  \Vertex[x=4.2,y=0.5,empty]{w211}
  \Vertex[x=4.2,y=0.7,empty]{w212}
  \Vertex[x=4.2,y=0.9,empty]{w213}
  \Vertex[x=4.2,y=-0.5,empty]{w221}
  \Vertex[x=4.2,y=-0.7,empty]{w222}
  \Vertex[x=4.2,y=-0.9,empty]{w223}
  \Edge(u1)(w11)
  \Edge(u1)(w12)
  \Edge(u2)(w21)
  \Edge(u2)(w22)
  \Edge(u1)(v1)
  \Edge(v1)(v2)
  \Edge(v2)(u2)
  \Edge(w11)(w111)
  \Edge(w11)(w112)
  \Edge[style=dotted](w11)(w113)
  \Edge(w12)(w121)
  \Edge(w12)(w122)
  \Edge[style=dotted](w12)(w123)
  \Edge(w21)(w211)
  \Edge(w21)(w212)
  \Edge[style=dotted](w21)(w213)
  \Edge(w22)(w221)
  \Edge(w22)(w222)
  \Edge[style=dotted](w22)(w223)
\end{tikzpicture}
\end{center}
\caption{The neighbourhood of $v_1$ and $v_2$ in $H$.}
\label{fig:a1}
\end{figure}
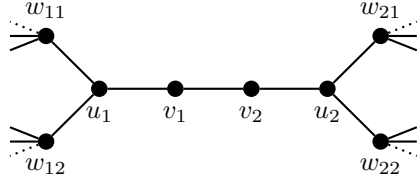

Note however that the vertices $w_{ij}$ are not, a priori, distinct. The vertices $w_{i1}$ and $w_{i2}$ are however distinct.

Note also that for any subgraph $J < H$ such that $\nu(J) \leq 1$ and $\delta(J) = 2$ every bivalent vertex in $J$ must have second valency five by Property \ref{tretton} or belong to a $Ch_2 \iso C_5$-component of $J$. Hence, inductively, any bivalent vertex in $J$ belongs to a $Ch_k$-component for some $k \geq 2$. This fact will be used repeatedly throughout this proof.

\emph{Case 1}: $N(u_1) \cap N(u_2) = \emptyset$. Then all four vertices $w_{ij}$ are distinct and there is no cycle of length five through the $v_1$ and $v_2$. Suppose that $w_{11}$ had a bivalent neighbour $y$, then $\nu(H_{u_1,v_2}) \leq \nu(H) + 9 - 10 \leq 1$ and $d(H_{u_1,v_2};y) = 1$, contradicting Property \ref{prop-mindeg2}. Hence, 
\begin{equation}
\label{wijnobivalnbrs}
w_{ij} \text{ has no bivalent neighbours } \quad (\forall i,j \in \{1,2\}).
\end{equation}

If $d(w_{ij}) \geq 4$ for some $i,j \in \{1,2\}$ then without loss of generality assume that $d(w_{11}) \geq 4$. Hence $\nu(H_{v_2}) \leq \nu(H) + 1 \leq 3$, but $d^2(H_{v_2}; u_1) \geq 7$ which would give $\nu(H_{v_2,u_2}) \leq \nu(H_{v_2}) - 5 \leq -2$, contradicting assumption \eqref{bigass}. Therefore $d(w_{ij}) = 3$ for all $i,j \in \{1,2\}$.

If $d^2(w_{11}) \geq 10$ then we get $\nu(H_{w_{11}}) \leq \nu(H) + 3 \leq 5$ and thus $v_2$ would be monovalent in $H_{w_{11}}$ by $d(H_{w_{11}}; v_1) = 1$ and Property \ref{prop-leq6}. It would then follow that $w_{11}$ is adjacent to $u_2$ contradicting our assumption that $N(u_1) \cap N(u_2) = \emptyset$. Hence, by (\ref{wijnobivalnbrs}) and an analogous argument for $w_{ij} \neq w_{11}$, we get
\begin{equation*}
d^2(w_{ij}) = 9 \quad (\forall i,j \in \{1,2\})
\end{equation*}

This means that $\nu(H_{v_1,w_{11}}) \leq \nu(H) + 1 - 2 \leq 1$ and $d(H_{v_1,w_{11}}; u_2) = 2$. Note that $d^2(H_{v_1}; w_{11}) = 6$ since the $w_{ij}$ are distinct. By Lemma \ref{ecbvconn} we get that $H_{v_1}$ is connected. It is however possible that $H_{v_1,w_{11}}$ is not since $H_{v_1}$ might not be edge-critical.

If $H_{v_1,w_{11}}$ is connected, then since $u_2$ is bivalent in $H_{v_1,w_{11}}$ we must have, by the inductive hypothesis, that $H_{v_1,w_{11}} \iso Ch_k$ for some $k \geq 2$. Also, $N_2(H_{v_1}; w_{11})$ destabilises $H_{v_1,w_{11}}$ and thus, by Lemma \ref{lemma:starone}, either $N_2(H_{v_1};w_{11})$ contains a pair of adjacent bivalent vertices of $H_{v_1,w_{11}}$ or $\N(C_4; G,N(v)) \geq 2$. The latter is not possible however since then we would get $\nu(H_{v_1,w_{11}}) \leq \nu(H) + 1 - 2 - (2) \leq -1$, contradicting assumption \eqref{bigass}. Then since $w_{12}$ and $u_2$ are bivalent in $H_{v_1}$, and not adjacent to $w_{11}$, we would have to have $w_{12}u_2 \in E(H)$, contradicting $N(u_1) \cap N(u_2) = \emptyset$.

On the other hand, if $H_{v_1,w_{11}}$ is disconnected, then every component is 2-stable by Property \ref{prop1}. Therefore $e(C,H\setminus C) \geq 3$ for all $C \in \calC(H_{v_1,w_{11}})$, but $e(N[v_1,w_{11}], H\setminus N[v_1,w_{11}]) = 6$. Hence there are exactly two components, $C_1$ and $C_2$, in $H_{v_1,w_{11}}$ each connected to the rest of the graph by three edges. Moreover $N_2(H_{v_1; w_{21}})$ destabilises one of the components, say $C_1$, since otherwise $\alpha(H_{v_1}) \geq \alpha(H_{v_1,w_{11}}) + 2$ by Lemma \ref{lemma:alpha} and then $\nu(H_{v_1,w_{11}}) \leq \nu(H_{v_1}) - 35 \leq - 32$, contradicting assumption \eqref{bigass}.

By induction, Property \ref{prop3} and Corollary \ref{corollary:A} we get that both $C_1$ and $C_2$ are $Ch_k$-components. Since $e(N(H_{v_1}; w_{11}), V(C_1)) = 3$ we get by Lemma \ref{Chkdestab3} that $N_2(H_{v_1}; w_{11}) \cap N(C_1) = N_{C_1}[x]$ for some bivalent vertex $x$ in $C_1$. Thus by the recursive construction of $Ch_k$-graphs we get that $H' := H[V(C_1) \cup N_{H_{v_1}}[w_{11}]] \iso Ch_{k+1}$ where $k \geq 2$ is such that $C_1 \iso Ch_{k}$. But $e(H',H\setminus H') = 2$, since $e(N(v_1),V(C_2)) = 2$, contradicting Corollary \ref{corollary:A}.

\emph{Case 2:} $|N(u_1) \cap N(u_2)| = 1$. Without loss of generality assume that $w := w_{11} = w_{21}$. The situation is then as illustrated in Figure \ref{fig:a2}.

\begin{figure}[h]
\begin{center}
\begin{tikzpicture}[scale=1]
  \GraphInit[vstyle=Classic]
  \renewcommand*{\VertexSmallMinSize}{4pt}
  \Vertex[L=$u_1$,Lpos=-90]{u1}
  \Vertex[x=1,y=0,L=$v_1$,Lpos=-90]{v1}
  \Vertex[x=2,y=0,L=$v_2$,Lpos=-90]{v2}
  \Vertex[x=3,y=0,L=$u_2$,Lpos=-90]{u2}
  \Vertex[x=1.5,y=1,L=$w$,Lpos=90]{w}
  \Vertex[x=-0.7,y=-0.7,L=$w_{12}$,Lpos=-90]{w12}
  \Vertex[x=3.7,y=-0.7,L=$w_{22}$,Lpos=-90]{w22}
  \Vertex[x=1.2,y=1.3,empty]{w-1}
  \Vertex[x=1.8,y=1.3,empty]{w-2}
  \Vertex[x=-1.2,y=-0.5,empty]{w121}
  \Vertex[x=-1.2,y=-0.7,empty]{w122}
  \Vertex[x=-1.2,y=-0.9,empty]{w123}
  \Vertex[x=4.2,y=-0.5,empty]{w221}
  \Vertex[x=4.2,y=-0.7,empty]{w222}
  \Vertex[x=4.2,y=-0.9,empty]{w223}
  \Edge(u1)(w)
  \Edge(u1)(w12)
  \Edge(u2)(w)
  \Edge(u2)(w22)
  \Edge(u1)(v1)
  \Edge(v1)(v2)
  \Edge(v2)(u2)
  \Edge(w12)(w121)
  \Edge(w12)(w122)
  \Edge[style=dotted](w12)(w123)
  \Edge(w22)(w221)
  \Edge(w22)(w222)
  \Edge[style=dotted](w22)(w223)
  \Edge(w)(w-1)
  \Edge[style=dotted](w)(w-2)
\end{tikzpicture}
\end{center}
\caption{The neighbourhood of $v_1$ and $v_2$ in $H$.}
\label{fig:a2}
\end{figure}
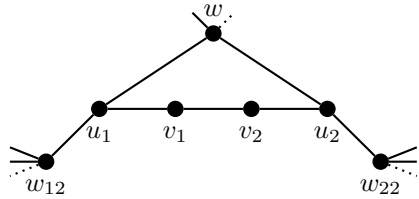

If $w$ had a bivalent neighbour, $y$, then since $\nu(H_{u_1,v_2}) \leq \nu(H) + 9 - 7 \leq 4$ we get that $y$ has a monovalent neighbour $y'$ in $H_{u_1,v_2}$ by Property \ref{prop-leq6}. But $y'$ is not adjacent to any vertex in $N(u_1) \cup N(v_2) \setminus \{w_{12}\}$, whence $y' w_{12} \in E(H)$. Then we would have that $\nu(H_{u_2,v_1,y}) \leq \nu(H) + 9 - 7 - 7 \leq -3$ since $y'$ has valency at least two in $H_{u_2,v_1}$. However, this contradicts assumption \eqref{bigass}, whence $w$ has no bivalent neighbours.

If $w_{12}$ had a bivalent neighbour, $y$, then $\nu(H_{u_1,v_2}) \leq \nu(H) + 9 - 7 \leq 4$ and therefore $y$ would have a monovalent neighbour, $y'$ in $H_{u_1,v_2}$ (by Property \ref{prop-leq6}). But $y'$ would then be adjacent to $w$ and no other vertex in $N(u_1) \cup N(v_2)$, contradicting that $w$ has no bivalent neighbour. By an completely analogous argument for $w_{22}$ we can now say that
\begin{equation}
  \label{wnobivalnbrs}
  w_{ij} \text{ has no bivalent neighbours. } \quad (\forall i,j \in \{1,2\})
\end{equation}

\emph{Subcase 2.1:} $w_{12} w_{22} \in E(H)$. If $d(w_{12}) = 3$ then $d^2(w_{12}) \geq 9$ by (\ref{wnobivalnbrs}) and therefore $\nu(H_{w_{12},v_1}) \leq \nu(H) + 6 - 7 \leq 1$ but $u_2$ is monovalent in $H_{w_{12},v_1}$, contradicting Property \ref{prop-mindeg2}. Hence, $d(w_{12}) \geq 4$ and analogously we get that $d(w_{22}) \geq 4$. If $d(w_{12}) \geq 5$ then we would get $\nu(H_{u_1,v_2}) \leq \nu(H) + 3 - 7 \leq -3$, contradicting the assumption \eqref{bigass}. By an analogous argument again for $w_{22}$ we can now conclude that $d(w_{12}) = d(w_{22}) = 4$, i.e. we have the situation as illustrated in Figure \ref{fig:a3}.

\begin{figure}
\begin{center}
\begin{tikzpicture}[scale=1]
  \GraphInit[vstyle=Classic]
  \renewcommand*{\VertexSmallMinSize}{4pt}
  \Vertex[L=$u_1$,Lpos=-90]{u1}
  \Vertex[x=1,y=0,L=$v_1$,Lpos=-90]{v1}
  \Vertex[x=2,y=0,L=$v_2$,Lpos=-90]{v2}
  \Vertex[x=3,y=0,L=$u_2$,Lpos=-90]{u2}
  \Vertex[x=1.5,y=1,L=$w$,Lpos=0]{w}
  \Vertex[x=-0.7,y=-0.7,L=$w_{12}$,Lpos=-90]{w12}
  \Vertex[x=3.7,y=-0.7,L=$w_{22}$,Lpos=-90]{w22}
  \Vertex[x=1.5,y=1.3,empty]{w-1}
  \Vertex[x=-1.2,y=-0.5,empty]{w121}
  \Vertex[x=-1.2,y=-0.7,empty]{w122}
  \Vertex[x=-1.2,y=-0.9,empty]{w123}
  \Vertex[x=4.2,y=-0.5,empty]{w221}
  \Vertex[x=4.2,y=-0.7,empty]{w222}
  \Vertex[x=4.2,y=-0.9,empty]{w223}
  \Edge(u1)(w)
  \Edge(u1)(w12)
  \Edge(u2)(w)
  \Edge(u2)(w22)
  \Edge(u1)(v1)
  \Edge(v1)(v2)
  \Edge(v2)(u2)
  \Edge(w12)(w121)
  \Edge(w12)(w123)
  \Edge(w22)(w221)
  \Edge(w22)(w223)
  \Edge(w)(w-1)
  \Edge(w12)(w22)
\end{tikzpicture}
\end{center}
\caption{The neighbourhood of $v_1$ and $v_2$ in $H$.}
\label{fig:a3}
\end{figure}
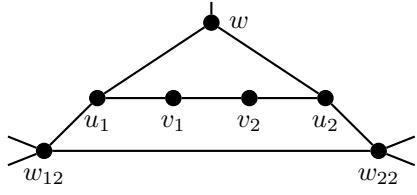

If $d(w) \geq 4$ then $\nu(H_{u_1,v_2}) \leq \nu(H) + 3 - 7 \leq -2$, contradicting assumption \eqref{bigass} . Hence, $d(w) = 3$. If $N(w_{12}) \cap N(w) = \{u_1\}$ then $\nu(H_{w_{12},v_1,u_2}) \leq \nu(H) + 11 - 7 - 7 \leq -1$, contradicting assumption \eqref{bigass}. Thus, by the previous and an analogous argument for $w_{22}$ we must have that
$$N(w_{12}) \cap N(w) \neq \{u_1\} \text{ and } N(w_{22}) \cap N(w) \neq \{u_2\}.$$
But since $d(w) = 3$ we then must have that
$N(w_{22}) \cap N(w) \setminus \{u_2\} = N(w_{12}) \cap N(w) \setminus \{u_1\}$. Let $a$ be an element of $N(w_{22}) \cap N(w) \setminus \{u_2\}$, then we would have a triangle through $w_{12},a$ and $w_{22}$, contradicting the assumption that $G$ is triangle-free.

\emph{Subcase 2.2:} $w_{12} w_{22} \notin E(H)$. Since $\nu(H_{v_i}) \leq \nu(H) + 1 \leq 3$ we must have that $d^2(H_{v_i}; u_{3-i}) \leq 7$ for $i \in \{1,2\}$. Otherwise we would get $\nu(H_{v_i,u_{3-i}}) \leq \nu(H) + 1 - 5 \leq -2$, contradicting assumption \eqref{bigass}.

Suppose that $d(w) \geq 4$, then $d(H_{v_i}; w_{3-i,2}) \leq 3$ for $i \in \{1,2\}$. But then $d(H_{v_i}; w_{3-i,2}) = 3$ by (\ref{wijeq3s}) and $d(H_{v_i}; w_{3-i,2}) = d(w_{3-i,2})$. Now note that $\nu(H_{v_2,u_1}) \leq \nu(H) + 1 - 2 \leq 1$ and $w_{22}$ is bivalent in $H_{v_2,u_1}$. Hence $w_{22} \in V(C)$ for some $C \in \calC(H_{v_2,u_1})$ such that $C \iso Ch_k$, where $k$ is at least two. Let $N := N[u_1,v_2]$. If $H_{v_2,u_1}$ contained more than one component, say some $C' \neq C$, $C' \in \calC(H_{v_2,u_1})$, then $C'$ would have at most two edges to $N$ in $H$. But $C'$ has to be 2-stable by Property \ref{prop1}, so we would have redundant edges in $E(N,H\setminus N)$ by Corollary \ref{corollary:A}.

Hence $H_{v_2,u_1} \iso Ch_k$ and $N_2(H_{v_2}; u_1)$ destabilises $H_{v_2,u_1}$, by Property \ref{prop4}.
Let $T := N_2(H_{v_2}; u_1)$. If $|T| = 4$ and $T$ is a minimal destabiliser, then either $T$ is connected in $H_{v_2,u_1}$ or $k = 3$ and $T = V_2(H_{v_2,u_1})$ by Lemma \ref{Chkdestab4}. The latter is however not possible since $w_{22}$ is bivalent in $H_{v_2}$. The former is not possible either since then $T$ would induce one of the three connected triangle-free graphs of size four ($P_4$, $K_{1,3}$ or $C_4$) in $H_{v_2,u_1}$. It is easy to see that in all three cases $\N(C_4; H_{v_2}, N(u_1)) \geq 2$ and thus $\nu(H_{v_2,u_1}) \leq \nu(H) + 1 - 2 - (2) \leq -1$, contradicting assumption \eqref{bigass}. Hence we get by Lemmas \ref{Chkdestab3} and \ref{Chkdestab4} that $N_C[x] \subseteq T$ for some $x \in V_2(H_{v_2,u_1})$.
Therefore by the recursive construction of $Ch_k$ we must have that $H_{v_2} \iso Ch_{k+1} + \{\alpha, \beta\}$ for some $\{\alpha,\beta\} \in \binom{V(Ch_{k+1})}{2} \setminus E(Ch_{k+1})$. The extra edge $\{\alpha,\beta\}$ is incident to either $w$ or $w_{12}$. In either case $\{\alpha,\beta\}$ is incident to at least one bivalent vertex in $Ch_{k+1}$, since both $w$ and $w_{12}$ are trivalent in $H_{v_2}$. But then $\{\alpha,\beta\}$ is also incident to the bivalent vertex in $V_2(Ch_{k+1}) \setminus \{u_2,w,w_{12},w_{22}\}$, since otherwise that vertex would be bivalent also in $H$, but have second valency 6, and a cycle of length four through its neighbourhood. Hence $\{\alpha,\beta\} \subseteq V_2(Ch_{k+1})$. Now, note that $S := \{u_1,w,w_{22}\} \subseteq V(H_{v_2})$ is such that $|S| = 3$ and $|S \cap V_2(Ch_{k+1})| \geq 2$ since both $u_1$ and $w_{22}$ are bivalent in $H_{v_2}$ (and therefore also in $H_{v_2,w_{11}}$). Furthermore, if $|S \cap V_2(Ch_{k+1})| = 2$ then $S \cap V_2(Ch_{k+1}) = \{u_1,w_{22}\}$ and $u_1$ is not adjacent to $w_{22}$, by assumption. By Property \ref{lemma:Chkplusedge} therefore $H_{v_2}$ is not destabilised by $S = \{u_1,w,w_{22}\} = N_2(v_2)$, contradicting Lemma \ref{ecdestab}.

Hence $d(w) = 3$. Let $y$ denote the neighbour of $w$ that is neither $u_1$ nor $u_2$. This situation is illustrated in Figure \ref{fig:a4}.

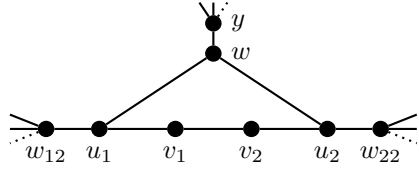
\begin{figure}[h]
\begin{center}
\begin{tikzpicture}[scale=1]
  \GraphInit[vstyle=Classic]
  \renewcommand*{\VertexSmallMinSize}{4pt}
  \Vertex[L=$u_1$,Lpos=-90]{u1}
  \Vertex[x=1,y=0,L=$v_1$,Lpos=-90]{v1}
  \Vertex[x=2,y=0,L=$v_2$,Lpos=-90]{v2}
  \Vertex[x=3,y=0,L=$u_2$,Lpos=-90]{u2}
  \Vertex[x=1.5,y=1,L=$w$,Lpos=0]{w}
  \Vertex[x=-0.7,y=0,L=$w_{12}$,Lpos=-90]{w12}
  \Vertex[x=3.7,y=0,L=$w_{22}$,Lpos=-90]{w22}
  \Vertex[Math,x=1.5,y=1.4]{y}
  \Vertex[x=-1.2,y=-0.2,empty]{w121}
  \Vertex[x=-1.2,y=0,empty]{w122}
  \Vertex[x=-1.2,y=0.2,empty]{w123}
  \Vertex[x=4.2,y=-0.2,empty]{w221}
  \Vertex[x=4.2,y=0,empty]{w222}
  \Vertex[x=4.2,y=0.2,empty]{w223}
  \Vertex[x=1.5,y=1.7,empty]{y1}
  \Vertex[x=1.3,y=1.7,empty]{y2}
  \Vertex[x=1.7,y=1.7,empty]{y3}
  \Edge(u1)(w)
  \Edge(u1)(w12)
  \Edge(u2)(w)
  \Edge(u2)(w22)
  \Edge(u1)(v1)
  \Edge(v1)(v2)
  \Edge(v2)(u2)
  \Edge[style=dotted](w12)(w121)
  \Edge(w12)(w123)
  \Edge[style=dotted](w22)(w221)
  \Edge(w22)(w223)
  \Edge(w)(y)
  \Edge(w12)(w122)
  \Edge(w22)(w222)
  \Edge(y)(y1)
  \Edge(y)(y2)
  \Edge[style=dotted](y)(y3)
\end{tikzpicture}
\end{center}
\caption{The neighbourhood of $v_1$ and $v_2$ in $H$ in Subcase 2.2.}
\label{fig:a4}
\end{figure}

\emph{Subcase 2.2.1:} $d(w_{12}) = 3$ and $d(w_{22}) \geq 4$ (or analogously $d(w_{12}) \geq 4$  and $d(w_{22}) = 3$). We have $\nu(H_{v_1,u_2}) \leq \nu(H) + 1 - 2 \leq 1$ and thus $d(w_{22}) = 4$ (otherwise $\nu(H_{v_1,u_2}) \leq -2$, contradicting assumption \eqref{bigass}). Inductively $w$ is a vertex of some $C \in \calC(H_{v_1,u_2})$ such that $C \iso Ch_k$ for some $k \geq 2$. Moreover, $H_{v_1,u_2}$ is connected since every component of $H_{v_1,u_2}$ is 2-stable and otherwise $e(C,H\setminus C) \leq 2$, contradicting Corollary \ref{corollary:A}.

Since $w_{22}$ has three neighbours in $C$ we must have that $k \geq 3$, since $H$ is triangle-free. Note that $N_2(H_{v_1}; u_2)$ destabilises $C$, but $|N_2(H_{v_1}; u_2)| \leq 4$ and is not adjacent to the, in $H_{v_1,u_2}$, bivalent $w_{12}$. Now, let $T := N_2(H_{v_1};u_2)$. If $|T| = 4$ then either $T$ is not minimal or $T$ is connected in $H_{v_1,u_2}$, by Lemma \ref{Chkdestab4}. But in the latter case $H_{v_1,u_2}[T] \in \{P_4,K_{1,3},C_4\}$ so $\N(C_4; H_{v_1}, N(u_2)) \geq 2$. However, then $\nu(H_{v_1,u_2}) \leq \nu(H) + 1 - 2 - (2) \leq -1$, contradicting assumption \eqref{bigass}.

Hence $N_2(H_{v_1}) \supseteq N_C[x]$, $x \in V_2(C)$ and also $N_2(H_{v_1}; u_2)$ contains a, in $C$, bivalent neighbour of $w_{12}$, by Lemmas \ref{Chkdestab3} and \ref{Chkdestab4}. Therefore $y$ is trivalent in $H$ with at least two cycles of length four through its neighbourhood, and second valency at least 10, which would give
$$\nu(H_{y,w_{22},v_2}) \leq \nu(H) + 3 - (2) + 1 - 7 \leq -3,$$
contradicting assumption \eqref{bigass}.

\emph{Subcase 2.2.2:} $d(w_{12}) = d(w_{22}) = 3$. By (\ref{wnobivalnbrs}) we have that $d(w_{22}) \geq 9$ and if $d^2(w_{22}) \geq 10$ we would get $\nu(H_{w_{22},v_2}) \leq \nu(H) + 3 - 7 \leq -2$, contradicting assumption \eqref{bigass}. Hence $d^2(w_{22}) = 9$ and $\nu(H_{w_{22},v_2}) \leq 1$. Similarly, we also have that $\nu(H_{w_{12},v_1}) \leq 1$. Since $w$ and $u_2$ are bivalent neighbours in $H_{w_{12},v_1}$ they, inductively, belong to some $Ch_k$-component, $C \in \calC(H_{w_{12},v_1})$ for $k \geq 2$ and also $w,u_1$ are adjacent bivalent vertices in a component $C'$ of $H_{w_{22},v_2}$ such that $C' \iso Ch_{\ell}$ for some $\ell \geq 2$. Let $\{z\} = N_C(w_{22}) \setminus \{u_2\}$. Note that $\{y\} = N_C(w)\setminus \{u_2\} = N_{C'}(w) \setminus \{u_w\}$. Furthermore, let $\{x,x'\} := N(w_{12}) \setminus \{u_1\}$. This situation is illustrated in Figure \ref{fig:a5}.

\begin{figure}[h]
\begin{center}
\begin{tikzpicture}[scale=1]
  \GraphInit[vstyle=Classic]
  \renewcommand*{\VertexSmallMinSize}{4pt}
  \Vertex[L=$u_1$,Lpos=-90]{u1}
  \Vertex[x=1,y=0,L=$v_1$,Lpos=-90]{v1}
  \Vertex[x=2,y=0,L=$v_2$,Lpos=-90]{v2}
  \Vertex[x=3,y=0,L=$u_2$,Lpos=-90]{u2}
  \Vertex[x=1.5,y=1,L=$w$,Lpos=0]{w}
  \Vertex[x=-0.7,y=0,L=$w_{12}$,Lpos=-90]{w12}
  \Vertex[x=3.7,y=0,L=$w_{22}$,Lpos=-90]{w22}
  \Vertex[Math,x=2.3,y=1.7,Lpos=180]{y}
  \Vertex[Math,x=3.2,y=1]{z}
  \Vertex[x=3.5,y=1.3,empty]{z1}
  \Vertex[Math,x=-1.2,y=-0.4,Lpos=-90]{x}
  \Vertex[x=-1.2,y=0,empty]{w122}
  \Vertex[Math,x=-1.2,y=0.4,Lpos=90]{x'}
  \Vertex[x=4.2,y=-0.2,empty]{w221}
  \Vertex[x=4.2,y=0,empty]{w222}
  \Vertex[x=4.2,y=0.2,empty]{w223}
  \Vertex[x=-1.5,y=-0.6,empty]{x1}
  \Vertex[x=-1.5,y=-0.2,empty]{x2}
  \Vertex[x=-1.5,y=0.6,empty]{xp1}
  \Vertex[x=-1.5,y=0.2,empty]{xp2}
  \Vertex[x=4.2,y=0,empty]{w222}
  \Vertex[x=2.8,y=2,empty]{y2}
  \Vertex[x=2.8,y=1.7,empty]{y3}
  \Edge(y)(z)
  \Edge(w22)(z)
  \Edge(u1)(w)
  \Edge(u1)(w12)
  \Edge(u2)(w)
  \Edge(u2)(w22)
  \Edge(u1)(v1)
  \Edge(v1)(v2)
  \Edge(v2)(u2)
  \Edge(w12)(x)
  \Edge(w12)(x')
  \Edge(w)(y)
  \Edge(w22)(w222)
  \Edge(y)(y2)
  \Edge[style=dotted](y)(y3)
  \Edge[style=dotted](z)(z1)
  \Edge(x)(x1)
  \Edge(x)(x2)
  \Edge(x')(xp1)
  \Edge(x')(xp2)
  \node at (4.5,1){$\dots$};
\end{tikzpicture}
\end{center}
\caption{The neighbourhood of $v_1$ and $v_2$ in $H$ in subcase 2.2.2.}
\label{fig:a5}
\end{figure}
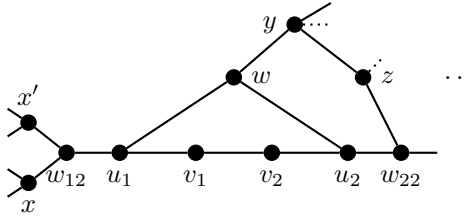

Suppose $k = 2$. By (\ref{wnobivalnbrs}) both $y$ and $z$ are at least trivalent in $H$. Therefore $y$, $z$ and $w_{22}$ are all adjacent to either $x$ or $x'$. Since $H$ is triangle-free this means that $y$ and $w_{22}$ are adjacent to the same neighbour of $w_{12}$, say $x$, and $z$ is adjacent to $x'$. But in this case $x,w_{22},z$ and $y$ forms a cycle of length four and we get $\nu(H_{w_{22},y}) \leq \nu(H) + 6 - (1) - 7 \leq 0$. However, $v_2$ is monovalent in $H_{w_{22},y}$, contradicting Property \ref{prop-mindeg2}.

Hence $k \geq 3$ and by an analogous argument to the one for $k = 2$ we may assume that $\ell \geq 3$. In particular $w_{12}$ and $w_{22}$ both have two common neighbours with $y$, since they do in $C'$ and $C$, respectively. These vertices are clearly distinct. There are also at least two cycles of length four through $y$, whence $\nu(H_{w,v_1}) \leq \nu(H) + 0 - (2) - 4 \leq - 4$, contradicting assumption \eqref{bigass}.

\emph{Subcase 2.2.3:} $d(w_{12}),d(w_{22}) \geq 4$. Then $d(w_{12}) = d(w_{22}) = 4$ since otherwise $\nu(H_{u_i,v_{3-i}}) \leq \nu(H) + 3 - 7 \leq -3$ for some $i \in \{1,2\}$, which would contradict assumption \eqref{bigass}. We then have $\nu(H_{u_i,v_{3-i}}) \leq \nu(H) + 6 - 7 \leq 1$ for both $i \in \{1,2\}$. If $u$ had a bivalent neighbour, $z$, then $z$ does not have a common neighbour with $u_1$ or $u_2$ (since if it did, then it would have second valency at least seven). Thus $\nu(H_{z,w,v_1}) \leq \nu(H) + 1 - 2 - 4 \leq -4$, contradicting assumption \eqref{bigass}. Hence $y$ has no bivalent neighbour in $H$.

Let $H' := H[N[v_1,u_2]]$, then $e(H', H\setminus H') = 5$ and therefore $H_{u_2,v_1}$ has at most one component. Otherwise there would be some component $C'$ of $H_{u_2,v_1}$ such that $e(C', H\setminus C') \leq 2$, contradicting Corollary \ref{corollary:A} since $H$ is edge-critical and $C'$ is 2-stable by Property \ref{prop1}. Analogously, $H_{v_2,u_1}$ is connected.

Suppose that $d(y) = 3$. Then $\nu(H_{u_2,v_1}) \leq 1$ and $d(H_{u_2,v_1}; y) = 2$. Hence $y$, inductively, belongs to some $Ch_k$-component, $C$, of $H_{u_2,v_1}$ for some $k \geq 2$. By the argument above $C = H_{u_2,v_1}$. Since $d(H_{u_2,v_1}; w_{12}) = 3$ we obtain that $N_2(v_1) \cup N_2(u_2)$ does not contain all bivalent vertices of $C$ (because then two adjacent ones would have to be in $N(w_{22})$, contradicting that $H$ is triangle-free). Clearly $k \geq 3$ since $w_{22}$ has three neighbours in $C$. Also, $T := N_2(H_{v_1}; u_2)$ destabilises $C$ by Property \ref{prop4}. Now, $|T| \leq 4$ and if $|T| = 4$ with $T$ connected in $H_{v_1,u_2}$ then $H_{v_1,u_2}[T]$ must be $P_3,K_{1,3}$ or $C_4$ and therefore $\N(C_4; H_{v_1}, N(u_2)) \geq 2$. However, then $\nu(H_{v_1,u_2}) \leq \nu(H) + 1 - 2 - (2) \leq -1$, contradicting the assumption \eqref{bigass}. Therefore we get by Lemmas \ref{Chkdestab3} and \ref{Chkdestab4} that $N_C[x] \subseteq N_2(H_{v_1}; u_2)$ for some, in $C$, bivalent vertex $x$.

Note that $w_{12}$ is not adjacent to $y$ since if $w_{12}y \in E(H)$ then $\nu(H_{v_2,u_1}) \leq \nu(H) + 1 - 2 - (1) \leq 0$ but $y$ would be at most monovalent in $H_{v_2,u_1}$, contradicting Property \ref{prop-mindeg2}. Analogously we can show that $w_{22}u \notin E(H)$. Since $y$ has no bivalent neighbour in $H$ but has a bivalent neighbour in $H_{v_1,u_2}$ we must have $x = y$. Hence $w_{22}$ has two common neighbours with $y$. Analogously one gets that $w_{12}$ has two common neighbours with $y$ also in $C$ since $w_{12}$ is not adjacent to $w$. This is not possible since there no cycles of length four through bivalent vertices in $C \iso Ch_k$.

Hence $d(y) \geq 4$. If $yw_{12}, yw_{22} \in E(H)$ then $\nu(H_{v_2,u_1}) \leq \nu(H) + 1 - (1) - 2 - (1) \leq -1$, contradicting assumption \eqref{bigass}. Hence at most one of $w_{12}$ and $w_{22}$ is adjacent to $y$. Suppose that $yw_{12} \in E(H)$, then $yw_{22} \notin E(H)$ and therefore $\nu(H_{v_2,u_1}) \leq + 1 - (1) - 2 \leq 0$. Moreover $d(H_{v_2,u_1}; y) = 2$, $d(H_{v_2,u_2}; w_{22}) = 3$ and $H_{v_2,u_1}$ is connected. Hence, inductively, $H_{v_2,u_1} \iso Ch_k$ for some $k \geq 3$. In particular, $y$ has a bivalent neighbour $z$ in $H_{v_2,u_1}$. If $d(z) \geq 3$ then $z$ would have to be adjacent to $w_{12}$, but that would give a triangle through $w_{12},z$ and $y$. Hence $d(z) = 2$ and $d^2(z) \geq 4+ 3$, since $z$ has a trivalent neighbour in $H_{u_1,v_2}$ because $k \geq 3$. Thus $y w_{12} \notin E(H)$ and analogously $yw_{22} \notin E(H)$.

\emph{Subcase 2.2.3.1:} $y$ has a trivalent neighbour except for $w$. Let $z$ be such a trivalent neighbour. Since $d^2(w) = 10$ we get that $\nu(H_{w,v_1}) \leq \nu(H) + 3 - 4 \leq 1$ and $z$ is bivalent in $H_{w,v_1}$. So, inductively, $z$ belongs to some $Ch_k$-component, $C$, for $k \geq 2$ of $H_{w,v_1}$. Suppose that $C \iso Ch_2$. If both neighbours, $x_1$ and $x_2$, of $z$ in $C$ were bivalent in $H$, then we would have that $\delta(H_{x_i}) = 1$, contradicting Property \ref{tretton}. Hence, $z$ has a neighbour which has lower valency in $C$ than in $H$. If $C \iso Ch_k$ for some $k \geq 3$ then the bivalent neighbour of $z$ in $C$ is not bivalent in $H$ because then it would have second valency at least six and a cycle of length four through its neighbourhood. Hence, also in this case we have that $z$ has a neighbour which has lower valency in $C$ than in $H$. Let us call such a neighbour $z'$. Since no neighbour of $z$ can be adjacent to $y$ (or we would get a triangle in $H$) we must have that $z'$ is non-adjacent to $y$ and therefore adjacent to $u_1$ or $u_2$. Hence, $z' = w_{12}$ or $z' = w_{22}$, in either case we get that $d(H;z') = 4$ but then $d(H_{w,v_1}; z') = 3$, contradicting that $z'$ is bivalent in $H_{w,v_1}$.

\emph{Subcase 2.2.3.2:} $w$ is the only trivalent neighbour of $y$. In this case we have that $\nu(H_y) \leq \nu(H) + 5 \leq 7$. If $d(H_y;w_{i2}) \geq 4$, then $\nu(H_{y,u_i,v_{3-i}}) \leq \nu(H) + 5 - 2 - 7 \leq - 2$, contradicting the assumption \eqref{bigass}. Hence, $d(H_y; w_{i2}) \leq 3$ for $i \in \{1,2\}$. If $d(H_y; w_{i2}) \leq 2$ for some $i \in \{1,2\}$, then $y$ and $w_{i2}$ would both have two common neighbours. If both $w_{12}$ and $w_{22}$ have two common neighbours with $y$, then $\nu(H_y) \leq \nu(H) + 5 - (2) \leq 5$. All vertices $w_{12},u_1,v_1,v_2,u_2$ and $w_{22}$ would be bivalent in $H_y$. Now, since $w_{12}w_{22}$ we must have that the component, $C$, containing $w_{12}$ in $H_y$ is not 2-regular by Property \ref{prop-no2regular}. Hence $C$ contains some bivalent vertex, $x$, that is an endpoint of a path of bivalent vertices in $C$ of length at least five. Say $z$ is a vertex at distance 2 from $x$ in the path of bivalent vertices, then $\nu(H_{y,x,z}) \leq \nu(H) + 5 - (2) + 1 - 7 \leq -1$, contradicting assumption \eqref{bigass}.

Hence not both $w_{12}$ and $w_{22}$ have two common neighbours with $y$. Suppose that $w_{12}$ has two common neighbours with $y$. Then $\nu(H_{y,u_2,v_1}) \leq \nu(H) + 5 - (1) + 1 - 7 \leq 0$ but $w_{12}$ would be at most monovalent in $H_{y,u_2,v_1}$, contradicting Property \ref{prop-mindeg2}. Therefore $w_{12}$ has exactly one common neighbour with $y$. Analogously $w_{22}$ also has one common neighbour with $y$.

Suppose $\beta_i$ is the common neighbour of $w_{i2}$ and $y$ for $i \in \{1,2\}$ and let $\beta_3$ be remaining neighbour of $y$ that is not $w$, as has been illustrated in Figure \ref{fig:a6}.

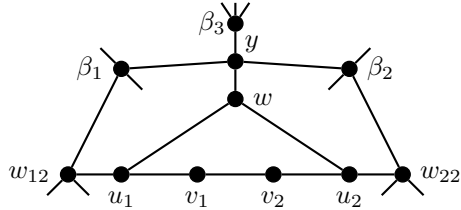
\begin{figure}[h]
\begin{center}
\begin{tikzpicture}[scale=1]
  \GraphInit[vstyle=Classic]
  \renewcommand*{\VertexSmallMinSize}{4pt}
  \Vertex[L=$u_1$,Lpos=-90]{u1}
  \Vertex[x=1,y=0,L=$v_1$,Lpos=-90]{v1}
  \Vertex[x=2,y=0,L=$v_2$,Lpos=-90]{v2}
  \Vertex[x=3,y=0,L=$u_2$,Lpos=-90]{u2}
  \Vertex[x=1.5,y=1,L=$w$,Lpos=0]{w}
  \Vertex[x=-0.7,y=0,L=$w_{12}$,Lpos=-180]{w12}
  \Vertex[x=3.7,y=0,L=$w_{22}$]{w22}
  \Vertex[x=3,y=1.4,L=$\beta_2$]{b2}
  \Vertex[x=0,y=1.4,L=$\beta_1$,Lpos=-180]{b1}
  \Vertex[Math,x=1.5,y=1.5,Lpos=45,Ldist=-3]{y}
  \Vertex[x=-1,y=-0.3,empty]{w121}
  \Vertex[x=-0.4,y=-0.3,empty]{w122}
  \Vertex[x=4,y=-0.3,empty]{w221}
  \Vertex[x=3.4,y=-0.3,empty]{w222}
  \Vertex[x=1.5,y=2,L=$\beta_3$,Lpos=-180,Ldist=-3]{b3}
  \Vertex[x=1.3,y=2.3,empty]{b31}
  \Vertex[x=1.5,y=2.3,empty]{b32}
  \Vertex[x=1.7,y=2.3,empty]{b33}
  \Vertex[x=-0.3,y=1.7,empty]{b11}
  \Vertex[x=0.3,y=1.1,empty]{b12}
  \Vertex[x=3.3,y=1.7,empty]{b21}
  \Vertex[x=2.7,y=1.1,empty]{b22}
  \Edge(b1)(b11)
  \Edge(b1)(b12)
  \Edge(b2)(b21)
  \Edge(b2)(b22)
  \Edge(b3)(b31)
  \Edge(b3)(b32)
  \Edge(b3)(b33)
  \Edge(y)(b3)
  \Edge(y)(b2)
  \Edge(y)(b1)
  \Edge(w12)(b1)
  \Edge(w22)(b2)
  \Edge(u1)(w)
  \Edge(u1)(w12)
  \Edge(u2)(w)
  \Edge(u2)(w22)
  \Edge(u1)(v1)
  \Edge(v1)(v2)
  \Edge(v2)(u2)
  \Edge(w12)(w121)
  \Edge(w12)(w122)
  \Edge(w22)(w221)
  \Edge(w22)(w222)
  \Edge(w)(y)
\end{tikzpicture}
\end{center}
\caption{The neighbourhood of $v_1$ and $v_2$ in $H$ in subcase 2.2.3.2.}
\label{fig:a6}
\end{figure}

We then have that $\nu(H_{y,u_i,v_{3-i}}) \leq \nu(H) + 5 + 1 - 7 \leq 1$ and $w_{(3-i),2}$ is bivalent in $H_{y,u_i,v_{3-i}}$ for $i \in \{1,2\}$. Therefore, inductively $w_{22}$ belongs to a $Ch_k$-component, $C$, of $H_{y,u_1,v_2}$ and $w_{12}$ belongs to a $Ch_l$-component of $H_{y,u_2,v_1}$ for some $k,l \geq 2$.

Suppose $k = 2$. Let $a_1$ and $a_4$ be the neighbours of $w_{22}$ in $C$. Also let $a_2$ and $a_3$ be the neighbours of $a_1$ and $a_4$ in $C$ that are not $w_{22}$, respectively. The situation is then like what is illustrated in Figure \ref{fig:a7}.

\begin{figure}[h]
\begin{center}
\begin{tikzpicture}[scale=1]
  \GraphInit[vstyle=Classic]
  \renewcommand*{\VertexSmallMinSize}{4pt}
  \Vertex[L=$u_1$,Lpos=-90]{u1}
  \Vertex[x=1,y=0,L=$v_1$,Lpos=-90]{v1}
  \Vertex[x=2,y=0,L=$v_2$,Lpos=-90]{v2}
  \Vertex[x=3,y=0,L=$u_2$,Lpos=-90]{u2}
  \Vertex[x=1.5,y=1,L=$w$,Lpos=0]{w}
  \Vertex[x=-0.7,y=0,L=$w_{12}$,Lpos=-180]{w12}
  \Vertex[x=3.7,y=0,L=$w_{22}$,Lpos=-90]{w22}
  \Vertex[x=3,y=1.4,L=$\beta_2$]{b2}
  \Vertex[x=0,y=1.4,L=$\beta_1$,Lpos=-180]{b1}
  \Vertex[Math,x=1.5,y=1.5,Lpos=45,Ldist=-3]{y}
  \Vertex[x=-1,y=-0.3,empty]{w121}
  \Vertex[x=-0.4,y=-0.3,empty]{w122}
  \Vertex[x=1.5,y=2,L=$\beta_3$,Lpos=-180,Ldist=-3]{b3}
  \Vertex[x=1.3,y=2.3,empty]{b31}
  \Vertex[x=1.5,y=2.3,empty]{b32}
  \Vertex[x=1.7,y=2.3,empty]{b33}
  \Vertex[x=-0.3,y=1.7,empty]{b11}
  \Vertex[x=0.3,y=1.1,empty]{b12}
  \Vertex[x=3.3,y=1.7,empty]{b21}
  \Vertex[x=2.7,y=1.1,empty]{b22}
  \Vertex[Math,x=3.7,y=0.7,Lpos=45,Ldist=-4]{a_1}
  \Vertex[Math,x=4.3,y=0.7,Ldist=-2]{a_2}
  \Vertex[Math,x=4.6,y=0.35,Ldist=-2]{a_3}
  \Vertex[Math,x=4.3,y=0,Ldist=-2]{a_4}
  \Vertex[x=3.7,y=1,empty]{a11}
  \Vertex[x=4.5,y=-0.2,empty]{a41}
  \Edge(a_1)(a11)
  \Edge(a_4)(a41)
  \Edges(w22,a_1,a_2,a_3,a_4,w22)
  \Edge(b1)(b11)
  \Edge(b1)(b12)
  \Edge(b2)(b21)
  \Edge(b2)(b22)
  \Edge(b3)(b31)
  \Edge(b3)(b32)
  \Edge(b3)(b33)
  \Edge(y)(b3)
  \Edge(y)(b2)
  \Edge(y)(b1)
  \Edge(w12)(b1)
  \Edge(w22)(b2)
  \Edge(u1)(w)
  \Edge(u1)(w12)
  \Edge(u2)(w)
  \Edge(u2)(w22)
  \Edge(u1)(v1)
  \Edge(v1)(v2)
  \Edge(v2)(u2)
  \Edge(w12)(w121)
  \Edge(w12)(w122)
  \Edge(w)(y)
\end{tikzpicture}
\end{center}
\caption{The neighbourhood of $v_1$ and $v_2$ in $H$ if $k = 2$ in subcase 2.2.3.2.}
\label{fig:a7}
\end{figure}
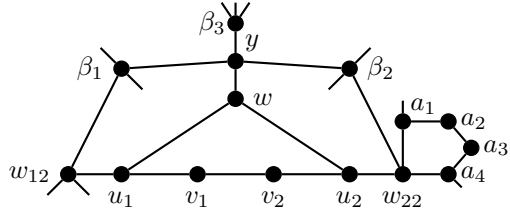

Now, since $w_{22}$ has no bivalent neighbours both $a_1$ and $a_4$ have valency at least three in $H$. Hence, $a_1$ and $a_4$ have neighbours in $\{\beta_1,\beta_2,\beta_3,w_{12}\}$.

If $d(a_1) = 3$ and $d(a_2) = 2$ then $\nu(H_{a_1}) \leq \nu(H) + 3 \leq 5$ and $a_4$ would have to be monovalent, by Property \ref{prop-leq6}, in $H_{a_1}$ as its neighbour $a_3$ is monovalent. Then $a_1$ and $a_4$ must have a common neighbour in $\{\beta_1,\beta_2,\beta_3,w_{12}\}$. But then $\nu(H_{a_1,a_3}) \leq \nu(H) + 3 - (1) - 4 \leq 0$ while $d(H_{a_1,a_3}; u_2) = 2, d^2(H_{a_1,a_3}; u_2) = 5,d(H_{a_1,a_3};v_2) = 2$ and $d^2(H_{a_1,a_3};v_2) = 4$, which is impossible since $u_2$ and $v_2$ should inductively belong to the same $Ch_{k'}$-component of $H_{a_1,a_3}$.

If, on the other hand, $d(a_1) = 3$ and $d(a_2) \geq 3$, then $\nu(H_{a_1}) \leq \nu(H) + 0 \leq 2$. If $d(a_2) \geq 4$ then $a_1$ would have second valency twelve in $H$, giving $\nu(H_{a_1}) \leq \nu(H) - 3 \leq -1$, contradicting assumption \eqref{bigass}. Hence $d(a_2) = 3$. Clearly $d(H_{a_1}; a_3) \geq 2$ and therefore $d(a_3) \geq 3$. If $d(a_3) \geq 4$ then $\nu(H_{a_2}) \leq \nu(H) - 3 \leq -1$, contradicting assumption \eqref{bigass}. Hence $d(a_2) = d(a_3) = 3$.

Now, $u_2$ is bivalent in $H_{a_2}$ and $H_{a_3}$ of second valency five but $v_2$ is bivalent of second valency four. Hence $u_2$ does not belong to a $Ch_k$-component in $H_{a_2}$ or $H_{a_3}$. In particular, $\nu(H_{a_1}), \nu(H_{a_4}) \geq 2$ since otherwise this would contradict the inductive assumption by Property \ref{tretton}. But if either $a_3$ or $a_2$ were adjacent to $\beta_2$ there would be a cycle of length four through that vertex. On the other hand if neither $a_2$ nor $a_3$ is adjacent to $\beta_2$ then all four vertices (since $H$ is triangle-free) $a_1,a_2,a_3$ and $a_4$ are adjacent to vertices in $\{w_{12},\beta_1,\beta_3\}$. It is easy to see that we then must get a cycle of length four through $a_1$ or $a_4$. Thus, independently of whether either $a_2$ or $a_3$ is adjacent to $\beta_2$, we get $\nu(H_{a_1}) \leq \nu(H) + 0 - (1)\leq 1$ or $\nu(H_{a_4}) \leq \nu(H) + 0 - (1) \leq 1$, in either case a contradiction.

Hence $d(a_1) \geq 4$ and analogously we get that $d(a_4) \geq 4$. So $\nu(H_{w_{22},v_1}) \leq \nu(H) + 5 - 7 \leq 0$. Thus neither $a_2$ nor $a_3$ is bivalent in $H$ (or they would be monovalent in $H_{w_{22},v_1}$). Therefore $e(V(C),\{\beta_1,\beta_2,\beta_3,w_{12}\}) \geq 5$, whence some vertex in $\{\beta_1,\beta_2,\beta_3,w_{12}\}$ is adjacent to two vertices in $V(C)$. These two vertices must form an independent set in $C$ and therefore are at distance two in $C$. Hence $\N(C_4; H,N(w_{22})) \geq 1$ which gives us $\nu(H_{w_{22},v_1}) \leq \nu(H) + 5 - (1) - 7 \leq - 1$, contradicting the assumption \eqref{bigass}.

Hence, $k \geq 3$ and the graph looks like in Figure \ref{fig:a8}.

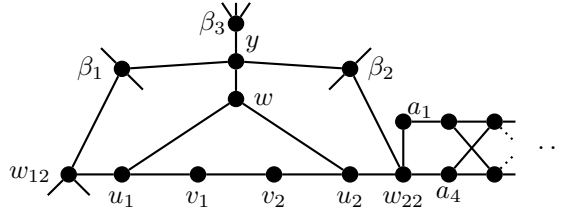
\begin{figure}[h]
\begin{center}
\begin{tikzpicture}[scale=1]
  \GraphInit[vstyle=Classic]
  \renewcommand*{\VertexSmallMinSize}{4pt}
  \Vertex[L=$u_1$,Lpos=-90]{u1}
  \Vertex[x=1,y=0,L=$v_1$,Lpos=-90]{v1}
  \Vertex[x=2,y=0,L=$v_2$,Lpos=-90]{v2}
  \Vertex[x=3,y=0,L=$u_2$,Lpos=-90]{u2}
  \Vertex[x=1.5,y=1,L=$w$,Lpos=0]{w}
  \Vertex[x=-0.7,y=0,L=$w_{12}$,Lpos=-180]{w12}
  \Vertex[x=3.7,y=0,L=$w_{22}$,Lpos=-90]{w22}
  \Vertex[x=3,y=1.4,L=$\beta_2$]{b2}
  \Vertex[x=0,y=1.4,L=$\beta_1$,Lpos=-180]{b1}
  \Vertex[Math,x=1.5,y=1.5,Lpos=45,Ldist=-3]{y}
  \Vertex[x=-1,y=-0.3,empty]{w121}
  \Vertex[x=-0.4,y=-0.3,empty]{w122}
  \Vertex[x=1.5,y=2,L=$\beta_3$,Lpos=-180,Ldist=-3]{b3}
  \Vertex[x=1.3,y=2.3,empty]{b31}
  \Vertex[x=1.5,y=2.3,empty]{b32}
  \Vertex[x=1.7,y=2.3,empty]{b33}
  \Vertex[x=-0.3,y=1.7,empty]{b11}
  \Vertex[x=0.3,y=1.1,empty]{b12}
  \Vertex[x=3.3,y=1.7,empty]{b21}
  \Vertex[x=2.7,y=1.1,empty]{b22}
  \Vertex[Math,x=3.7,y=0.7,Lpos=45,Ldist=-6]{a_1}
  \Vertex[Math,x=4.3,y=0.7,NoLabel]{a_2}
  \Vertex[Math,x=4.9,y=0.7,NoLabel]{a_3}
  \Vertex[Math,x=4.3,y=0,Ldist=-2,Lpos=-90]{a_4}
  \Vertex[x=4.9,y=0,NoLabel]{a41}
  \Vertex[x=5.2,y=0,empty]{a411}
  \Vertex[x=5.2,y=0.3,empty]{a412}
  \Edge(a41)(a411)
  \Edge[style=dotted](a41)(a412)
  \Vertex[x=5.2,y=0.7,empty]{a31}
  \Vertex[x=5.2,y=0.4,empty]{a32}
  \Edge(a_3)(a31)
  \Edge[style=dotted](a_3)(a32)
  \Edge(a41)(a_2)
  \Edge(a_4)(a41)
  \Edges(w22,a_1,a_2,a_3,a_4,w22)
  \Edge(b1)(b11)
  \Edge(b1)(b12)
  \Edge(b2)(b21)
  \Edge(b2)(b22)
  \Edge(b3)(b31)
  \Edge(b3)(b32)
  \Edge(b3)(b33)
  \Edge(y)(b3)
  \Edge(y)(b2)
  \Edge(y)(b1)
  \Edge(w12)(b1)
  \Edge(w22)(b2)
  \Edge(u1)(w)
  \Edge(u1)(w12)
  \Edge(u2)(w)
  \Edge(u2)(w22)
  \Edge(u1)(v1)
  \Edge(v1)(v2)
  \Edge(v2)(u2)
  \Edge(w12)(w121)
  \Edge(w12)(w122)
  \Edge(w)(y)
  \node at (5.7,0.35) {$\dots$};
\end{tikzpicture}
\end{center}
\caption{The neighbourhood of $v_1$ and $v_2$ in $H$ if $k \geq 3$ in subcase 2.2.3.2.}
\label{fig:a8}
\end{figure}

Let $a_1$ be the bivalent neighbour of $w_{22}$ in $C$, while $a_4$ is the trivalent neighbour of $w_{22}$ in $C$. $a_1$ is at least trivalent in $H$ and therefore adjacent to one of the vertices in $\{\beta_1,\beta_2,\beta_3,w_{12}\}$, which are all tetravalent. Then if $d(a_1) = 3$ we would have $d^2(a_1) \geq 3 + 4 + 4 = 11$, whence $\nu(H_{a_1}) \leq \nu(H) + 0 - (1) \leq 1$, since there is a cycle of length four through the trivalent neighbour of $a_1$ in $C$. However, $v_2$ would be bivalent, in $H_{a_1}$, with second valency four but not belonging to a $C_5$-component since $w$ is trivalent, contradicting Property \ref{tretton}. Hence $d(a_1) \geq 4$. Suppose that $d(a_4) = 3$. There is a cycle of length four through $a_4$ and we have $d^2(a_4) \geq 4 + 3 + 3 = 10$. We also have that $a_4 w_{12} \notin E(H)$ since $a_4$ also is trivalent in $C$. So in particular $v_1$ has second valency two in $H_{a_4,u_2}$ and thus we have $\nu(H_{a_4,u_2,v_1}) \leq \nu(H) + 3 - (1) + 1 - 7 \leq -3$, contradicting the assumption \eqref{bigass}. Hence also $d(a_4) \geq 4$. So also in the case when $k \geq 3$ we get that the two neighbours of $w_{22}$ in $C$ have valency at least four in $H$. But then we have $d^2(w_{22}) \geq 15$ and therefore $\nu(H_{w_{22},v_2}) \leq \nu(H) + 5 - (1) - 7 \leq -1$, contradicting assumption \eqref{bigass}.

\emph{Case 3:} $|N(u_1) \cap N(u_2)| = 2$. If $\nu(H) = 2$ we are done because we have two cycles of length five through $v_1$ and $v_2$. Suppose therefore that $\nu(H) \leq 1$. Let $w$ and $w'$ denote the two common neighbours of $u_1$ and $u_2$. Since they form, together with $u_1$ and $u_2$, a cycle of length four we get by Property \ref{tretton} that neither $w$ nor $w'$ have valency two. We then have the situation which is illustrated in Figure \ref{fig:a9}.

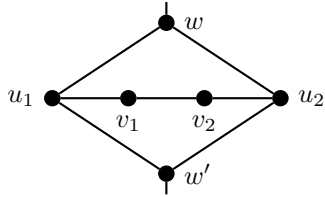
\begin{figure}[h]
\begin{center}
\begin{tikzpicture}[scale=1]
  \GraphInit[vstyle=Classic]
  \renewcommand*{\VertexSmallMinSize}{4pt}
  \Vertex[L=$u_1$,Lpos=-180]{u1}
  \Vertex[x=1,y=0,L=$v_1$,Lpos=-90]{v1}
  \Vertex[x=2,y=0,L=$v_2$,Lpos=-90]{v2}
  \Vertex[x=3,y=0,L=$u_2$,Lpos=0]{u2}
  \Vertex[x=1.5,y=1,L=$w$,Lpos=0]{w}
  \Vertex[x=1.5,y=1.3,empty]{w-1}
  \Vertex[x=1.5,y=-1,L=$w'$,Lpos=0]{wp}
  \Vertex[x=1.5,y=-1.3,empty]{wp-1}
  \Edge(u1)(w)
  \Edge(u2)(w)
  \Edge(u1)(wp)
  \Edge(u2)(wp)
  \Edge(u1)(v1)
  \Edge(v1)(v2)
  \Edge(v2)(u2)
  \Edge(w)(w-1)
  \Edge(wp)(wp-1)
\end{tikzpicture}
\end{center}
\caption{The neighbourhood of $v_1$ and $v_2$ in $H$ in case 3.}
\label{fig:a9}
\end{figure}

Now, since $\nu(H_{v_1}) \leq \nu(H) + 1 - (1) \leq 1$, and $u_2$ is bivalent in $H_{v_1}$, we inductively get that $u_2$ belongs to some $Ch_l$-component of $H_{v_1}$, for some $l \geq 2$. But then $H \iso Ch_{l+1}$ by the recursive Definition \ref{defChk}, here $u_2$ corresponds to ``$x$'' in the definition and $v_1$,$v_2$ and $u_1$ correspond ``$v$'',''$w_1$'' and ``$w_2$'', respectively. However, this contradicts the assumption that $H \not \iso Ch_k$ for all $k \geq 2$.
\end{proof}

As a simple corollary of Property \ref{containsChk2} we have the following property.

\begin{property}
\label{containsChk1}
Let $H \leq G$ be connected with $\delta(H) = 2$. If $\nu(H) \leq 1$ then $H \iso Ch_k$ for some $k \geq 2$. 
\end{property}
\begin{proof}
By Property \ref{tretton} either $H \iso C_5 = Ch_2$ or there is a vertex $v_1 \in V(H)$ such that $d^2(v) = 5$. Hence, by the first part of Property \ref{containsChk2} we get that $H \iso Ch_k$ for some $k \geq 3$.
\end{proof}

\begin{property}
  \label{mindeg3}
  If $\nu(G) \leq -1$ or $\nu(G) = 0$ but $G \notin \mathcal{G}$, then $\delta(G) \geq 3$.
\end{property}
\begin{proof}
  Suppose that $G$ satisfies the premises. By Property \ref{prop-mindeg2} we have $\delta(G) \geq 2$ and by Lemma \ref{nu-1connected} $G$ is connected. Suppose that $\delta(G) = 2$, then by Property \ref{containsChk1} we have that $G \iso Ch_k$ for some $k \geq 2$, contradicting that $\nu(Ch_k) = 0$ for all $k \geq 2$. Hence, $\delta(G) \geq 3$.
\end{proof}

We now show that in subgraphs $H \leq G$, with $\nu(H) \leq 2$, bivalent vertices with bivalent neighbours appear in ``balanced pairs'', i.e. any two adjacent bivalent vertices have the same second valency.

\begin{property}
\label{balancedpair}
Let $H \leq G$ be such that $\nu(H) \leq 2$ and $v \in V(H)$ be bivalent with a bivalent neighbour $u \in N(v)$ then $d^2(v) = d^2(u)$.
\end{property}
\begin{proof}
The assertion is trivially true if $v$ belongs to a $C_5$-component of $H$. Therefore we may assume that $v \in V(H)$ has exactly one bivalent neighbour. By Property \ref{tretton}(i) we then have that $d^2(v),d^2(u) \in \{5,6\}$. For symmetry reasons it is enough to show that if $d^2(v) = 6$ then $d^2(u) \geq 6$.

Let $w$ be the tetravalent neighbour of $v$ and $x$ the neighbour of $u$ that is not $v$. Suppose moreover, for a contradiction, that $x$ is trivalent. Then $x$ is bivalent in $H_v$ and, since $\nu(H_v) \leq \nu(H) - 2 \leq 0$, $x \in C \iso Ch_k \in \calC(H_v)$ for some $k \geq 2$ (by assumption \eqref{bigass}).

By Property \ref{ddestab} $N_2(v)$ is a minimal destabiliser of size four of $C$.
$N_2$ is not connected in $H_v$ since otherwise it would induce a $P_4$, $K_{1,3}$ or $C_4$. In either of these three cases we would get $\N(C_4; H, N(v)) \geq 2$ and then $\nu(H_v) \leq \nu(H) - 2 - (2) \leq -2$, contradicting assumption \eqref{bigass}.
Thus by Lemma \ref{Chkdestab4} we must have that $k = 3$ and $N_2(v) = V_2(C)$. But then $w$ is adjacent to two adjacent vertices in $C$, contradicting that $G$ is triangle-free.
\end{proof}

\begin{property}
\label{Hsndval}
If $H \leq G$, $C_5 \notin \calC(H), \nu(H) \leq 2$ and there are no cycles of length four through any vertices of valency less than four in $H$, then every bivalent vertex of $H$ has second valency six.
\end{property}
\begin{proof}
Since $C_5 \notin \calC(H)$ there are no bivalent vertices of second valency four by Corollary \ref{kor1}. Assume that $v \in V(H)$ is bivalent with second valency five. Then the bivalent neighbour of $v$ would have second valency five by Property \ref{balancedpair}. Therefore, by Property \ref{containsChk2}, there is some cycle of length four through the trivalent neighbour of $v$, contradicting the assumption.
\end{proof}

We will later, under the extra assumption that $\nu(G) \leq -1$, show that $G$ does not contain any cycles of length four. In that situation the above property gives us that every bivalent vertex in $H \leq G$ has second valency four or six.

\begin{property}
\label{bivaldist}
If $H \leq G$, $C_5 \notin \calC(H)$, $\nu(H) \leq 2$ and there are no cycles of length four through vertices of valency less than four in $H$, then for any pair $u,w$ of distinct bivalent vertices of $H$; $\dist(u,w) \in \{1,3\}$.
\end{property}
\begin{proof}
Both $u$ and $v$ have second valency six by Property \ref{Hsndval}. If $\dist(u,w) \geq 4$ then the removal of $u$ and its neighbours from $H_v$ does not affect the valency or second valency of $w$, whence $\nu(G_{v,u,w}) \leq \nu(H) + 3 - 2 - 2 \leq -2$, contradicting assumption \eqref{bigass}.

Hence $\dist(u,w) \leq 3$. If $\dist(u,w) = 2$ then $\nu(H_{v,u}) \leq \nu(G) + 3 - 2 \leq 0$ but $w$ would be at most monovalent in $G_{v,u}$, contradicting Property \ref{prop-mindeg2}.
\end{proof}

\begin{property}
\label{onlytwo24}
If $H \leq G$, $C_5 \notin \calC(H)$, $\nu(H) \leq 2$, there are no cycles of length four through vertices of valency less than four in $H$, and $u \in V(H)$ is a bivalent vertex with a bivalent neighbour, then $u$ and the bivalent neighbour of $u$ are the only two bivalent vertices of $H$.
\end{property}
\begin{proof}
Suppose for a contradiction that $u_1$ and $u_2$ are adjacent bivalent vertices in $H$ and that there is a third bivalent vertex $x$ in $H$. Then since $\dist(x,u_1),\dist(x,u_2) \in \{1,3\}$ (by Property \ref{bivaldist}) we must have that $\dist(x,u_1) = \dist(x,u_2) = 3$ because $H$ is triangle-free.

By Property \ref{Hsndval} we must also have $d^2(H; x) = 6$ and therefore $\nu(H_{x}) \leq \nu(H) - 2 \leq 0$. Since $u_1$ and $u_2$ are adjacent bivalent vertices in $H_{x}$ they belong to some $Ch_k$-component, $C$, of $H_{x}$ for some $k \geq 2$. The local structure in the graph then looks like in Figure \ref{fig:b1}.

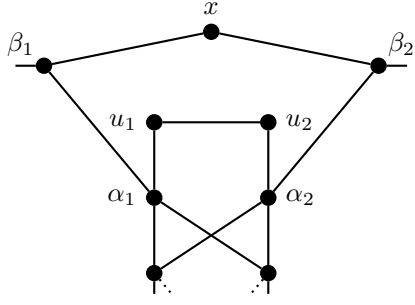
\begin{figure}[h]
\begin{center}
\begin{tikzpicture}[scale=1]
  \GraphInit[vstyle=Classic]
  \renewcommand*{\VertexSmallMinSize}{4pt}
  \Vertex[Math,x=0,y=1.2,Lpos=90]{x}
  \Vertex[Math,x=-0.75,y=0,Lpos=180]{u_1}
  \Vertex[Math,x=0.75,y=0]{u_2}
  \Vertex[x=-0.75,y=-1,Lpos=180,L=$\alpha_1$]{a1}
  \Vertex[x=0.75,y=-1,L=$\alpha_2$]{a2}
  \Vertex[x=-2.2,y=0.75,L=$\beta_1$,Lpos=135,Ldist=-3]{b1}
  \Vertex[x=-2.6,y=0.75,empty]{b11}
  \Vertex[x=2.2,y=0.75,L=$\beta_2$,Lpos=45,Ldist=-3]{b2}
  \Vertex[x=2.6,y=0.75,empty]{b21}
  \Vertex[x=-0.75,y=-2,NoLabel]{a11}
  \Vertex[x=0.75,y=-2,NoLabel]{a21}
  \Vertex[x=-0.75,y=-2.3,empty]{al1}
  \Vertex[x=-0.5,y=-2.3,empty]{al2}
  \Vertex[x=0.75,y=-2.3,empty]{ar1}
  \Vertex[x=0.5,y=-2.3,empty]{ar2}
  \Edge(a11)(al1)
  \Edge[style=dotted](a11)(al2)
  \Edge(a21)(ar1)
  \Edge[style=dotted](a21)(ar2)
  \Edge(a1)(a11)
  \Edge(a2)(a21)
  \Edge(a1)(a21)
  \Edge(a2)(a11)
  \Edge(b2)(b21)
  \Edge(b1)(b11)
  \Edge(b2)(a2)
  \Edge(b1)(a1)
  \Edge(x)(b1)
  \Edge(x)(b2)
  \Edge(u_1)(u_2)
  \Edge(u_1)(a1)
  \Edge(u_2)(a2)
\end{tikzpicture}
\end{center}
\caption{The neighbourhood of $x$ in $H$.}
\label{fig:b1}
\end{figure}

Let $\alpha_1$ and $\alpha_2$ be the tetravalent neighbours of $u_1$ and $u_2$ in $H$, respectively. Then $\alpha_1$ and $\alpha_2$ are trivalent in $H_{x}$ since otherwise $x$ would have to have two common neighbours with $\alpha_i$ (for some $i \in \{1,2\}$) in $H$ and we would get $\nu(H_{x}) \leq \nu(H) - 2 - (1) \leq -1$, contradicting assumption \eqref{bigass}. Hence $k \geq 3$. Thus there is a cycle of length four through $\alpha_1$ and $\N(C_4;H,N(u_1)) > 0$ which gives $\nu(H_{u_1}) \leq \nu(H) + 3 - 2 - (1) \leq -1$, contradicting assumption \eqref{bigass}.
\end{proof}

We now define another class of graphs, which will turn out to be useful. These graphs have have $\nu$-value two. It will later be shown that this class consists of all graphs with $\nu$-value two among the connected triangle-free graphs with minimum valency two.

\begin{definition}
Let $a_1,a_2,b_1$ and $b_2$ denote the four bivalent vertices of $G = Ch_3$, where $a_1a_2,b_1b_2 \in E(G)$. We define the \emph{shackled chain} $SCh_1$ by letting $V(SCh_1) = V(G) \cupdot \{v,w_1,w_2\}$ and $E(SCh_1) = E(G) \cup \{vw_1,vw_2,w_1a_1,w_1b_1,w_2a_2,w_2b_2\}$. For $k \geq 2$ we define $SCh_k$ recursively as follows. Let $a$ be any bivalent vertex in $SCh_{k-1}$ with neighbours $b_1$ and $b_2$. We then set $V(SCh_k) = V(SCh_{k-1}) \cupdot \{v,w_1,w_2\}$ and $E(SCh_k) = E(SCh_{k-1}) \cup \{vw_1,vw_2,w_1a,w_2b_1,w_2b_2\}$.
\end{definition}

For example in Figure \ref{figsch} the graphs $SCh_1$ and $SCh_2$ are shown.
\begin{figure}[h]
\begin{center}
\begin{tikzpicture}[scale=0.75]
  \GraphInit[vstyle=Classic]
  \renewcommand*{\VertexSmallMinSize}{4pt}
  \Vertex[Math,x=0,y=0,Lpos=180]{b_2}
  \Vertex[Math,NoLabel,x=0,y=1]{d}
  \Vertex[Math,NoLabel,x=0,y=2]{c}
  \Vertex[Math,x=0,y=3,Lpos=180]{a_1}
  \Vertex[Math,x=1,y=0]{b_1}
  \Vertex[Math,NoLabel,x=1,y=1]{f}
  \Vertex[Math,NoLabel,x=1,y=2]{e}
  \Vertex[Math,x=1,y=3]{a_2}
  \Vertex[Math,x=3,y=0.5]{w_2}
  \Vertex[Math,x=3,y=2.5]{w_1}
  \Vertex[Math,x=3.5,y=1.5]{v}
  \Edges(a_1,c,d,b_2,b_1,f,e,a_2,a_1)
  \Edge(e)(d)
  \Edge(c)(f)
  \Edge(v)(w_1)
  \Edge(v)(w_2)
  \Edge[style={bend right}](w_1)(a_1)
  \Edge(w_1)(b_1)
  \Edge(w_2)(a_2)
  \Edge[style={bend left}](w_2)(b_2)
\end{tikzpicture} $\; \;$ \begin{tikzpicture}[scale=0.75]
  \GraphInit[vstyle=Classic]
  \renewcommand*{\VertexSmallMinSize}{4pt}
  \Vertex[Math,x=0,y=0,NoLabel]{ob_2}
  \Vertex[Math,NoLabel,x=0,y=1]{od}
  \Vertex[Math,NoLabel,x=0,y=2]{oc}
  \Vertex[Math,x=0,y=3,NoLabel]{oa_1}
  \Vertex[Math,x=1,y=0,NoLabel]{ob_1}
  \Vertex[Math,NoLabel,x=1,y=1]{of}
  \Vertex[Math,NoLabel,x=1,y=2]{oe}
  \Vertex[Math,x=1,y=3,NoLabel]{oa_2}
  \Vertex[Math,x=3,y=0.5,Lpos=-90]{b_2}
  \Vertex[Math,x=3,y=2.5,Lpos=90]{b_1}
  \Vertex[Math,x=3.5,y=1.5,Lpos=180]{a}
  \Vertex[Math,x=4.5,y=0.5]{w_2}
  \Vertex[Math,x=4.5,y=2.5]{w_1}
  \Vertex[Math,x=5.5,y=1.5]{v}
  \Edges(oa_1,oc,od,ob_2,ob_1,of,oe,oa_2,oa_1)
  \Edge(oe)(od)
  \Edge(oc)(of)
  \Edge(a)(b_1)
  \Edge(a)(b_2)
  \Edge[style={bend right}](b_1)(oa_1)
  \Edge(b_1)(ob_1)
  \Edge(b_2)(oa_2)
  \Edge[style={bend left}](b_2)(ob_2)
  \Edge[style={bend right}](w_2)(b_1)
  \Edge(w_2)(b_2)
  \Edge(w_1)(a)
  \Edges(w_2,v,w_1)
\end{tikzpicture}
\end{center}
\caption{The smallest shackled chains $SCh_1$ (left) and $SCh_2$ (right).}
\label{figsch}
\end{figure}
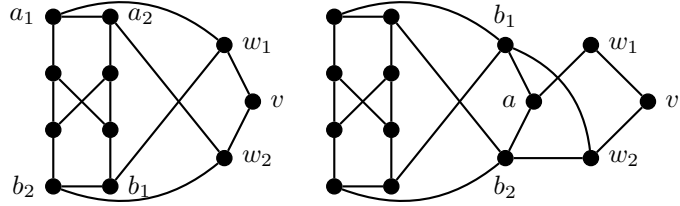

It is not difficult to prove that the definition does not depend (up to isomorphism) on the choice of bivalent vertex in the recursive construction. It is also not hard to see that $n(SCh_k) = 3k + 8$, $e(SCh_k) = 5k + 11$, $\alpha(SCh_k) = k + 3$ and $\N(C_4;SCh_k) = k$ for all $k \geq 1$. Therefore we get that $\nu(SCh_k) = 2$ for all $k \geq 1$.

We begin by showing that the chains and the shackled chains are the only connected graphs with $\nu$-value no more than two and bivalent vertices.

\begin{property}
 \label{classprop1}
  If $H \leq G$ is connected, $\nu(H) \leq 2$ and $\delta(H) = 2$, then $H \iso Ch_k$ for some $k \geq 2$ or $H \iso SCh_{\ell}$ for some $\ell \geq 1$.
\end{property}
\begin{proof}
The assertion is trivial for small graphs, i.e. for $H \leq G$ such that $n(H) = 1$. Assume that for all $J \leq G$ such that $n(J) < n(H)$ the assertion holds. Fix some bivalent vertex $v \in V(H)$. By Properties \ref{prop-mindeg2} and \ref{tretton} we have $d^2(v) \in \{4,5,6\}$.

If $d^2(v) = 4$, then we are done by Corollary \ref{kor1}. If $d^2(v) = 6$ then $\nu(H_v) \leq \nu(H) - 2  \leq 0$ and $\N(C_4; N(v)) = 0$, since otherwise $\nu(H_v) \leq -1$. We then have $|N_2(v)| = 4$ vertices at distance two from $v$. If $N_2(v)$ were connected, then it would induce one of the graphs $C_4$, $K_{1,3}$ or $P_4$, giving $\N(C_4; N(v)) \geq 1$ in all cases. Hence $N_2(v)$ must be a disconnected minimal destabiliser (by Property \ref{prop3} and since a non-minimal disconnected destabiliser of $Ch_k$ ($k \geq 2$) would contains a bivalent vertex and its two neighbours, by Lemma \ref{Chkdestab3}).

It is easily checked that neither $W_5$ nor $(2C_7)_{2i}$ have destabilisers of size four that are disconnected. Also $BC_k$ ($k \geq 5$) has only connected minimal destabilisers of size four (see e.g. \cite[Lemma 6.3(e)]{carc}). The only possibility is that $\delta(H_v) = 2$ and $H_v \iso Ch_k$ for some $k \geq 2$. But then, by Lemma \ref{Chkdestab4}, $k = 3$ and $N_2(v) = V_2(Ch_k)$, which means precisely that $H \iso SCh_1$.

Finally, if $d^2(v) = 5$ then $\N(C_4;N(v)) \geq 1$ by Property \ref{containsChk2}. Hence, $\nu(H_v) \leq \nu(H) + 1 - (1) \leq 2$. The bivalent neighbour of $v$ has a trivalent neighbour by Property \ref{balancedpair}. Thus $\delta(H_v) = 2$ and $H_v \iso Ch_k$ for some $k \geq 2$ or $H_v \iso SCh_\ell$ for some $\ell \geq 1$. By the recursive constructions of $Ch_k$ and $SCh_\ell$ we get that either $H \iso Ch_{k+1}$ or $H \iso SCh_{\ell + 1}$.
\end{proof}

\begin{property}
  \label{classprop2}
  If $H \leq G$ is connected, $\nu(H) \leq 0$, $\delta(H) = 3$ and there is a trivalent vertex $v \in V(H)$ such that $\N(C_4; v) > 0$, then $H \iso BC_k$ for some $k \geq 5$.
\end{property}
\begin{proof}
Suppose that $a \in V(H)$ is a trivalent vertex such that $\N(C_4; a) > 0$, say $\{a,b,c,d\} \subseteq V(H)$ forms a cycle of length four in $H$, where $ab,ad \in E(H)$. Since $\delta(H) = 3$ we have that $d^2(a) \geq 9$. Thus $\nu(H_a) \leq \nu(H) + 6 - (1) \leq 5$.

By Property \ref{prop-leq6} the vertex $c$ lies in a $K_2$-component of $H_a$ if $c$ is trivalent in $H$. In this case say that $t$ is the other vertex of that $K_2$-component. Then $t$ must have at least two common neighbours with $a$ (since $d(t) \geq 3$), whence $N(t) \cap N(c) \neq \emptyset$, contradicting $H$ being triangle-free.

Hence there are no 4-cycles in $H$ with trivalent vertices opposite to each other (i.e. at distance two in the cycle). If both $b$ and $d$ were at least tetravalent, then $d^2(a) \geq 11$. This would give $\nu(H_a) \leq \nu(H) + 0 - (1) \leq -1$, contradicting assumption \eqref{bigass}. Suppose, on the other hand, that $d(a) = d(b) = 3$ but $d(d), d(c) \geq 4$. Then $d(d) = d(c) = 4$ by a similar argument to the previous (either $\nu(H_a) \leq -1$ or $\nu(H_b) \leq -1$, in both cases contradicting assumption \eqref{bigass}).

If $v_1 \in V(H)$ is a trivalent vertex such that $\N(C_4;v_1) > 0$ we must therefore have that the local structure in the neighbourhood of $v_1$ is as in Figure \ref{lnfig1}. We let $v_2$ be the trivalent neighbour of $v_1$ in a 4-cycle. Let $u_i$ be the tetravalent neighbour of $v_i$ in the cycle, and define $w_i$ to be the trivalent neighbour of $v_i$ in $N(v_i) \setminus \{u_i,v_{3-i}\}$.
\begin{figure}[h]
\begin{center}
\begin{tikzpicture}[scale=1]
  \GraphInit[vstyle=Classic]
  \renewcommand*{\VertexSmallMinSize}{4pt}
  \Vertex[Math,x=0,y=0,Lpos=-90]{w_1}
  \Vertex[Math,x=1,y=0,Lpos=-90]{v_1}
  \Vertex[Math,x=2,y=0,Lpos=-90]{v_2}
  \Vertex[Math,x=3,y=0,Lpos=-90]{w_2}
  \Vertex[Math,x=1,y=1,Lpos=-90]{u_2}
  \Vertex[Math,x=2,y=1,Lpos=-90]{u_1}
  \Vertex[x=2,y=1.5,empty]{u11}
  \Vertex[x=2.5,y=1,empty]{u12}
  \Vertex[x=0.5,y=1,empty]{u21}
  \Vertex[x=1,y=1.5,empty]{u22}
  \Vertex[x=-0.3,y=0.3,empty]{w11}
  \Vertex[x=-0.3,y=-0.3,empty]{w12}
  \Vertex[x=3.3,y=0.3,empty]{w21}
  \Vertex[x=3.3,y=-0.3,empty]{w22}
  \Edges(w_1,v_1,u_1,u_2,v_2,w_2)
  \Edge(v_1)(v_2)
  \Edge(u_1)(u11)
  \Edge(u_1)(u12)
  \Edge(u_2)(u21)
  \Edge(u_2)(u22)
  \Edge(w_1)(w11)
  \Edge(w_1)(w12)
  \Edge(w_2)(w21)
  \Edge(w_2)(w22)
\end{tikzpicture}
\end{center}
\caption{The local structure around the trivalent vertex $v_1$ in $H$.}
\label{lnfig1}
\end{figure}

Suppose that $d^2(u_1) \geq 15$. Then $\nu(H_{u_1}) \leq \nu(H) + 5 - (1) \leq 4$ and $d(H_{u_1}; v_2) = 1$. Thus $d(H_{u_1}; w_2) = 1$ by Property \ref{prop-leq6}. Therefore $u_1$ and $w_2$ has two common neighbours. Since $u_1,v_1 \notin N(w_2)$ we get $|N(w_2) \cap N(u_1)| = |N(u_1) \setminus \{v_1,u_1\}| = 2$ and $\N(C_4; u_1) \geq 2$. But then $\nu(H_{u_1}) \leq \nu(H) + 5 - (2) \leq 3$, and $d(H_{u_1}; v_2) = 1$, contradicting Property \ref{prop-mindeg2}.

Hence, $d^2(u_1) \leq 14$ and therefore, in particular, $u_1$ has at least two trivalent neighbours. Analogously we get that $d^2(u_2) \leq 14$.

Let $x_i,y_i \in N(u_i) \setminus \{v_i,u_{3-i}\}$ be such that $d(x_i) \leq d(y_i)$ for $i \in [2]$. Then, in particular, $d(x_i) = 3$.

Suppose that $d(y_1) = 3$, then $\nu(H_{v_1}) \leq \nu(H) + 3 - (1) \leq 2$. If $H_{v_1}$ is connected then $H_{v_1} \iso C_5$ by Property \ref{classprop1} and since there are more than four bivalent vertices in $H_{v_1}$ (five or six depending on wether $w_1$ has two or three trivalent neighbours). Then $u_1$ would have three neighbours in a cycle of length five, contradicting $H$ being triangle-free.

Hence, $H_{v_1}$ is not connected. By Property \ref{classprop1} and since $|N_2(v_1)| \leq 6$ we have no more than two components, whence $|\Cc(H_{v_1})| = 2$.

None of the components of $H_{v_1}$ can contain all the bivalent vertices since then it would contains at least five bivalent vertices, and then we would have a $C_5$-component in which $u_1$ has three neighbours. Therefore both components have $\nu$-value at most two and minimum valency at most two, as well. So by Property \ref{classprop1} we have that each of the two components contain 1,2,4 or 5 bivalent vertices. Hence the distribution of bivalent vertices among the two components is either $1 + 5$ (in which case $H_{v_1} \iso SCh_1 + C_5$) or $2 + 4$ (in which case $H_{v_1} \iso SCh_{\geq 2} + Ch_{\geq 3}$). However, $H_{v_1} \iso SCh_1 + C_5$ is not possible since then $u_1$ would have two neighbours in a $C_5$-component, giving $\N(C_4; N(v_1)) \geq 2$. This would in turn yield $\nu(H_{v_1}) \leq 1$, but this contradicts $\nu(SCh_1 + C_5) = 2$. On the other hand, also $H_{v_1} \iso SCh_{\geq 2} + Ch_{\geq 3}$ is impossible because the four bivalent vertices of the $Ch_{\geq 3}$-component must all be in $N_2(v_1)$. Hence, $|N_2(v_1) \cap V(SCh_{\geq 2})| \leq 2$, but $SCh_{\geq 2}$ is 2-stable by Property \ref{prop1}. This would therefore give redundant edges in $H$ by Lemma \ref{lemma:A}, and therefore contradict Property \ref{prop-edge-critical}.

This shows us that $d(y_1) \geq 4$, and since $d^2(u_1) \leq 14$ we get that $d(y_1) = 4$. Analogously, $d(y_2) = 4$. So the situation looks like in Figure \ref{lnfig2}.
\begin{figure}[h]
\begin{center}
\begin{tikzpicture}[scale=1]
  \GraphInit[vstyle=Classic]
  \renewcommand*{\VertexSmallMinSize}{4pt}
  \Vertex[Math,x=0,y=0,Lpos=-90]{w_1}
  \Vertex[Math,x=1,y=0,Lpos=-90]{v_1}
  \Vertex[Math,x=2,y=0,Lpos=-90]{v_2}
  \Vertex[Math,x=3,y=0,Lpos=-90]{w_2}
  \Vertex[Math,x=1,y=1,Lpos=-90]{u_2}
  \Vertex[Math,x=2,y=1,Lpos=-90]{u_1}
  \Vertex[Math,x=2,y=1.5]{y_1}
  \Vertex[Math,x=3,y=1,Lpos=-90]{x_1}
  \Vertex[Math,x=0,y=1,Lpos=-90]{x_2}
  \Vertex[Math,x=1,y=1.5]{y_2}
  \Vertex[x=-0.3,y=0.3,empty]{w11}
  \Vertex[x=-0.3,y=-0.3,empty]{w12}
  \Vertex[x=3.3,y=0.3,empty]{w21}
  \Vertex[x=3.3,y=-0.3,empty]{w22}
  \Vertex[x=-0.3,y=0.7,empty]{x21}
  \Vertex[x=-0.3,y=1.3,empty]{x22}
  \Vertex[x=3.3,y=0.7,empty]{x11}
  \Vertex[x=3.3,y=1.3,empty]{x12}
  \Vertex[x=1,y=1.8,empty]{y21}
  \Vertex[x=0.7,y=1.8,empty]{y22}
  \Vertex[x=1.3,y=1.8,empty]{y23}
  \Vertex[x=2,y=1.8,empty]{y11}
  \Vertex[x=1.7,y=1.8,empty]{y12}
  \Vertex[x=2.3,y=1.8,empty]{y13}
  \Edges(w_1,v_1,u_1,u_2,v_2,w_2)
  \Edge(v_1)(v_2)
  \Edge(u_1)(x_1)
  \Edge(u_1)(y_1)
  \Edge(u_2)(x_2)
  \Edge(u_2)(y_2)
  \Edge(w_1)(w11)
  \Edge(w_1)(w12)
  \Edge(w_2)(w21)
  \Edge(w_2)(w22)
  \Edge(x_1)(x11)
  \Edge(x_1)(x12)
  \Edge(x_2)(x21)
  \Edge(x_2)(x22)
  \Edge(y_2)(y21)
  \Edge(y_2)(y22)
  \Edge(y_2)(y23)
  \Edge(y_1)(y11)
  \Edge(y_1)(y12)
  \Edge(y_1)(y13)
\end{tikzpicture}
\end{center}
\caption{The local structure around the trivalent vertex $v_1$ in $H$.}
\label{lnfig2}
\end{figure}

Note that in $H_{v_1}$ the vertices $w_2$, $x_1$ and $u_2$ are bivalent, and moreover so is at least one of the vertices in $N(w_1) \setminus \{v_1\}$.

Suppose that $e(\{x_2,y_2\}, N[v_1]) = 0$. Then $d(H_{v_1}; u_2) = 2$, $d^2(H_{v_1}; u_2) = 3 + 4 = 7$, which is impossible since then $\nu(H_{v_1,u_2}) \leq \nu(H) + 3 - (1) - 5 \leq - 3$. Hence, $e(\{x_2,y_2\},N[v_1]) \geq 1$. Neither $y_2$ nor $x_2$ is adjacent to $v_1,u_1$ or $v_2$. Thus, we must have that $y_2w_1 \in E(H)$ or $x_2w_1 \in E(H)$.

Now, suppose that $e(\{x_2,y_2\},w_1) = 1$. We then have that $\nu(H_{v_1,u_2}) \leq \nu(H) + 3 - (1) - 2 \leq 0$ since $d^2(H_{v_1}; u_2) = 6$. Also, $d(H_{v_1,u_2}; x_1) = 2$ and therefore $x_2 \in C \iso Ch_k \in \Cc(H_{v_1,u_2})$ for some $k \geq 2$.

If $k = 2$ then the five vertices in $N(v_1)\cup N(u_2)$ that may be adjacent to some vertex of $V(C)$ are $w_1,v_2,u_1,y_2$ and $x_2$. If any of them were adjacent to more than one vertex in $V(C)$ we would have $\N(C_4;N(v_1) \cup N(u_2)) \geq 2$, giving $\nu(H_{v_1,u_2}) \leq -1$, contradicting assumption \eqref{bigass}. Hence $e(x, V(C)) = 1$ for all $x \in \{w_1,v_2,u_1,y_2,x_2\}$, and therefore $w_2 \in V(C)$. Say that $\{\alpha,\beta\} = V(C) \setminus N(x_1)$. We now have $\nu(H_{x_1}) \leq \nu(H) + 3 - (1) \leq 2$ and $d(H_{x_1}; \alpha) = d(H_{x_1}; \beta) = 2$. If $d^2(H_{x_1}; \alpha) = d^2(H_{x_1}; \beta) = 2 + 3$, then there is a cycle of length four through the trivalent neighbours of $\alpha$ and $\beta$ in $H_{x_1}$ by Property \ref{containsChk2}. We then would have that $\N(C_4;N(v_1) \cup N(u_2)) \geq 2$, which would yield $\nu(H_{v_1,u_2}) \leq -1$, contradicting the inductive assumption. Thus, both $\alpha$ and $\beta$ have second valency $2 + 4$ (by Property \ref{balancedpair}), in $H_{x_1}$. Both $\alpha$ and $\beta$ are then adjacent to $y_2$ since $y_2$ and $u_1$ are the only tetravalent vertices among $w_1,v_2,u_1,y_2,x_2$ and $u_1 \notin V(H_{x_1})$. This, however, contradicts $H$ being triangle-free.

Hence, $k \geq 3$. If $e(N[v_1], N(x_1) \setminus \{u_1\}) = 0$, then $d^2(H_{v_1};x_1) = 6$ which implies that $\nu(H_{v_1,x_1}) \leq \nu(H) + 3 - (1) - 2 - (1) \leq -1$. Therefore we must have that $e(N[v_1],N(x_1) \setminus \{u_1\}) \geq 1$. Now, $d^2(x_1) = 4 + 3 + 3$ and $\N(C_4; N(x_1)) \geq 2$, so $\nu(H_{x_1}) \leq \nu(H) + 3 - (2) \leq 1$. Moreover, $d(H_{x_1}; v_1) = 2$ which implies that $v_1 \in C' \iso Ch_\ell \in \Cc(H_{x_1})$ for some $\ell \geq 2$. Therefore at least one of the vertices $w_1$ and $v_2$ is adjacent to $N(x_1) \setminus \{u_1\}$. If not both of them are, then $\ell \geq 3$ and the trivalent neighbour of $v_1$ in $H_{x_1}$ has a 4-cycle through it. This would give us that $\N(C_4;N(v_1)) \geq 2$, whence $\nu(H_{v_1,u_2}) \leq \nu(H) + 3 - (2) - 2 \leq -1$, contradicting assumption \eqref{bigass}. We must therefore have that both $w_1$ and $v_2$ are adjacent to $N(c_1) \setminus \{x_1\}$, which gives us that $C' \iso C_5$. But $d(x_1) = 3$, so at least two vertices in $N(x_1)$ has two neighbours in $V(C')$.

If $y_2 \in V(C')$ then since $y_2$ is tetravalent all three neighbours of $x_1$ would have to have two neighbours in $C'$, giving $\nu(H_{x_1}) \leq \nu(H) + 3 - (3) \leq 0$, and $x_2 \in N(x_1)$, $u_2 \in V(C')$. Both $y_2$ and $u_2$ are tetravalent in $H$ but bivalent in $H_{x_1}$, so $e(C', H \setminus C') \geq 7$, and therefore at least one vertex of $N(x_1)$ would have to be adjacent to three vertices in $V(C')$. This contradicts $H$ being triangle-free.

Hence we must have $y_2 \notin V(C')$. This means, in particular, that $w_1x_2 \in E(H)$ and $V(C') = \{v_1,v_2,u_2,x_2,w_1\}$. However, this would make $y_2 \in N(x_1)$, contradicting $d^2(x_1) = 3 + 3 + 4$.

Since all cases when $e(\{x_2,y_2\},w_1) = 1$ lead to contradictions we must have that $e(\{x_2,y_2\},w_1) = 2$ and analogously we get $e(\{x_1,y_1\},w_2) = 2$. The situation is therefore as in Figure \ref{lnfig3}.
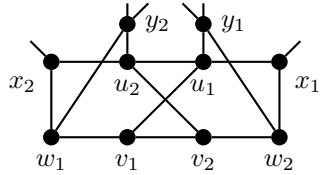
\begin{figure}[h]
\begin{center}
\begin{tikzpicture}[scale=1]
  \GraphInit[vstyle=Classic]
  \renewcommand*{\VertexSmallMinSize}{4pt}
  \Vertex[Math,x=0,y=0,Lpos=-90]{w_1}
  \Vertex[Math,x=1,y=0,Lpos=-90]{v_1}
  \Vertex[Math,x=2,y=0,Lpos=-90]{v_2}
  \Vertex[Math,x=3,y=0,Lpos=-90]{w_2}
  \Vertex[Math,x=1,y=1,Lpos=-90]{u_2}
  \Vertex[Math,x=2,y=1,Lpos=-90]{u_1}
  \Vertex[Math,x=2,y=1.5]{y_1}
  \Vertex[Math,x=3,y=1,Lpos=-45]{x_1}
  \Vertex[Math,x=0,y=1,Lpos=-135]{x_2}
  \Vertex[Math,x=1,y=1.5]{y_2}
  \Vertex[x=-0.3,y=1.3,empty]{x22}
  \Vertex[x=3.3,y=1.3,empty]{x12}
  \Vertex[x=1,y=1.8,empty]{y21}
  \Vertex[x=1.3,y=1.8,empty]{y23}
  \Vertex[x=2,y=1.8,empty]{y11}
  \Vertex[x=1.7,y=1.8,empty]{y12}
  \Edges(w_1,v_1,u_1,u_2,v_2,w_2)
  \Edge(v_1)(v_2)
  \Edge(u_1)(x_1)
  \Edge(u_1)(y_1)
  \Edge(u_2)(x_2)
  \Edge(u_2)(y_2)
  \Edge(w_1)(x_2)
  \Edge(w_2)(x_1)
  \Edge(w_2)(y_1)
  \Edge(x_1)(x12)
  \Edge(y_2)(w_1)
  \Edge(x_2)(x22)
  \Edge(y_2)(y21)
  \Edge(y_2)(y23)
  \Edge(y_1)(y11)
  \Edge(y_1)(y12)
\end{tikzpicture}
\end{center}
\caption{The local structure around the trivalent vertex $v_1$ in $H$.}
\label{lnfig3}
\end{figure}

Note that in particular in this situation we have that $\N(C_4;N(v_1)) = 3$ and $\N(C_4; u_2) \geq 2$. This means that we get $\nu(H_{v_1}) \leq \nu(H) + 3 - (3) \leq 0$. If $H_{v_1}$ were disconnected it would have to contain either at least eight bivalent vertices, or at least four bivalent vertices and a 3-stable component, in either case contradicting $|N_2(v_2)| = 6$.

Since $(x_1,w_2)$ and $(x_2,u_2)$ are pairs of adjacent bivalent vertices this makes $H$ being formed from some $Ch_k$-component by adding the vertices $v_1,v_2,w_1$ and $u_2$ and edges as illustrated in Figure \ref{lnfig4}.
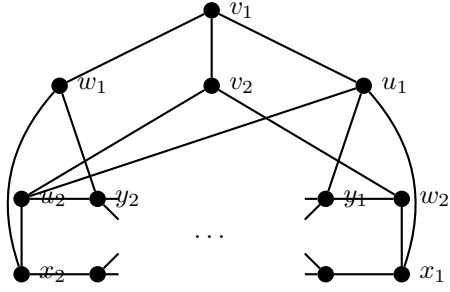
\begin{figure}[h]
\begin{center}
\begin{tikzpicture}[scale=1]
  \GraphInit[vstyle=Classic]
  \renewcommand*{\VertexSmallMinSize}{4pt}
  \Vertex[Math,x=0,y=0]{x_2}
  \Vertex[Math,x=1,y=0,NoLabel]{a}
  \Vertex[x=1.3,y=0,empty]{a2}
  \Vertex[x=1.3,y=0.3,empty]{a1}
  \Vertex[Math,x=4,y=0,NoLabel]{b}
  \Vertex[x=3.7,y=0,empty]{b2}
  \Vertex[x=3.7,y=0.3,empty]{b1}
  \Vertex[Math,x=5,y=0]{x_1}
  \Vertex[Math,x=0,y=1]{u_2}
  \Vertex[Math,x=1,y=1]{y_2}
  \Vertex[x=1.3,y=1,empty]{y21}
  \Vertex[x=1.3,y=0.7,empty]{y22}
  \Vertex[Math,x=4,y=1]{y_1}
  \Vertex[x=3.7,y=1,empty]{y11}
  \Vertex[x=3.7,y=0.7,empty]{y12}
  \Vertex[Math,x=5,y=1]{w_2}
  \Vertex[Math,x=0.5,y=2.5]{w_1}
  \Vertex[Math,x=2.5,y=2.5]{v_2}
  \Vertex[Math,x=4.5,y=2.5]{u_1}
  \Vertex[Math,x=2.5,y=3.5]{v_1}
  \Edges(a1,a,x_2,u_2,y_2,w_1,v_1,u_1,y_1,w_2,x_1,b,b1)
  \Edge(a2)(a)
  \Edge(b2)(b)
  \Edge(v_1)(v_2)
  \Edge(v_2)(u_2)
  \Edge(v_2)(w_2)
  \Edges(y21,y_2,y22)
  \Edges(y11,y_1,y12)
  \Edge(u_1)(u_2)
  \Edge[style={bend right}](w_1)(x_2)
  \Edge[style={bend left}](u_1)(x_1)
  \node at (2.5,0.5) {$\dots$};
\end{tikzpicture}
\end{center}
\caption{How $v_1$ and $v_2$'s neighbourhood looks in relation to the $Ch_k$-component in $H_{v_1}$.}
\label{lnfig4}
\end{figure}

It is easily seen that $k \geq 4$ since $H$ should be triangle-free and $\N(C_4;N(v_1)) = 3$. This makes $H \iso BC_k$ for some $k \geq 5$ since $(BC_k)_v \iso Ch_{k-1}$ where $v$ is any trivalent vertex (connected by edges as indicated in Figure \ref{lnfig4}).
\end{proof}

We will now show that connected subgraphs of $G$ with $\nu$-value zero and minimum valency three either have no cycles of length four or are isomorphic to $BC_k$s. We begin by showing that there cannot be cycles of length four through vertices of low valency. We later use this to exclude such cycles altogether. We begin with neighbourhoods of vertices of valency three.

\begin{property}
 \label{classprop3}
If $H \leq G$ is connected, $\nu(H) \leq 0$, $\delta(H) = 3$ and $H \not \iso BC_k$ for all $k \geq 5$, then $\N(C_4;N(v)) = 0$ for all trivalent vertices $v$ of $H$.
\end{property}
\begin{proof}
Suppose otherwise, i.e. there is some trivalent vertex $v \in V(H)$ such that $\N(C_4;N(v)) > 0$. By Property \ref{classprop2} the vertex $v$ must have at least one non-trivalent neighbour. Hence, $d^2(v) \geq 10$, and clearly $d^2(v) \leq 10$ since otherwise $\nu(H_v) \leq \nu(H) + 0 - (1) \leq -1$, contradicting assumption \eqref{bigass}.

We therefore have $\nu(H_v) \leq \nu(H) + 3 - (1) \leq 2$. Let $w_1,w_2$ denote the two trivalent neighbours of $v$. If we had that $d^2(w_i) = 11$ for both $i \in [2]$ then we would get $\nu(H_{w_1}) \leq \nu(H) + 0 \leq 0$ and $d(H_{w_1}; w_2) = 2$. Thus, $w_2$ would belong to a $Ch_k$-component of $H_{w_1}$ for some $k \geq 2$. It is clear that $k \neq 2$ since otherwise the two tetravalent neighbours of $w_1$ would have to have two common neighbours with $w_2$. On the other hand, for $k \geq 3$ we have that $\N(C_4; N(w_1)) \geq 1$ and we would then get $\nu(H_{w_1}) \leq -1$, contradicting assumption \eqref{bigass}.

Hence, not both $w_1$ and $w_2$ can have second valency eleven. There is therefore bivalent vertices in $H_v$. If there is only one bivalent then, by Property \ref{classprop1}, we must have that $SCh_1$ is a component of $H_v$. But $SCh_1$ contains a cycle of length four containing only trivalent vertices. All four of these vertices must have valency at least four in $H$ by Property \ref{classprop2}. There are only three neighbours of $v$ in $H$, so at least one of them is adjacent to two of the trivalent vertices in the cycle of length four. But then we get $\N(C_4;N(v)) \geq 2$, which would give $\nu(H_v) \leq 1$. However $\nu(SCh_1) = 2$ so we would have to have another component of $H_v$ with negative $\nu$-value, contradicting assumption \eqref{bigass}.

Thus there are at least two bivalent vertices of $H_v$ and therefore, by Property \ref{classprop1}, we must have either a $C_5$ in $\Cc(H_v)$ or at least eight vertices with different valency in $H$ and $H_v$. The latter is impossible since $|N_2(v)| = 7$. The former is impossible since we would get that at least one of $w_1$ and $w_2$ has two neighbours in a $C_5$-component. This would yield a 4-cycle through a trivalent vertex, contradicting Property \ref{classprop2}.
\end{proof}

\begin{property}
 \label{classprop4}
 If $H \leq G$ is connected, $\nu(H) \leq 0$, $\delta(H) \geq 3$, $H \not \iso BC_k$ for all $k \geq 5$, then $\N(C_4;v) = 0$ for all tetravalent vertices $v$ of $H$.
\end{property}
\begin{proof}
  Suppose, to the contrary, that there is a cycle, $C$, of length four in $H$ containing a tetravalent vertex $v$. Note that all the vertices in $C$ have valency four since otherwise there would be a tetravalent vertex $u \in V(C)$ with $d^2(u) \geq 17$ by Property \ref{classprop3}. This is not possible since then $\nu(H_u) \leq \nu(H) - 1 - (1) \leq -2$, contradicting assumption \eqref{bigass}.

Hence, $d^2(v) = 16$ and therefore $\nu(H_v) \leq \nu(H) + 2 - (1) \leq 1$. Let $x \in C$ be the vertex at distance two from $v$ in $C$. Since $x$ has two common neighbours with $v$ we have $d(H_v;x) = 2$. One of the two vertices in $N(x) \setminus V(C)$ has two neighbours in $N(x) \setminus V(C)$, and the other has at least one. This follows from $d^2(H_v;x) \leq 5$ (by Property \ref{containsChk1}) and from $H$ being triangle-free. Thus $\N(C_4; N(v)) \geq 3$, which implies that $\nu(H_v) \leq \nu(H) + 2 - (3) \leq -1$, contradicting assumption \eqref{bigass}.
\end{proof}

\begin{property}
 \label{classprop5}
If $H \leq G$ is connected, $\nu(H) \leq 0$ then $\N(C_4;v) = 0$ for all pentavalent vertices $v$ of $H$.
\end{property}
\begin{proof}
Suppose otherwise, i.e. that there is a cycle of length four, $C$ on $\{c_1,c_2,c_3,c_4\}$ where $c_1c_3 \notin E(H)$, containing a pentavalent vertex, say $c_1$. Since $BC_k,Ch_\ell$ contains no pentavalent vertices we have $H \not \iso BC_k$ and $H \not \iso Ch_\ell$ for all $k \geq 5$ and all $\ell \geq 2$.

Clearly $\delta(H) \geq 3$ then by Properties \ref{prop-mindeg2} and \ref{containsChk1}. All vertices in $C$ are at least pentavalent by Property \ref{classprop4}. Since $c_1$ has no trivalent neighbours (by Property \ref{classprop3}) and at least two neighbours of valency at least five we have $d^2(v) \geq 2\cdot 5 + 3 \cdot 4 = 22$. This gives $\nu(H_v) \leq \nu(H) + 1 - (1) \leq 0$. Hence, in particular, all the vertices of $C$ are pentavalent with three tetravalent and two pentavalent neighbours.

Let $M_i = N(c_i) \setminus V(C)$ for $i \in [4]$. We have that $e(M_i,M_{i+2}) \geq 2$ for $i \in [2]$ since otherwise $c_{i+2}$ would be trivalent with second valency at least eleven in $H_{c_i}$. We would moreover have that $\nu(H_{c_i}) \leq 0$. Thus by assumption \eqref{bigass} we get that all components of $H_{c_i}$ are in $\mathcal{G}$, but in all of the graphs of $\mathcal{G}$ the trivalent vertices all have second valency at most ten.

Each of the vertices in $M_i$ (for $i \in [4]$) must have at least one trivalent neighbour, since otherwise there would be some vertex $m \in M_i$ such that $d^2(m) \geq 17$. This gives us a contradiction to assumption \eqref{bigass} by considering $H_m$. Thus, a fortiori, $H_{c_1}$ contains some bivalent vertices, and therefore at least four bivalent vertices by assumption \eqref{bigass}. Note also that $|N_2(c_1)| \leq 16$.

All trivalent vertices, except possibly $c_3$, in $H_{c_1}$ must belong to $W_5$- or $(2C_7)_{2i}$-components since otherwise there would be a cycle of length four through such a vertex. That vertex would then have to be at least pentavalent in $H$ by Properties \ref{classprop2} and \ref{classprop4}. This would yield $\N(C_4;N(c_1)) \geq 2$, whence $\nu(H_{c_1}) \leq -1$, contradicting assumption \eqref{bigass}.

Hence, $\Cc(H_{c_1})$ consists of $C_5$s, $W_5$s and $(2C_7)_{2i}$s, at least one of which is a $C_5$. Now, since $d(H_{c_1};x) = 3$ for all $x \in M_2 \cup M_4 \cup \{c_3\}$, at least one of the vertices in $M_1$ must have two neighbours in the $C_5$-component. This gives us that $\N(C_4; N(c_1)) \geq 2$, and therefore $\nu(H_{c_1}) \leq -1$, contradicting assumption \eqref{bigass}.
\end{proof}

We can now conclude this study of subgraphs with $\nu$-value zero and cycles of length four by the following property. It claims that the only way to have cycles of length four in a graph with $\nu$-value zero is to be one of the known graphs with 4-cycles in $\mathcal{G}$.

\begin{property}
 \label{classprop6}
If $H \leq G$ is connected, $\nu(H) \leq 0$ then exactly one of the following three statements holds.
\begin{enumerate}[(i)]
 \item \label{i1} $H \iso Ch_k$ for some $k \geq 2$.
 \item \label{i2} $H \iso BC_k$ for some $k \geq 5$.
 \item \label{i3} $\N(C_4; H) = 0$ and $H \not \iso C_5$.
\end{enumerate}
\end{property}
\begin{proof}
Suppose that neither (\ref{i1}) nor (\ref{i2}) is true. Then we must have $\delta(H) \geq 3$ by Properties \ref{prop-mindeg2} and \ref{containsChk1}.

We will prove that no cycle of length four contains a vertex of valency $d$ for all $d \geq 3$ by induction on $d$. This holds for $d = 3, 4, 5$ by Properties \ref{classprop2}, \ref{classprop4} and \ref{classprop5}. Suppose therefore that $d \geq 6$ and that there are no cycles of length four through any vertices with valency less than $d$.

Now, suppose that $v \in V(H)$ has valency $d$ and is such that $\N(C_4; v) > 0$. Then $d^2(v) \geq 2 \cdot d + (d - 2) \cdot 4$ since its two neighbours in the cycle of length four would have valency at least $d$ and by Property \ref{classprop3} the remaining $d-2$ neighbours have valency at least four. Then by Property \ref{prop-removalcount} we get $\nu(H_v) \leq \nu(H) - 3d^2(v) + 17d - 18 - (1) \leq \nu(H) - d + 5 \leq -1$, contradicting assumption \eqref{bigass}. The conclusion now follows by induction.
\end{proof}

\begin{property}
  \label{classprop7}
  If $H \leq G$ is connected, $\nu(H) = 3$, $\alpha(H_2) > 1$ and $\N(C_4; H) = 0$, then there is an edge $e \in E(H)$ such that $H - e = C_5 + H'$, where $H' \in \{C_5,W_5,(2C_7)_{2i}\}$.
\end{property}
\begin{proof}
Suppose first that $H$ is not edge-critical, then there is an edge $e \in E(H)$ such that $\nu(H - e) = 0$. $H - e$ contains vertices that are at most bivalent so it contains a $C_5$-component. It can clearly not be the only component. Hence $H - e = C_5 + H'$ where $\nu(H') = 0$, and the conclusion follows from assumption \eqref{bigass}.

On the other hand, if $H$ is edge-critical we get the following. Let $v \in V(H)$ be some bivalent vertex. We must have that $d^2(v) \leq 5$, otherwise $\delta(H_v) \leq 2$ (since $\alpha(H_2) > 1$) and $|\Cc(H_v)| = 1$ (by Lemma \ref{ecbvconn}) which would imply that $H_v \iso C_5$, because $\nu(H_v) \leq 1$. We would then get several cycles of length four through the neighbourhood of $v$, yielding a contradiction to assumption \eqref{bigass}.

Since $H$ is not 2-regular there is a bivalent vertex $v_1$ in $H$ with a trivalent neighbour. Suppose the bivalent neighbour of $v_1$, say $v_2$, does not have second valency five. Let $\{u\} = N(v_2) \setminus \{v_1\}$, $\{t_1\} = N(v_1) \setminus \{v_2\}$ and $\{u',t_2\} = N(t_1) \setminus \{v_1\}$. See Figure \ref{cp7p1} for an illustration of the situation.
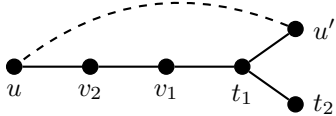
\begin{figure}[h]
\begin{center}
\begin{tikzpicture}[scale=1]
  \GraphInit[vstyle=Classic]
  \renewcommand*{\VertexSmallMinSize}{4pt}
  \Vertex[Math,x=0,y=0,Lpos=-90]{u}
  \Vertex[Math,x=1,y=0,Lpos=-90]{v_2}
  \Vertex[Math,x=2,y=0,Lpos=-90]{v_1}
  \Vertex[Math,x=3,y=0,Lpos=-90]{t_1}
  \Vertex[Math,x=3.7,y=-0.5]{t_2}
  \Vertex[Math,x=3.7,y=0.5]{u'}
  \Edges(u,v_2,v_1,t_1,u')
  \Edge(t_1)(t_2)
  \Edge[style={dashed,bend right}](u')(u)
\end{tikzpicture}
\end{center}
\caption{The situation in the neighbourhood of $v_1$ and $v_2$ in the case that $v_2$ does not have second valency 5.}
\label{cp7p1}
\end{figure}

Now, $u$ belongs to a $P_2$-component of $H_{v_1}$ by Property 9, so the edge $t_1t_2 \in E(H)$ would be redundant since the component, $C$, containing $t_2$ in $H_{v_1}$ would have $\nu(H) = 0$ and thus be 2-stable by Property \ref{prop1}. This contradicts the edge-criticality of $H$.

Hence, we have that $H_2$ only consists of $P_2$-components. Furthermore $H_2$ has at least two such components since $\alpha(H_2) > 1$. Say that $a_1$ and $a_2$ forms another such component and give names to the vertices in their neighbourhoods according to Figure \ref{cp7p2}. Note that the vertices $x_i$ and the vertices $c_i$ need not necessarily be distinct.
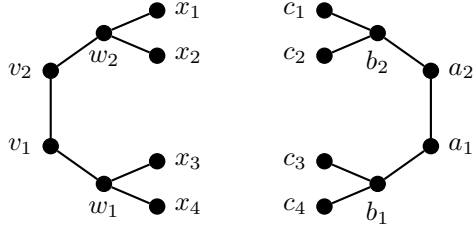
\begin{figure}[h]
\begin{center}
\begin{tikzpicture}[scale=1]
  \GraphInit[vstyle=Classic]
  \renewcommand*{\VertexSmallMinSize}{4pt}
  \Vertex[Math,x=0,y=0,Lpos=-180]{v_1}
  \Vertex[Math,x=0,y=1,Lpos=-180]{v_2}
  \Vertex[Math,x=0.7,y=-0.5,Lpos=-90]{w_1}
  \Vertex[Math,x=0.7,y=1.5,Lpos=-90]{w_2}
  \Vertex[Math,x=1.4,y=1.8]{x_1}
  \Vertex[Math,x=1.4,y=1.2]{x_2}
  \Vertex[Math,x=1.4,y=-0.2]{x_3}
  \Vertex[Math,x=1.4,y=-0.8]{x_4}
  \Edges(x_4,w_1,v_1,v_2,w_2,x_1)
  \Edge(w_2)(x_2)
  \Edge(w_1)(x_3)
  \Vertex[Math,x=5,y=0]{a_1}
  \Vertex[Math,x=5,y=1]{a_2}
  \Vertex[Math,x=4.3,y=-0.5,Lpos=-90]{b_1}
  \Vertex[Math,x=4.3,y=1.5,Lpos=-90]{b_2}
  \Vertex[Math,x=3.6,y=1.8,Lpos=-180]{c_1}
  \Vertex[Math,x=3.6,y=1.2,Lpos=-180]{c_2}
  \Vertex[Math,x=3.6,y=-0.2,Lpos=-180]{c_3}
  \Vertex[Math,x=3.6,y=-0.8,Lpos=-180]{c_4}
  \Edges(c_4,b_1,a_1,a_2,b_2,c_1)
  \Edge(b_2)(c_2)
  \Edge(b_1)(c_3)
\end{tikzpicture}
\end{center}
\caption{Names of the vertices in the $H$-neighbourhoods of the two $P_2$-components of $H_3$.}
\label{cp7p2}
\end{figure}

If $d(x_1) = 2$ then $d(H_{v_2}; x_1) \leq 1$ so $x_1$ would belong to a $P_2$-component of $H_{v_2}$, while $x_2$ and $w_1$ would belong to components that have $\nu$-value zero and therefore belong to $\mathcal{G}$ by assumption \eqref{bigass}. This is not possible however, since by Property \ref{prop1} these components are 2-stable.

Hence,
\begin{equation}
\label{newstar}
d(x_i),d(c_i) \geq 3 \quad (\forall i \in [4]).
\end{equation}

Suppose that $d^2(H_{v_1}; w_2) \geq 6$. Then $\nu(H_{v_1,w_2}) \leq 2$ so $a_1$ and $a_2$ belong to a $C_5$-component, say $C$, of $H_{v_1,w_2}$. All vertices of $C$ except for $a_1$ and $a_2$ have valency at least three in $H$ and so each of the vertices $w_1$, $x_1$ and $x_2$ must have a neighbour in $V(C)$. This leaves three other edges in $E(N[\{v_1,w_2\}], V(H) \setminus N[\{v_1,w_2\}])$, so if $C' \in \Cc(H_{v_1,w_2}) \setminus \{C\}$ we must have that $\delta(C') \leq 2$ or otherwise $C'$ would be 3-stable by Property \ref{prop3}. But then also $C' \iso C_5$, whence $H_{v_1,w_2} \iso C_5 + C_5$, which gives $\nu(H_{v_1,w_2}) = 0$. This would however require that $\N(C_4;N(v_1) \cup N(w_2)) \geq 2$, which contradicts $\N(C_4;H) = 0$.

Hence, $d^2(H_{v_1}; w_2) \leq 5$ and analogously $d^2(H_{v_2}; w_1) \leq 5$.

Now, by \eqref{newstar} there must therefore be a cycle of length five through $v_1$ and $v_2$. Analogously we get one through $a_1$ and $a_2$. We may therefore assume that the situation looks like illustrated in Figure \ref{cp7p3}.
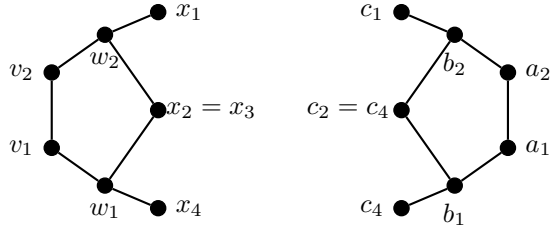
\begin{figure}[h]
\begin{center}
\begin{tikzpicture}[scale=1]
  \GraphInit[vstyle=Classic]
  \renewcommand*{\VertexSmallMinSize}{4pt}
  \Vertex[Math,x=0,y=0,Lpos=-180]{v_1}
  \Vertex[Math,x=0,y=1,Lpos=-180]{v_2}
  \Vertex[Math,x=0.7,y=-0.5,Lpos=-90]{w_1}
  \Vertex[Math,x=0.7,y=1.5,Lpos=-90]{w_2}
  \Vertex[Math,x=1.4,y=1.8]{x_1}
  \Vertex[Math,x=1.4,y=0.5,NoLabel]{x_2}
  \Vertex[Math,x=1.4,y=-0.8]{x_4}
  \Edges(x_4,w_1,v_1,v_2,w_2,x_1)
  \Edge(w_2)(x_2)
  \Edge(w_1)(x_2)
  \Vertex[Math,x=6,y=0]{a_1}
  \Vertex[Math,x=6,y=1]{a_2}
  \Vertex[Math,x=5.3,y=-0.5,Lpos=-90]{b_1}
  \Vertex[Math,x=5.3,y=1.5,Lpos=-90]{b_2}
  \Vertex[Math,x=4.6,y=1.8,Lpos=-180]{c_1}
  \Vertex[Math,x=4.6,y=0.5,NoLabel]{c_2}
  \Vertex[Math,x=4.6,y=-0.8,Lpos=-180]{c_4}
  \Edges(c_4,b_1,a_1,a_2,b_2,c_1)
  \Edge(b_2)(c_2)
  \Edge(b_1)(c_2)
  \node at (2.1,0.5){$x_2 = x_3$};
  \node at (3.9,0.5){$c_2 = c_4$};
\end{tikzpicture}
\end{center}
\caption{The $H$-neighbourhoods of the two $P_2$-components of $H_3$.}
\label{cp7p3}
\end{figure}

Clearly $x_1 \neq x_4$ since we have no cycles of length four. We also have $d(x_1) = 3$ since $d^2(H_{v_1}; w_2) \leq 5$. Analogously we have $d(x_4) = d(c_1) = d(c_4) = 3$. In a similar manner we see that $d(x_2) = d(c_2) = 3$.

We have that $d^2(x_1) \leq 9$ since otherwise $\nu(H_{x_1,v_2}) \leq \nu(H) + 3 - 7 \leq -1$, contradicting assumption \eqref{bigass}. Suppose that $d^2(x_1) = 9$. We then instead get that $\nu(H_{x_1,v_2}) = \nu(H) + 6 - 7 = 2$. The vertices $a_1$ and $a_2$ are at most bivalent in $H_{x_1,v_2}$ and must therefore belong to some $C_5$-component, $C$, of $H_{x_1,v_2}$. The same goes for the vertex $w_1$ as well. If there are two $C_5$-components in $H_{x_1,v_2}$ then these are the only two components since $e(N[\{x_1,v_2\}], V(H) \setminus N[\{x_1,v_2\}]) \leq 6$. But then $\nu(H_{x_1,v_2}) = 0$, which would give us that $\N(C_4; N(a) \cup N(v_2)) = 2$, contradicting that $H$ contains no cycles of length four. Thus $a_1,a_2$ and $w_1$ must belong to the same $C_5$-component, i.e. we must have the situation illustrated in Figure \ref{cp7p4} (note in particular that $x_1$ and $x_4$ must have a common neighbour).

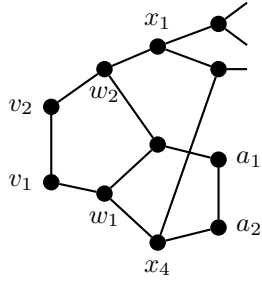
\begin{figure}[h]
\begin{center}
\begin{tikzpicture}[scale=1]
  \GraphInit[vstyle=Classic]
  \renewcommand*{\VertexSmallMinSize}{4pt}
  \Vertex[Math,x=0,y=0,Lpos=-180]{v_1}
  \Vertex[Math,x=0,y=1,Lpos=-180]{v_2}
  \Vertex[Math,x=0.7,y=-0.15,Lpos=-90]{w_1}
  \Vertex[Math,x=0.7,y=1.5,Lpos=-90]{w_2}
  \Vertex[Math,x=1.4,y=1.8,Lpos=90]{x_1}
  \Vertex[Math,x=2.2,y=2.1,NoLabel]{x11}
  \Vertex[Math,x=2.6,y=2.4,empty]{x111}
  \Vertex[Math,x=2.6,y=1.8,empty]{x112}
  \Vertex[Math,x=2.2,y=1.5,NoLabel]{x12}
  \Vertex[Math,x=2.6,y=1.5,empty]{x121}
  \Vertex[Math,x=1.4,y=0.5,NoLabel]{x_2}
  \Vertex[Math,x=1.4,y=-0.8,Lpos=-90]{x_4}
  \Edges(x_4,w_1,v_1,v_2,w_2,x_1)
  \Edge(w_2)(x_2)
  \Edge(w_1)(x_2)
  \Vertex[Math,x=2.2,y=0.3]{a_1}
  \Vertex[Math,x=2.2,y=-0.6]{a_2}
  \Edges(x_2,a_1,a_2,x_4)
  \Edge(x_1)(x11)
  \Edge(x_1)(x12)
  \Edge(x_4)(x12)
  \Edges(x111,x11,x112)
  \Edge(x12)(x121)
\end{tikzpicture}
\end{center}
\caption{The neighbourhood of $v_1,v_2,a_1,a_2$ in the case that $d^2(x_1) = 9$.}
\label{cp7p4}
\end{figure}

The three remaining edges in $E(N[\{x_1,v_2\}], V(H) \setminus N[\{x_1,v_2\}])$ must go to a component $C'$ such that $\delta(C') \leq 2$ by edge-criticality and Property \ref{prop3}. But then $C' \iso C_5$ and we would have two $C_5$-components, which we already have seen to be impossible.

Hence, $d^2(x_1) \leq 8$. This means in particular that $x_1$ has some bivalent neighbour, $t_1$, which in turn must have a bivalent neighbour $t_2$, in $H$. $t_1$ and $t_2$ are both monovalent in $H_{v_1,w_2}$ (by Property \ref{prop-leq6}) since $\nu(H_{v_1,w_2}) = \nu(H) + 1 + 1 = 5$. Clearly $t_2w_1 \notin E(H)$ so we must have that $t_2x_2 \in E(H)$. Since $d(x_2) = 3$ we get that $H_{v_1} \setminus \{t_1,t_2\}$ is a graph with $\nu$-value one. There are then only two edges in $E(T,H \setminus T)$ where $T = \{v_1,v_2,w_1,w_2,x_2,x_1,t_1,t_2\}$. This is not possible however since $H_{v_1} \setminus \{t_1,t_2\}$ is 2-stable (by Property \ref{prop1}).
\end{proof}

\begin{corollary}
\label{classcor7}
Let $H \leq G$ be connected, $\nu(H) \leq 0$ and $\N(C_4; H) = 0$. If $v \in V(H)$ is trivalent with $d^2(v) \geq 10$, then either $\delta(H_v) \geq 3$ or $\alpha((H_v)_2) = 1$.
\end{corollary}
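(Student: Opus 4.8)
The plan is to bound $\nu(H_v)$ from above, use this to pin down the components of $H_v$, and fall back on Property~\ref{classprop7} for the single residual case. First I would apply Property~\ref{prop-removalcount} with $d(v)=3$, $d^2(v)\geq 10$ and $\N(C_4;H,N(v))=0$ (which holds since $\N(C_4;H)=0$): this gives $\nu(H_v)\leq \nu(H)-3d^2(v)+51-18\leq 3$, and since $\nu(H_v)\geq 0$ by \eqref{bigass} it also forces $d^2(v)\leq 11$. I would record two further facts that will be used repeatedly: $H$ is $C_{\leq 4}$-avoiding (it is triangle-free and $\N(C_4;H)=0$), and $\delta(H)\geq 3$ --- indeed $\delta(H)\geq 2$ by Property~\ref{prop-mindeg2}, and if $\delta(H)=2$ then Property~\ref{containsChk1} gives $H\iso Ch_k$, whence $\N(C_4;H)=0$ forces $k=2$ and $H\iso C_5$, contradicting that $H$ has a trivalent vertex.

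The heart of the argument is the local observation $(\star)$: \emph{$H_v$ has no component $C$ containing a $5$-cycle $C^{*}$ at least four of whose vertices are bivalent in $C$.} To prove $(\star)$, note that $C$ is a component of $H_v = H\setminus N[v]$, so every edge of $H$ with exactly one endpoint in $V(C)$ has its other endpoint in $N(v)$; moreover no vertex of $V(C)$ has two neighbours in $N(v)$, since that would produce a $4$-cycle through $v$. Hence a vertex $x\in V(C^{*})$ with $d_C(x)=2$ has $d_H(x)\leq 3$, so $d_H(x)=3$ and $x$ has exactly one neighbour in $N(v)$. Summing over the $\geq 4$ such vertices gives at least four edges between $V(C^{*})$ and $N(v)$; but $|N(v)|=3$ and, by Lemma~\ref{nbrsincycle}, each vertex of $N(v)$ has at most $\lfloor 5/3\rfloor = 1$ neighbour on $C^{*}$, so there are at most three such edges --- a contradiction.

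Now I would assume $\delta(H_v)=2$ (otherwise $\delta(H_v)\geq 3$ and the first alternative holds) and fix a component $C$ of $H_v$ with $\delta(C)=2$; then $0\leq\nu(C)\leq\nu(H_v)\leq 3$ and $\N(C_4;C)=0$. If $\nu(C)\leq 1$, Property~\ref{containsChk1} gives $C\iso Ch_k$ for some $k\geq 2$; since $\nu(Ch_k)=0$ this forces $\nu(C)=0$, and then $\N(C_4;C)=0$ gives $k=2$, i.e. $C\iso C_5$, contradicting $(\star)$. If $\nu(C)=2$, Property~\ref{classprop1} gives $C\iso Ch_k$ or $C\iso SCh_\ell$, both impossible since $\nu(Ch_k)=0$ and $\N(C_4;SCh_\ell)=\ell\geq 1$. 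Hence $\nu(C)=3$, which forces $\nu(H_v)=3$ and every other component of $H_v$ to have $\nu$-value $0$; the same argument (Property~\ref{containsChk1} together with $(\star)$) shows each such other component has $\delta\geq 3$, so $C$ is the unique component of $H_v$ carrying bivalent vertices and $(H_v)_2 = C_2$. If $\alpha(C_2)=1$ we are done. Otherwise $\alpha(C_2)>1$, and Property~\ref{classprop7} applies to $C$, giving an edge $e$ with $C-e = C_5 + C'$ for some $C'\in\{C_5,W_5,(2C_7)_{2i}\}$; but then the $C_5$-summand of $C-e$ is an induced $5$-cycle of $C$ exactly one of whose vertices is non-bivalent in $C$, again contradicting $(\star)$. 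Therefore $\alpha((H_v)_2)=\alpha(C_2)=1$.

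The main obstacle is $(\star)$ itself and, relatedly, the realisation that the $\nu$-value-$3$ component is the only configuration of $H_v$ not already classified by the earlier properties: Property~\ref{classprop7} resolves it, but only under the hypothesis $\alpha(C_2)>1$, which is exactly why the conclusion must be stated as a dichotomy rather than as $\alpha((H_v)_2)=1$. Everything else is routine bookkeeping with Property~\ref{prop-removalcount} and the additivity of $\nu$ over connected components.
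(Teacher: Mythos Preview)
Your proof is correct and follows essentially the same strategy as the paper: bound $\nu(H_v)$ via Property~\ref{prop-removalcount}, classify the low-$\nu$ components containing bivalent vertices using Properties~\ref{containsChk1}, \ref{classprop1} and \ref{classprop7}, and rule out each resulting $C_5$-configuration by the pigeonhole observation that the three vertices of $N(v)$ cannot together supply neighbours to four or more vertices of a $5$-cycle without creating a $C_4$. Your abstraction of this last step into the single lemma $(\star)$, together with casework on $\nu(C)$ for a component $C$ of $H_v$ rather than on the pair $(d^2(v),\nu(H))$, makes the argument tidier---in particular you treat the possibility that $H_v$ is disconnected more explicitly than the paper's somewhat elliptical proof does---but the underlying ideas coincide.
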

\begin{proof}
  If $d^2(v) \geq 11$, then $\nu(H_v) \leq \nu(H) + 0 \leq 0$. By assumption \eqref{bigass} we would then get that either $\delta(H_v) \geq 3$ or $H_v$ contains $C_5$-components. It is however easily seen that $H_v$ can not contain $C_5$-components since $\N(C_4;H) = 0$.
  
  Hence, $d^2(v) = 10$.
  We must have that $H_v$ contains a least one bivalent vertex. If $\nu(H) < 0$ then $\nu(H_v) \leq 2$ and the conclusion follows form Property \ref{classprop1}. On the other hand if $\nu(H) = 0$, then $\nu(H) = 3$ and by Property \ref{classprop7} $H_v$ has at least four bivalent vertices in a cycle of length five if $\alpha((H_v)_2) > 1$. But if this were the case then one of $v$'s three neighbours would have to be adjacent to two vertices in a cycle of length five. This would contradict that $\N(C_4; H) = 0$.
\end{proof}

\begin{property}
\label{classprop8}
Suppose that $G$ is connected, $\nu(G) \leq 0$ and $\delta(G) = 3$. If $G \notin \{BC_k; k \geq 5\} \cup \{W_5, (2C_7)_{2i}\}$ then $G_3$ is 2-regular.
\end{property}
\begin{proof}
  Firstly, $\N(C_4; G) = 0$ by Proposition \ref{classprop6}. $G$ is not 3-regular by Property \ref{no3regular}.
  Note that $\delta(G_3) \geq 1$ since otherwise there would be a trivalent vertex $v$ in $G$ with second valency at least twelve. This would however give $\nu(G_v) \leq \nu(G) - 3 \leq -3$, contradicting assumption \eqref{bigass}.

  Now, suppose that $v \in V(G_3)$ is monovalent in $G_3$, then we instead get $\nu(G_v) \leq 0$. By assumption \eqref{bigass} we therefore have that all components of $G_v$ are in $\{C_5,W_5,$ $(2C_7)_{2i}\}$. We can easily see that it is not possible to have $C_5$-components in $G_v$ since then one of $v$'s neighbours would have to be adjacent to two vertices in that $C_5$-component, which would give a cycle of length four. So all components of $G_v$ are isomorphic to one of the two 3-regular components $W_5$ and $(2C_7)_{2i}$. The trivalent neighbour, $u$, of $v$ in $G$ would then have two tetravalent neighbours. Let $w_{11}$, $w_{12}$ denote the two tetravalent neighbours of $v$, and $w_{21}$, $w_{22}$ the two tetravalent neighbours of $u$. Analogously as for $G_v$ we can show that all components of $G_u$ must belong to $\{W_5,(2C_7)_{2i}\}$. This means, in particular, that the neighbours of $w_{ij}$ that are not $u$ or $v$ must be tetravalent in $G$ for all $i,j \in [2]$. But then we must have that all three vertices in $N(w_{11}) \setminus \{v\}$ are adjacent to $w_{21}$ or $w_{22}$. This would however yield a cycle of length four through $w_{11}$, contradicting $\N(C_4;G) = 0$.
  
  Hence we must have $\delta(G_3) \geq 2$. Suppose now that there is a bivalent vertex $v$ in $G_3$ with a trivalent neighbour, $u$. By Corollary \ref{classcor7} we must have that $\alpha((G_v)_2) = 1$, which would make the two neighbours of $u$ that are not $v$ adjacent, contradicting that $G$ is triangle-free.
\end{proof}

\begin{property}
  \label{classprop9}
  If $G$ is connected, $\nu(G) \leq 0$, $\N(C_4;G) = 0$, $\delta(G) = 3 \neq \Delta(G)$, then $G_3 \iso \left(\frac{|V(G_3)|}{5}\right)\cdot C_5$.
\end{property}
\begin{proof}
  By Property \ref{classprop8} the induced graph $G_3$ consists of 2-regular components. Clearly $G_3$ does not contain any $C_4$-components. If $G_3$ were to contain a cycle of length six or more, then let $v$ be a vertex of that cycle. We would then have $\delta(G_v) \leq 2$ with $\alpha((G_v)_2) > 1$, contradicting Corollary \ref{classcor7}.
\end{proof}

\begin{property}
  \label{classprop10}
  Suppose that $G$ is connected and $\nu(G) \leq 0$. If $G \notin \mathcal{G}$ then $G$ is 4-regular.
\end{property}
\begin{proof}
  Note that $\N(C_4; G) = 0$ by Property \ref{classprop6}. We have that $\delta(G) \geq 3$ by Property \ref{prop-mindeg2}. Moreover, we can conclude that $\Delta(G) \neq 3$ from Property \ref{no3regular}. Thus, Property \ref{classprop9} implies that $G_3$ consists only of $C_5$-components (possibly none).

  Suppose that $V(G_3)$ is non-empty, i.e. that $\delta(G) = 3$. We then have a cycle of length five on the vertices $v_1,v_2,v_3,v_4,v_5$ in $V(G_3)$. The neighbours of these vertices that are not in the cycle, say $w_i$ is adjacent to $v_i$, must all be tetravalent in $G$. The situation is therefore as illustrated in Figure \ref{cp10p1}.

\begin{figure}[h]
\begin{center}
\begin{tikzpicture}[scale=1]
  \GraphInit[vstyle=Classic]
  \renewcommand*{\VertexSmallMinSize}{4pt}
  \Vertices[Math,unit=1,Lpos=-90,Ldist=-1]{circle}{v_1,v_2,v_3,v_4,v_5}
  \Vertices[Math,unit=2,Lpos=90]{circle}{w_1,w_2,w_3,w_4,w_5}
  \begin{scope}[rotate=-5]
    \Vertices[Math,unit=2.4,empty]{circle}{u35,u7,u14,u21,u28}
  \end{scope}
  \begin{scope}[rotate=0]
    \Vertices[Math,unit=2.4,empty]{circle}{u1,u8,u15,u22,u29}
  \end{scope}
  \begin{scope}[rotate=5]
    \Vertices[Math,unit=2.4,empty]{circle}{u2,u9,u16,u23,u30}
  \end{scope}
  \Edges(v_1,v_2,v_3,v_4,v_5,v_1)
  \Edge(v_1)(w_1)
  \Edge(v_2)(w_2)
  \Edge(v_3)(w_3)
  \Edge(v_4)(w_4)
  \Edge(v_5)(w_5)
  \Edge(w_1)(u35)
  \Edge(w_1)(u1)
  \Edge(w_1)(u2)
  \Edge(w_2)(u7)
  \Edge(w_2)(u8)
  \Edge(w_2)(u9)
  \Edge(w_3)(u14)
  \Edge(w_3)(u15)
  \Edge(w_3)(u16)
  \Edge(w_4)(u21)
  \Edge(w_4)(u22)
  \Edge(w_4)(u23)
  \Edge(w_5)(u28)
  \Edge(w_5)(u29)
  \Edge(w_5)(u30)
\end{tikzpicture}
\end{center}
\caption{The neighbourhood of the $C_5$ from $G_3$ in $G$.}
\label{cp10p1}
\end{figure}
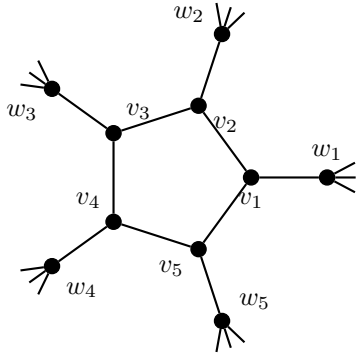

Clearly, $\dist(w_i,w_{i+1}) \geq 2$ for all $i \in [5]$ (taking indices modulo 5). On the other hand if $\dist(w_1,w_2) = 3$, then consider the graph $G' := G \setminus (\{v_i; i \in [5]\} \cup \{w_4\}) + w_1w_2$. We must have that $G'$ is triangle-free, $n(G') = n(G) - 6$, $e(G') = e(G) - 12$ and $\N(C_4;G') \leq 3$ since $e(N(w_1)\setminus v_1, N(w_2) \setminus v_2) \leq 3$. Moreover, $\alpha(G') < \alpha(G)$ since any independent set of $G'$ contains at most one of $w_1$ and $w_2$ and may therefore be extended to an independent set of size one more in $G$.

Similarly, any independent set of size $\alpha(G) - 1$ in $G \setminus \{v_i; i \in [5]\}$ must contain at least three consecutive $w_j$'s, or could otherwise be extended to an independent set of size $\alpha(G) + 1$ in $G$. The same goes for $G'' := G \setminus \{v_i; i \in [5]\} + w_1w_2$ since $\alpha(G'') \leq \alpha(G \setminus \{v_i; i \in [5]\})$. So in particular any independent set in $G''$ of size $\alpha(G) - 1$ must contain $w_4$, since it contains at most one of $w_1$ and $w_2$. Hence, $\nu(G') \leq \alpha(G) - 2$, which gives $\nu(G') \leq \nu(G) - 3\cdot 12 + 17 \cdot 6 - 35 \cdot 2 + (3) \leq -1$, contradicting assumption \eqref{bigass}.

Therefore, $\dist(w_1,w_2) = 2$ and by analogous arguments we have $\dist(w_i,w_{i+1}) = 2$ for all $i \in [5]$.

Suppose that $w_1w_3 \notin E(G)$ and that $\delta(G_{v_1,v_3}) \leq 2$. Then we would have $\nu(G_{v_1,v_3}) \leq \nu(G) + 3 - 2 \leq 1$, so $G_{v_1,v_3}$ contains a $C_5$-component. $w_1$ and $w_3$ are adjacent to at most one vertex in such a component, each. The remaining three vertices must then be $w_2,w_4$ and $w_5$. These vertices, however, are at least tetravalent in $G$, so all of them would have to be adjacent to $w_1$ or $w_2$, which would give a cycle of length three or four. This, and analogous arguments, gives us that $w_iw_{i+2} \notin E(G)$ implies $\delta(G_{v_i,v_{i+2}}) \geq 3$.

We must have that either $w_iw_{i+2},w_iw_{i-2} \in E(G)$ or $w_iw_{i+2}, w_iw_{i-2} \notin E(G)$ for all $i \in [5]$, since otherwise we may without loss of generality assume that $w_1w_3 \notin E(G)$ but $w_1w_4 \in E(G)$. In this case we would get $\nu(G_{v_1,v_3}) \leq 1$ but this would make $w_4$ bivalent in $G_{v_1,v_3}$, contradicting the previous.

Suppose that $w_iw_{i+2} \in E(G)$ for some $i \in [5]$. Then we would have to have $w_jw_{j+2} \in E(G)$ for all $j \in [5]$, i.e. the local structure is as illustrated in Figure \ref{cp10p2}.
\begin{figure}[h]
\begin{center}
\begin{tikzpicture}[scale=1]
  \GraphInit[vstyle=Classic]
  \renewcommand*{\VertexSmallMinSize}{4pt}
  \Vertices[Math,unit=1,Lpos=-180,Ldist=-2]{circle}{v_1,v_2,v_3,v_4,v_5}
  \Vertices[Math,unit=2,Lpos=90]{circle}{w_1,w_2,w_3,w_4,w_5}
  \Vertices[Math,unit=2.4,empty]{circle}{u1,u8,u15,u22,u29}
  \Edges(v_1,v_2,v_3,v_4,v_5,v_1)
  \Edge(v_1)(w_1)
  \Edge(v_2)(w_2)
  \Edge(v_3)(w_3)
  \Edge(v_4)(w_4)
  \Edge(v_5)(w_5)
  \Edge(w_1)(u1)
  \Edge(w_2)(u8)
  \Edge(w_3)(u15)
  \Edge(w_4)(u22)
  \Edge(w_5)(u29)
  \Edge[style={bend right}](w_1)(w_3)
  \Edge[style={bend right}](w_3)(w_5)
  \Edge[style={bend right}](w_5)(w_2)
  \Edge[style={bend right}](w_2)(w_4)
  \Edge[style={bend right}](w_4)(w_1)
\end{tikzpicture}
\end{center}
\caption{The neighbourhood of the $C_5$ from $G_3$ in $G$ with edges between $w_i$s.}
\label{cp10p2}
\end{figure}
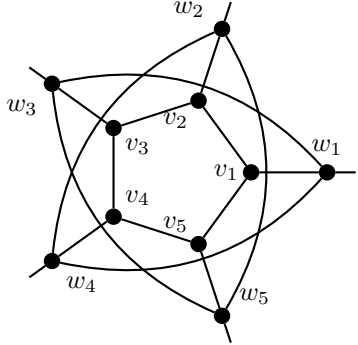

In this case it is impossible to have three consecutive $w_i$s in an independent set of $G\setminus \{v_i; i \in [5]\}$. This means that $\alpha(G \setminus \{v_i; i \in [5]\}) \leq \alpha(G) - 2$, which would make all the $v_iw_i$-edges redundant. Therefore we must have that $w_iw_{i+2} \notin E(G)$ for all $i \in [5]$.

If $d^2(w_i) \geq 16$ for some $i \in [5]$ then $v_i$ is the only trivalent neighbour of $w_i$. We would then have $\nu(G_{w_i}) \leq \nu(G) + 2 \leq 2$, with $v_{i-2}$ and $v_{i+2}$ bivalent. It is easy to see that this is not possible since then they would have to belong to a $C_5$-component. We must therefore have that $d^2(w_i) = 3 + 4 + 4 + 4 = 15$ for all $i \in [5]$.

Suppose that $d^2(G_{w_1,v_2}; v_4) = 6$. Then $\nu(G_{w_1,v_2,v_4}) \leq 1$, with $w_5$ bivalent (since $N(w_1) \cap N(w_5) \neq \emptyset$ and $N(v_4) \cap N(w_5) = \{v_5\}$). This implies that $w_5$ belongs to a $C_5$-component, $C$, of $G_{w_1,v_2,v_4}$. By the previous, however, $V(C)$ contains at least three vertices that are tetravalent in $G$. Hence, $e(C,N(w_1) \cup N(v_2) \cup N(v_4) \setminus \{v_1\}) \geq 8$, but $|N(w_1) \cup N(v_3) \cup N(v_4) \setminus \{v_1\}| \leq 7$, which would therefore give a cycle of length four in $G$. Hence, $d^2(G_{w_1,v_2}; v_4) \leq 5$. Since $e(w_4,N(v_2)) = 0$ we get $N(w_1) \cap N(w_4) \neq \emptyset$. Completely analogously we may show that $N(w_i) \cap N(w_{i+2}) \neq \emptyset$ for all $i \in [5]$.

Now, note that $G_3 = \emptyset$, since $\nu(G_{v_1,v_3}) \leq \nu(G) + 3 - 2 \leq 1$, and the common neighbour of $w_1$ and $w_3$ would have valency 2 in $G_{v_1,v_3}$, contradicting that $w_iw_{i+2} \notin E(G)$ implies $\delta(G_{v_i,v_{i+2}}) \geq 3$ for all $i \in [5]$.

Thus $\delta(G) \geq 4$ and $G$ is 4-regular by Property \ref{mindegleq4} and since $d^2(v) \leq 16$ for all $v \in V(G_4)$.
\end{proof}

\section{Properties of a minimal counterexample}
\label{mincounter} 

We will from here on assume, for a contradiction, that $G$ is a minimal counterexample to the assertion in Theorem \ref{mainthm}, i.e. we assume that either $\nu(G) < 0$ or $\nu(G) = 0$ but $G \notin \mathcal{G}$ but still assuming \eqref{bigass}. If $\nu(G) < 0$ then we in fact have $\nu(G) \leq -1$ since $\nu$ only takes integer values. In particular, $G$ must be a connected graph. Note that this is just a strengthening of assumption \eqref{bigass}, so all the properties for $G$ derived thus far holds also under this stronger assumption on $G$.

Note that by Property \ref{classprop6} we must have $\N(C_4;G) = 0$. Also, $G$ is 4-regular by Property \ref{classprop10}.

\subsection{Graphs with valencies three and four and $\nu$-value two}
\label{graphswithvalencies}

A lot of things in this section are quite close to the works of Radziszowski and Kreher in \cite{radziszowski-kreher91} and the slight modification by Backelin in \cite{jart}. Some of the following results are just reformulations of their results in our particular context.

\begin{lemma}
\label{monovalentsH3}
If $H$ is a graph such that $\delta(H) = 3$, $\Delta(H) \leq 4$, $\nu(H) \leq 2$ and $H \leq G$, then $\delta(H_3) \geq 1$, and every $v \in V(H)$ such that $d(H_3;v) = 1$ belongs to a $K_2$-component of $H_3$ and has no trivalent vertices at distance two from $v$ in $H$.
\end{lemma}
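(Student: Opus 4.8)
The plan is as follows. For the first assertion, suppose some trivalent vertex $v$ of $H$ has no trivalent neighbour; then all three neighbours are tetravalent (using $\Delta(H)\le 4$), so $d^2(v)=12$, and since $H\le G$ contains no cycle of length four (as $\N(C_4;G)=0$ and $G$ is triangle-free) we have $\N(C_4;H,N(v))=0$. Property \ref{prop-removalcount} then gives $\nu(H_v)\le \nu(H)-36+51-18\le -1$, contradicting \eqref{bigass}. Hence $\delta(H_3)\ge 1$.

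For the remaining assertions, fix a trivalent $v$ with $d(H_3;v)=1$, let $u$ be its unique trivalent neighbour and $a,b$ its two tetravalent neighbours. Then $d^2(v)=11$, and again $\N(C_4;H,N(v))=0$, so Property \ref{prop-removalcount} gives $\nu(H_v)\le\nu(H)\le 2$, whence $\delta(H_v)\ge 2$ by Property \ref{prop-mindeg2}. I will show that no vertex of $H$ is both trivalent and at distance exactly two from $v$. The $K_2$-component claim then follows at once: $N(u)\setminus\{v\}$ consists of vertices at distance two from $v$ (they cannot be adjacent to $v$, else $H$ has a triangle), hence of tetravalent vertices, so $u$'s only trivalent neighbour is $v$, making $\{u,v\}$ a component of $H_3=H[V_3(H)]$ isomorphic to $K_2$.

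So suppose $w$ is trivalent with $\dist_H(v,w)=2$. Then $w\notin N[v]$, so $w\in V(H_v)$, and $d_{H_v}(w)=3-|N_H(w)\cap N(v)|$; since $H$ has girth at least five, $w$ is adjacent to exactly one vertex of $N(v)$ (two such neighbours would close a $4$-cycle through $v$), so $w$ is a bivalent vertex of $H_v$. Let $C$ be its component in $H_v$. Then $\delta(C)=2$ and $\nu(C)\le\nu(H_v)\le 2$ by \eqref{bigass}, so Property \ref{classprop1} forces $C\iso Ch_k$ for some $k\ge 2$ or $C\iso SCh_\ell$ for some $\ell\ge 1$. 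But $C$, being an induced subgraph of $H$, has no cycle of length four, whereas $Ch_k$ for $k\ge 3$ and every $SCh_\ell$ contain a $4$-cycle: in the notation of Definition \ref{defChk}, $Ch_3$ has a $4$-cycle on the bivalent vertex $x$ of $C_5$, its two $C_5$-neighbours, and the vertex $w_2$ joined to both of them, and each $SCh_\ell$ contains $Ch_3$ as a subgraph. Hence $C\iso Ch_2=C_5$.

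Finally I rule out $C\iso C_5$ by an edge count. Since $u,a,b$ are pairwise non-adjacent ($H$ is triangle-free), the only edges inside $N[v]$ are $vu,va,vb$, so $e(N[v],V(H_v))=d(v)+d(u)+d(a)+d(b)-2\cdot 3=8$, all emanating from $\{u,a,b\}$. Every vertex of the $C_5$-component $C$ has two neighbours inside $C$ and, as $\delta(H)=3$, at least one further neighbour, necessarily in $\{u,a,b\}$; hence $e(V(C),\{u,a,b\})\ge 5$. On the other hand each of $u,a,b$ sends at most one edge into $V(C)$, since two edges from (say) $a$ to non-adjacent vertices of $C_5$ would create a $4$-cycle through $a$ and their common $C_5$-neighbour; thus $e(V(C),\{u,a,b\})\le 3$, a contradiction. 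Therefore no such $w$ exists, which completes the proof. I do not expect a serious obstacle here: the decisive observation is that $G$, hence $H$ and all its induced subgraphs, has girth at least five, after which Property \ref{classprop1} pins down the component of a putative trivalent distance-two vertex to a $5$-cycle and the edge count finishes it; the only points needing care are verifying that $Ch_k$ ($k\ge 3$) and the $SCh_\ell$ really contain a $4$-cycle, and the bookkeeping giving $e(N[v],V(H_v))=8$.
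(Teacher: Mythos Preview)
Your proof is correct and follows essentially the same route as the paper's: both use Property~\ref{prop-removalcount} to get $\nu(H_v)\le 2$, invoke Property~\ref{classprop1} on the component of a putative bivalent vertex of $H_v$, and then rule out a $C_5$-component by the pigeonhole/edge-count argument against the three vertices of $N(v)$ (each can touch the $C_5$ at most once since $G$ has girth $\ge 5$). You are in fact more explicit than the paper in spelling out why $Ch_k$ for $k\ge 3$ and $SCh_\ell$ are excluded by the absence of $4$-cycles, which the paper leaves implicit when it jumps from Property~\ref{classprop1} straight to ``lies in a $C_5$-component''.
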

\begin{proof}
We prove this by induction on the number of vertices of $H$. For $n(H) = 1$ the collection of graphs satisfying the properties in the premise is empty, whence the assertion trivially holds. Therefore suppose that $H$ satisfies the premises, $n(H) > 1$ and that the assertion holds for all graphs on fewer vertices than $H$.

Since $\nu(H) \leq 2$ we have that every trivalent vertex $v$ in $H$ has second valency at most eleven, since otherwise $\nu(H_v) \leq \nu(H) - 3 \leq -1$. Hence $\delta(H_3) \geq 1$.

Suppose that $v \in V(H_3)$ with $d(H_3;v)=1$, then $d^2(H;v) = 11$. Note that $\nu(H_v) \leq \nu(H) + 0 = 2$ and therefore we have, by Property \ref{classprop1}, that any bivalent vertex of $H_v$ would lie in a $C_5$-component. However, if there were a $C_5$-component, $C$, in $H_v$ then we would need to have that all five vertices of $C$ is adjacent to some vertex in $N(v)$. This would however give a cycle of length four through some of the vertices in $N(v)$. Thus there are no bivalent vertices in $H_v$ and therefore no trivalent vertices at distance two from $v$ in $H$. Hence if $v \in V(H_3)$ is such that $d(H_3; v) = 1$ then all vertices at distance two from $v$ are tetravalent. This means that $v$ belongs to a $K_2$-component of $H_3$.
\end{proof}

\begin{lemma}
  \label{isittrue}
If $H \leq G$ is such that $\delta(H) = 2$, $v \in V(H_2)$ and $\nu(H) \leq 5$ then one of the following holds
\begin{enumerate}[(i)]
\item $d^2(H; v) \geq 5$.
\item $v \in V(C)$ where $C_5 \iso C \in \calC(H)$.
\item $\exists e \in E(H): v \in V(C)$ where $C_5 \iso C \in \calC(H-e)$.
\end{enumerate}
\end{lemma}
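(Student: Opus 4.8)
The plan is to eliminate cases (i) and (ii) and then exhibit the edge required for (iii). First I would note that if $d^2(H;v)\le 4$ then, since $\delta(H)=2$ forces both neighbours of $v$ to be bivalent, in fact $d^2(H;v)=4$ and the two neighbours $w_1,w_2$ of $v$ are bivalent; so I assume this, and I assume $v$ lies in no $C_5$-component of $H$. Recall that $\N(C_4;G)=0$ in the present setting, so $H$ is triangle-free of girth at least $5$. I would then pass to the component $C$ of $H$ containing $v$: every other component of $H$ is a proper subgraph of $G$ and hence has non-negative $\nu$-value by \eqref{bigass}, so $\nu(C)\le\nu(H)\le 5$; moreover $C\not\cong C_5$ (otherwise (ii) would hold), $\delta(C)=2$, and $C\le G$. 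It then suffices to find $e\in E(C)$ for which $v$ lies in a $C_5$-component of $C-e$, as that is also a $C_5$-component of $H-e$.

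Next I would analyse the run of bivalent vertices through $v$. Since both neighbours of $v$ are bivalent, $v$ has degree two in the subgraph of $C$ induced on $V_2(C)$, so it lies on a cycle or a path of that subgraph. A cycle would be a $2$-regular component of $C$, forcing $C\cong C_m$; triangle-freeness and girth $\ge 5$ give $m\ge 6$ (as $C\not\cong C_5$), whence $\nu(C)\ge\nu(C_6)=21>5$, which is impossible (this is the computation behind Property \ref{prop-no2regular}). So $v$ lies on a path $q_1,q_2,\dots,q_s$ of bivalent vertices of $C$, with $v=q_i$, $2\le i\le s-1$ (so $s\ge 3$); each endpoint $q_1,q_s$ has, besides its neighbour on the path, exactly one further neighbour — call them $x$ and $y$ — of valency at least $3$ in $C$, and $x\ne y$ whenever that would produce a cycle of length at most $4$.

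The core is then a short case analysis on $s$ using Properties \ref{prop-removalcount}, \ref{prop-mindeg2} and \ref{prop-leq6} applied to $C_{q_1}$, the removal of the closed neighbourhood $\{q_1,q_2,x\}$. Since $q_3$ becomes monovalent in $C_{q_1}$ we have $\delta(C_{q_1})=1$, while Property \ref{prop-removalcount} gives $\nu(C_{q_1})\le\nu(C)+10-3\,d_C(x)$ (using $\N(C_4;C)=0$); hence $d_C(x)\ge 4$ would give $\nu(C_{q_1})\le 3$, contradicting Property \ref{prop-mindeg2}. So $d_C(x)=3$ and $\nu(C_{q_1})\le 6$, and Property \ref{prop-leq6} forces $C_{q_1}=H''+K_2$ with the monovalent vertex $q_3$ in the $K_2$-component; in particular the unique $C_{q_1}$-neighbour of $q_3$ is monovalent there as well. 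Running through the possibilities for that neighbour: $s=3$ is impossible, since then the neighbour is $y$, and monovalency of $y$ in $C_{q_1}$ forces every $C$-neighbour of $y$ other than $q_3$ into $\{q_1,q_2,x\}$, leaving only $d_C(y)\le 2$; $s\ge 5$ is impossible, since then the neighbour is $q_4$ and its second $C$-neighbour $q_5$ survives in $C_{q_1}$, so $q_4$ keeps degree $2$; and $s=4$ forces $y=x$, since $q_4$ must lose its second $C$-neighbour $y$, so $y\in\{q_1,q_2,x\}$ and, having valency $\ge 3$, $y=x$. In the surviving configuration $\{x,q_1,q_2,q_3,q_4\}$ induces a $5$-cycle in $C$ (no chords, by girth $\ge 5$), and since $d_C(x)=3$ exactly one edge, say $xz$, leaves this set; thus $\{x,q_1,q_2,q_3,q_4\}$ is a $C_5$-component of $C-xz$ containing $v=q_i\in\{q_2,q_3\}$, which is (iii).

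I expect the main obstacle to be the bookkeeping in this last case analysis: verifying that the removed closed neighbourhood is exactly $\{q_1,q_2,x\}$, that the relevant vertices are distinct and that $x\ne y$ in the short cases (both using girth $\ge 5$), that the surviving $C_{q_1}$-neighbour of $q_3$ is indeed the one claimed, and that $C$, $C_{q_1}$ and the candidate $C_5$ are genuinely non-empty subgraphs of $G$ so that \eqref{bigass} and Properties \ref{prop-mindeg2}, \ref{prop-leq6} apply. A secondary point is to make the reduction to the connected case airtight, observing that since $e\in E(C)$ a $C_5$-component of $C-e$ is still a $C_5$-component of $H-e$.
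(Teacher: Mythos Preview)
Your proposal is correct and follows essentially the same line as the paper's proof: both reduce to $d^2(v)=4$, locate $v$ inside a maximal path of bivalent vertices, remove an endpoint, and use Property~\ref{prop-leq6} to force a $K_2$-component, which pins down a path of length four whose outer neighbours coincide into a trivalent $x$, yielding the pendant edge $e$. The only cosmetic differences are that you pass to the connected component first and treat the length-$3$ case via the same $K_2$-component mechanism, whereas the paper disposes of it by a separate direct estimate $\nu(H_{u,w})\le -1$; your handling is equally valid in the ambient setting where $\N(C_4;G)=0$.
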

\begin{proof}
Let $H$ and $v$ be as in the premises and suppose that $d^2(H;v) \leq 4$ and that $v$ does not lie in a $C_5$-component of $H$. We then want to show that (iii) holds. By Property \ref{prop-no2regular} $H$ contains no 2-regular components and therefore neither does $H_2$.

Since $\delta(H) = 2$ we get that $d^2(H;v) = 4$, whence $v$ has two bivalent neighbours, $u$ and $w$. If neither $u$ nor $w$ has a bivalent neighbour apart from $v$, then $\nu(H_u) \leq \nu(H) + 1 \leq 6$. Since $e(N(u),N(w))\leq 1$ the monovalent vertex $w$ has second valency at least two in $H_u$, whence $\nu(H_{u,w}) \leq \nu(H_u) - 7 \leq -1$, contradicting assumption \eqref{bigass}.

Hence, at least one of $u$ and $w$ has a bivalent neighbour and $v$ belongs to a path component, $P$, of $H_2$ of length at least four. Let $a,b \in V(P)$ be the endpoints of the path component $P$. If $d^2(H;a) \geq 6$ then $\nu(H_a) \leq \nu(H) - 2 \leq 3$ but there would be a monovalent vertex (the vertex at distance two from $a$ in $P$) in $H_a$, contradicting Property \ref{prop-mindeg2}. Hence, $d^2(H;a) = 5$ and analogously we obtain $d^2(H;b) = 5$. Therefore $\nu(H_a) \leq \nu(H) + 1 \leq 6$ and the vertex at distance two from $a$ in $P$ must belong to a $K_2$-component of $H_a$ by Property \ref{prop-leq6}. This is only possible if $b$ is at distance three from $a$, in $P$, and the trivalent neighbours of $a$ and $b$, say $x$, coincide. The edge $e = xy \in E(H)$ that is neither incident to $a$ nor $b$ is then such that (iii) holds.
\end{proof}

In the remainder of this section we assume $H = G_v$ for some vertex $v \in V(G)$. In particular we then have that $H_3$ is a graph on 12 vertices since $G$ is 4-regular by Property \ref{classprop10}, and by Lemma \ref{monovalentsH3} we have that the minimum valency in $H_3$ is one and every monovalent vertex belongs to some $K_2$-component. Every vertex of $H$ which is not trivalent is tetravalent. 

\begin{lemma}
  \label{nofivecycles}
$H_3$ contains no cycles of length five.
\end{lemma}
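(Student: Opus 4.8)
Recall the standing assumptions at this point: $G$ is a $4$-regular, triangle-free, $C_{\leq 4}$-free minimal counterexample, $H = G_v$ for some $v \in V(G)$, $H_3$ has exactly $12$ vertices, $\delta(H_3) \geq 1$, every monovalent vertex of $H_3$ lies in a $K_2$-component and has only tetravalent vertices at distance two in $H$, and $\nu(H) = \nu(G_v) \leq \nu(G) - 3d^2(v) + 17d(v) - 18 \leq 0 - 3 \cdot 16 + 68 - 18 = 2$ by Property \ref{prop-removalcount} (using $4$-regularity and $\N(C_4;G)=0$). The plan is to suppose, for contradiction, that $H_3$ contains a $5$-cycle $C = c_1 c_2 c_3 c_4 c_5$ and derive a contradiction by removing a carefully chosen vertex (or vertex plus one neighbour) and tracking how much $\nu$ can increase, using the $\nu$-removal estimate of Property \ref{prop-removalcount} together with the structural results (Properties \ref{classprop1}, \ref{tretton}, \ref{containsChk1}, Corollary \ref{kor1}) about subgraphs of $G$ with small $\nu$-value.

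**Key steps.** First I would pin down the local structure around $C$: each $c_i$ is trivalent in $H$, hence has exactly one neighbour $w_i$ outside $C$ (since $c_{i-1}, c_{i+1} \in C$ are its other two neighbours). Because $G$ is $C_{\leq 4}$-free, the $w_i$ are pairwise distinct and no $w_i$ equals any $c_j$; also $\dist(w_i, w_{i+1}) \geq 2$. Next, since $C$ is a $5$-cycle of trivalent vertices of $H$, removing $v$ from $G$ and then one vertex $c_i$ of $C$ from $H$: we have $d_H(c_i) = 3$, $d_H^2(c_i) = d_H(c_{i-1}) + d_H(c_{i+1}) + d_H(w_i) = 3 + 3 + d_H(w_i)$. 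Consider $\nu(H_{c_i})$. By Property \ref{prop-removalcount}, $\nu(H_{c_i}) \leq \nu(H) - 3 d_H^2(c_i) + 17\cdot 3 - 18 - \N(C_4;H,N(c_i)) = \nu(H) + 15 - 3 d_H^2(c_i) \leq 2 + 15 - 3(6 + d_H(w_i)) = -1 - 3 d_H(w_i) \leq -4$, since $d_H(w_i) \geq 1$. This already contradicts assumption \eqref{bigass} — provided $d_H(w_i) \geq 1$, i.e. provided $w_i$ is a genuine vertex of $H$, which it is. So the contradiction is essentially immediate; the only subtlety is making sure no degeneracy (e.g. two of the $c_i$ coinciding, or $C$ not being an induced $5$-cycle) invalidates the degree count.

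**The main obstacle.** The one place where care is needed is confirming that $C$ is induced and that the degree computation $d_H^2(c_i) = 6 + d_H(w_i)$ is legitimate: a priori $C$ could be a $5$-cycle in the (possibly non-induced) sense, but since $H \leq G$ and $G$ is triangle-free, any $5$-cycle is automatically chordless (a chord would create a triangle or a $4$-cycle), so the three neighbours of $c_i$ in $H$ are exactly $c_{i-1}, c_{i+1}, w_i$ and these are distinct. Also I should note $N(c_i) \cap C = \{c_{i-1}, c_{i+1}\}$ uses triangle-freeness; and that $w_i \notin \{c_{i-1}, c_{i+1}\}$ and $w_i \neq c_i$ for the same reason. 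Once this is in place, the removal estimate above is the whole proof.

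\begin{proof}
Suppose, for a contradiction, that $H_3$ contains a cycle of length five, $C = c_1 c_2 c_3 c_4 c_5$, with indices taken modulo $5$. Each $c_i$ is trivalent in $H$. Since $G$ is triangle-free and $C_{\leq 4}$-avoiding, the cycle $C$ has no chords (a chord of a $5$-cycle would produce a triangle or a $4$-cycle in $G$), so the neighbours of $c_i$ in $H$ are precisely $c_{i-1}$, $c_{i+1}$ and a third vertex $w_i$, where $w_i \notin V(C)$; in particular $w_i$ is a genuine vertex of $H$ and $d_H(w_i) \geq 1$. Hence
\begin{equation*}
  d^2(H;c_i) = d_H(c_{i-1}) + d_H(c_{i+1}) + d_H(w_i) = 6 + d_H(w_i) \geq 7.
\end{equation*}
Recall that $\nu(H) = \nu(G_v) \leq \nu(G) - 3 d^2(v) + 17 d(v) - 18 - \N(C_4;G,N(v)) \leq 0 - 48 + 68 - 18 = 2$ by Property \ref{prop-removalcount}, since $G$ is $4$-regular (Property \ref{classprop10}) and $\N(C_4;G) = 0$ (Property \ref{classprop6}).

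Now apply Property \ref{prop-removalcount} to $H$ and the vertex $c_1$:
\begin{equation*}
  \nu(H_{c_1}) \leq \nu(H) - 3 d^2(H;c_1) + 17 \cdot 3 - 18 - \N(C_4;H,N(c_1)) \leq 2 + 15 - 3(6 + d_H(w_1)) \leq -1.
\end{equation*}
This contradicts assumption \eqref{bigass}, since $H_{c_1} < G$. Therefore $H_3$ contains no cycle of length five.
\end{proof}
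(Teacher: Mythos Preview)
Your proof contains a fatal arithmetic error. In the removal estimate you write
\[
\nu(H_{c_1}) \leq \nu(H) - 3d^2(H;c_1) + 17\cdot 3 - 18 = \nu(H) + 15 - 3d^2(H;c_1),
\]
but $17\cdot 3 - 18 = 51 - 18 = 33$, not $15$. With the correct constant the bound becomes
\[
\nu(H_{c_1}) \leq 2 + 33 - 3(6 + d_H(w_1)) = 17 - 3d_H(w_1).
\]
Since $\delta(H) = 3$ (every vertex of $H$ is trivalent or tetravalent), you have $d_H(w_1) \in \{3,4\}$, giving $\nu(H_{c_1}) \leq 8$ or $\nu(H_{c_1}) \leq 5$ respectively. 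Neither contradicts assumption \eqref{bigass}. So the single-removal approach simply does not close the gap; the $\nu$-budget after removing one $c_i$ is far too large.

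The paper's proof is completely different and much shorter: each $c_i \in V(H_3)$ is trivalent in $H = G_v$ but tetravalent in $G$, so each $c_i$ is adjacent to some vertex in $N_G(v)$. There are five $c_i$'s and only four neighbours of $v$, so by pigeonhole some $w \in N_G(v)$ is adjacent to at least two vertices of the $5$-cycle $C$. But Lemma~\ref{nbrsincycle} (applied in the $C_{\leq 4}$-free graph $G$) says any vertex outside a $5$-cycle has at most $\lfloor 5/3 \rfloor = 1$ neighbour in it, a contradiction. The key idea you missed is to exploit the ambient graph $G$ and the edges back to $N_G(v)$, rather than trying to squeeze a contradiction out of $\nu$-accounting inside $H$ alone.
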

\begin{proof}
Since $G$ is $4$-regular we have that each vertex of $V(H_3)$ is adjacent to one of the neighbours of $v$ in $G$. If there were a cycle of length five in $H_3$ then since $|N_G(v)| = 4$ one of the neighbours of $v$ would have to be adjacent to at least two vertices in the cycle, contradicting Lemma \ref{nbrsincycle}.
\end{proof}

Note that Lemmas \ref{monovalentsH3} and \ref{nofivecycles} corresponds to \cite[Lemma 5.2.3]{radziszowski-kreher91}.

\begin{lemma}
  \label{nosixcycles} (Analogue of \cite[Lemma 5.2.4]{radziszowski-kreher91})
$H_3$ contains no cycles of length six.
\end{lemma}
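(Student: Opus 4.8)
The plan is to argue by contradiction, mimicking the structure of the preceding lemma (Lemma \ref{nofivecycles}) and the argument style of \cite{radziszowski-kreher91}. Suppose $H_3$ contains a $6$-cycle $C = x_1 x_2 \cdots x_6$. Since $G$ is $4$-regular and $H = G_v$, every vertex of $V(H_3)$ has exactly one neighbour among $N_G(v)$ (a vertex in $V(H_3)$ has valency $3$ in $H$, hence valency $4$ in $G$, so it has exactly one $G$-neighbour outside $H$, and that neighbour must lie in $N_G(v)$ since $H = G_v$ removes $v$ and $N_G(v)$). So $C$ has six vertices each sending an edge to the $4$-element set $N_G(v)$, hence by pigeonhole some $u \in N_G(v)$ is adjacent to at least two of the $x_i$. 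By Lemma \ref{nbrsincycle} applied in $G$ — which is $C_{\leq 4}$-avoiding since $G$ is triangle-free with $\N(C_4;G)=0$ — a vertex outside a $6$-cycle can have at most $\lfloor 6/3\rfloor = 2$ neighbours on it, so $u$ has exactly two neighbours on $C$, and those two are at distance exactly $3$ along $C$ (distance $1$ would create a triangle, distance $2$ a $4$-cycle). Thus $u$ is adjacent to an "antipodal" pair, say $x_1$ and $x_4$.

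Next I would push this further to get a contradiction with $\nu(G) = 0$ (and $\N(C_4;G)=0$, $G$ $4$-regular) via Property \ref{prop-removalcount}-type vertex-removal estimates. The key observation is that $u$ together with the antipodal pair forces structure: $x_1, u, x_4$ and one of the two $C$-paths between $x_1$ and $x_4$ form a configuration which, combined with the fact that $x_1, \dots, x_6$ are all trivalent in $H$ (hence tetravalent in $G$ with their unique external neighbour in $N_G(v)$), constrains how the other vertices of $N_G(v)$ attach to $C$. I would count $e(C, N_G(v))$: it equals $6$, distributed among $4$ vertices, so the distribution is one of $2+2+1+1$ or $2+2+2+0$ or $3+\cdots$ (excluded by Lemma \ref{nbrsincycle}). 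In each case at least two vertices of $N_G(v)$ each have two $C$-neighbours at $C$-distance $3$. Now remove $v$ and then a suitably chosen further vertex (or remove a well-chosen vertex $w$ whose $G_w$ inherits a small-girth obstruction): the $6$-cycle $C$ together with these antipodal chords either produces a $C_4$ in $G$ directly (contradiction), or collapses to show $G_v = H$ has a component or local structure incompatible with Property \ref{classprop1}/\ref{monovalentsH3} — specifically, $H_3$ would contain a vertex configuration forcing a $C_5$-component or a forbidden trivalent vertex at distance two from a $K_2$-component monovalent vertex.

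More concretely, I would proceed as follows: (1) record that each $x_i$ is trivalent in $H$, so its three $H$-neighbours are its two $C$-neighbours plus one more vertex $y_i \in V(H)$ (possibly $y_i \notin V(H_3)$, i.e. $y_i$ tetravalent in $H$); the external $G$-neighbour $u_i \in N_G(v)$ is distinct from all $H$-neighbours. (2) Using $\N(C_4;G)=0$ and triangle-freeness, show the $y_i$ are pairwise non-adjacent in a way that blocks short cycles, and that for the antipodal pair $x_1,x_4$ sharing $u$, the vertices $x_2,x_3$ (and $x_5,x_6$) cannot also share an external neighbour without creating a $C_4$. (3) Remove $v$: then in $H = G_v$, consider $H_{x_1}$ (or $H$ restricted near $C$) and apply Property \ref{prop-removalcount} together with $\nu(G) \le 0$ to bound $\nu$ of the resulting graph below $0$, or apply Property \ref{classprop1}/Lemma \ref{monovalentsH3} to the forced local structure around $C$ in $H_3$. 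The main obstacle I expect is the bookkeeping in step (2)–(3): the $6$-cycle in $H_3$ interacts with the four vertices of $N_G(v)$ in several sub-cases depending on the attachment pattern and on whether the $y_i$ land inside or outside $V(H_3)$, and ruling out every pattern requires carefully chosen sequences of vertex removals so that each $\nu$-estimate (counting lost vertices, lost edges via second valencies, lost $4$-cycles, and the independence-number drop) lands strictly below the bound permitted by assumption \eqref{bigass}. Getting all these sub-cases to close uniformly — rather than ad hoc — is the delicate part, and I would organize it by first fixing the attachment type of $N_G(v)$ to $C$ (the $2+2+1+1$ vs. $2+2+2+0$ dichotomy) and then, within each, choosing the removal vertex to be one of the degree-$2$-to-$C$ vertices of $N_G(v)$ so that $G_{u}$ contains a small even cycle or a low-valency vertex with a $4$-cycle, contradicting Property \ref{classprop6} or \ref{classprop3}.
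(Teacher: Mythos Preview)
Your opening is sound: each $x_i$ in the putative $6$-cycle of $H_3$ is trivalent in $H$, hence has exactly one $G$-neighbour in $N_G(v)$; pigeonhole plus Lemma~\ref{nbrsincycle} (applicable since $\N(C_4;G)=0$) then forces some $u\in N_G(v)$ to hit an antipodal pair on $C$. But the proposed endgame does not close. You plan to derive a contradiction by locating, after a vertex removal, ``a low-valency vertex with a $4$-cycle'' and invoking Properties~\ref{classprop3} or~\ref{classprop6}. This cannot work: $\N(C_4;G)=0$, so \emph{no} induced subgraph of $G$ contains a $4$-cycle, and those properties never bite. Likewise the attachment pattern $2{+}2{+}2{+}0$ (three $u_j$'s each hitting an antipodal pair) produces only harmless $5$-cycles through $v$ in $G$; it does not by itself violate any $\nu$-bound, and removing one such $u_j$ just recreates a $6$-cycle of trivalent vertices in $G_{u_j}$. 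More fundamentally, your case split on how $N_G(v)$ attaches to $C$ is orthogonal to the quantity that actually controls the $\nu$-estimates: whether the third $H$-neighbour $y_i$ of each $c_i$ is trivalent or tetravalent in $H$, i.e.\ whether $d^2_H(c_i)=9$ or $10$. Your attachment data says nothing about the $y_i$.

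The paper's proof stays entirely inside $H$ and splits on exactly that quantity. If some $c_i$ has $d^2_H(c_i)=10$, one removes it and uses Lemma~\ref{isittrue} together with Property~\ref{prop-leq6} to force a $5$-cycle in $H_3$, contradicting Lemma~\ref{nofivecycles}. If all $c_i$ have $d^2_H(c_i)=9$, then the absence of $5$-cycles in $H_3$ forces $e\bigl(N_H(c_i),N_H(c_j)\bigr)=0$ for $\{i,j\}\subset\{1,3,5\}$; removing the independent triple $c_1,c_3,c_5$ in sequence then gives $\nu(H_{c_1,c_3,c_5})\le\nu(H)+6-2-8\le-2$ directly. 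That removal sequence, and the reason the second valencies do not drop along it, is the idea your plan is missing.
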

\begin{proof}
Since $G$ is $4$-regular we have that $\nu(H) \leq \nu(G) + 2 \leq 2$.

Suppose that $\{c_1,c_2,c_3,c_4,c_5,c_6\} \subseteq V(H_3)$ is a cycle of length six in $H_3$. Suppose one of the $c_i$ has a tetravalent neighbour in $H$, without loss of generality assume then that $d^2(H;c_1) = 10$. Then $\nu(H_{c_1}) \leq \nu(H) + 3 \leq 5$. If $c_1$ and $c_4$ have a common neighbour then, by Lemma \ref{isittrue}, either there is an edge $e \in E(H_{c_1})$: $c_4 \in V(C)$, where $C_5 \iso C \in \calC(H_{c_1}-e)$, or $c_4 \in V(C)$ where $C \iso C_5 \in \calC(H_{c_1})$. But then the four or five, in $H_{c_1}$, bivalent vertices of $V(C)$ has to be adjacent to the three neighbours of $c_1$. Accordingly there is a vertex with more than one neighbour in a cycle of length five, contradicting Lemma \ref{nbrsincycle}.

Hence, $c_3$ and $c_5$ are bivalent in $H_{c_1}$ with at least one trivalent neighbour, $c_4$. If $c_3$ has second valency at least six in $H_{c_1}$ then $\nu(H_{c_1,c_3}) \leq \nu(H_{c_1}) -2 \leq 3$, but $c_5$ is then monovalent in $H_{c_1,c_3}$ contradicting Property \ref{prop-mindeg2}. Hence $d^2(H_{c_1}; c_3) \leq 5$ and therefore $d(H_{c_1}; x_3) = 2$ where $x_3 \in N_H(c_3) \setminus \{c_2,c_4\}$. Since $H$ contains no cycle of length four, $x_3$ has at most one common neighbour with $c_1$, and therefore $x_3$ has to be trivalent in $H$. Similarly, if $x_5$ is such that $x_5 \in N_H(c_5) \setminus \{c_4,c_6\}$ then $x_5$ is trivalent in $H$ and has a common neighbour with $c_1$. Since $d^2(H_{c_1};c_3) = 5$ we get $\nu(H_{c_1,c_3}) \leq \nu(H_{c_1}) + 1 \leq 6$ and since $c_5$ is monovalent in $H_{c_1,c_3}$ we get, by Property \ref{prop-leq6}, that $x_5$ also is monovalent in $H_{c_1,c_3}$. Hence, $x_3x_5 \in E(H)$ and therefore we have a cycle $c_3c_4c_5x_5x_3$ of length five in $H_3$, contradicting Lemma \ref{nofivecycles}.

Thus all of the vertices $c_i$ of the 6-cycle have second valency nine. Clearly $c_1$ and $c_4$ have no common neighbour or we would get a cycle of length five in $H_3$, contradicting Lemma \ref{nofivecycles}. Hence $\nu(H_{c_1}) \leq \nu(H) + 6 \leq 8$ and $c_3$ is bivalent in $H_{c_1}$ with $d^2(H_{c_1}; c_3) \geq 6$ (since $c_1$ and $c_3$ do not have adjacent neighbours or we would get a cycle of length five or less in $H_3$), whence $\nu(H_{c_1,c_3}) \leq \nu(H_{c_1}) -2 \leq 6$. But $e(N(c_1) \cup N(c_3), N(c_5)) = 0$ or we would get a cycle of length five or less in $H_3$. Therefore $c_5$ is monovalent in $H_{c_1,c_3}$ but has second valency three, giving $\nu(H_{c_1,c_3,c_5}) \leq \nu(H_{c_1,c_3}) - 8 \leq -2$, contradicting assumption \eqref{bigass}.
\end{proof}

\begin{lemma}
  \label{analog525} (Analogue of \cite[Lemma 5.2.5]{radziszowski-kreher91})
Let $x,y \in V(H_3)$ be bivalent in $H_3$. Furthermore suppose $N_H(x) = \{t,x_1,x_2\}$ and $N_H(y) = \{t,y_1,y_2\}$ where $t,x_1,y_1 \in V(H_3)$. Then $x_1y_2,y_1x_2 \in E(H)$.
\end{lemma}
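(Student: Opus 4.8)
Throughout, recall that since $G$ is $4$-regular, triangle-free and $\N(C_4;G)=0$, Property~\ref{prop-removalcount} applied to $v$ gives $\nu(H)\le\nu(G)+2\le 2$, that $\N(C_4;H)=0$ (as $H$ is induced in $G$), and that every non-trivalent vertex of $H$ is tetravalent. The plan is to negate the conclusion and kill off the resulting configuration by two (occasionally three) successive $G_{\bullet}$-operations, each time invoking Property~\ref{prop-removalcount} to watch $\nu$ drop and then contradicting \eqref{bigass}.

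\textbf{Step 1 (set-up and a symmetry reduction).} First I would record, using only triangle-freeness, $\N(C_4;H)=0$ and Lemma~\ref{nofivecycles}, the following routine facts: the vertices $x,y,t,x_1,x_2,y_1,y_2$ together with the third neighbour $s$ of $t$ are pairwise distinct, except that $s$ may equal $x_1$ or $x_2$; the only neighbour of $y$ in $N_H[x]$ is $t$ (and symmetrically for $x$ in $N_H[y]$); $x_1\not\sim y_1$, because otherwise $x_1xty y_1x_1$ is a $5$-cycle in $H_3$; $x_1\not\sim x_2$ and $y_1\not\sim y_2$; and $x_2$ is adjacent to at most one of $y_1,y_2$ while $y_2$ is adjacent to at most one of $x_1,x_2$, since a second such edge would close a $4$-cycle in $H$. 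Exchanging the roles of $x$ and $y$ (equivalently $x_i\leftrightarrow y_i$) preserves all hypotheses of the lemma and swaps the two desired conclusions, so it suffices to prove $x_1y_2\in E(H)$.

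\textbf{Step 2 (remove $N_H[x]$).} Since $t,x_1\in V(H_3)$ are trivalent and $x_2$ is tetravalent, $d^2_H(x)=10$, so Property~\ref{prop-removalcount} yields $\nu(H_x)\le\nu(H)+3\le 5$. By Step~1, $N_{H_x}(y)=\{y_1,y_2\}$, so $y$ is bivalent in $H_x$, and a direct count of how many of the (distinct) vertices $x,t,x_1,x_2$ lie in $N_H(y_1)\cup N_H(y_2)$ shows $d^2_{H_x}(y)=7-a-b-c$, where $a,b,c\in\{0,1\}$ are the indicators of the edges $y_1x_2$, $y_2x_1$, $y_2x_2$, subject to $a+c\le1$ and $b+c\le1$ (Step~1). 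Now assume for contradiction that $x_1y_2\notin E(H)$, i.e. $b=0$; then $d^2_{H_x}(y)=7-a-c\ge 6$.

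\textbf{Step 3 (remove $N_{H_x}[y]$ and conclude).} Applying Property~\ref{prop-removalcount} to $y$ in $H_x$ and using $\N(C_4;H_x)=0$ gives $\nu\big((H_x)_y\big)\le\nu(H_x)-2\le 3$, and in the extremal case $d^2_{H_x}(y)=7$ the same estimate improves to $\nu\big((H_x)_y\big)\le\nu(H_x)-5\le 0$. Here $(H_x)_y=H\setminus\{x,t,x_1,x_2,y,y_1,y_2\}$ is a non-empty proper subgraph of $G$ ($n(G)-12\ge 5$ vertices), in which $y_1$ is bivalent (it lost only its edge to $y$) and $y_2$ is trivalent. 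In the case $d^2_{H_x}(y)=7$ we have $\nu\big((H_x)_y\big)\le0$, so by \eqref{bigass} every component has $\nu$-value $0$ and lies in $\mathcal G$; the component $C\ni y_1$ then satisfies $\delta(C)=2$ by Property~\ref{prop-mindeg2}, hence $C\iso Ch_k$ by Property~\ref{containsChk1}, and one derives a contradiction by comparing the second valency of $y_1$ inside $C$ (at most $5$) with the valencies its two surviving $H$-neighbours actually carry, locating those neighbours via the absence of $4$-cycles in $H$ and of short cycles in $H_3$ (Lemmas~\ref{nofivecycles},~\ref{nosixcycles}). In the remaining case $d^2_{H_x}(y)=6$ (so $a+c=1$), one removes one further well-chosen vertex — the neighbour $s$ of $t$, or a trivalent vertex of $(H_x)_y$ that is forced to become bivalent — and argues analogously, now using Lemma~\ref{isittrue} and Corollary~\ref{kor1} to pin down the new low-valency vertices, again contradicting \eqref{bigass}. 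This proves $x_1y_2\in E(H)$, and $y_1x_2\in E(H)$ follows by the symmetry of Step~1.

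\textbf{Expected main obstacle.} The genuinely delicate part is Step~3–4: carrying the valency and second-valency data faithfully through two or three iterated $G_{\bullet}$-operations, and disposing of all sub-cases determined by which of the edges $y_1x_2,\ y_2x_2,\ tx_1,\ \dots$ are present and whether $s\in\{x_1,x_2\}$. Each sub-case must be closed either by a clean $\nu<0$ inequality or by matching the forced local picture against the rigid structure of a $Ch_k$-component (Property~\ref{containsChk1}); keeping that case tree under control, rather than any single estimate, is the crux.
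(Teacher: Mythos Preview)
Your overall plan---remove $N_H[x]$, then $N_{H_x}[y]$, and track $\nu$---is exactly what the paper does, and your Step~1--2 bookkeeping is correct. But Step~3 contains a genuine error that breaks the argument: you write $(H_x)_y=H\setminus\{x,t,x_1,x_2,y,y_1,y_2\}$ and then immediately claim ``$y_1$ is bivalent'' and ``$y_2$ is trivalent'' \emph{in that graph}. Both $y_1$ and $y_2$ have been deleted, so neither has a valency there at all. The vertices that actually become bivalent in $H_{x,y}$ are $x_3$ and $y_3$, the \emph{other} trivalent neighbours of $x_1$ and $y_1$ in $H_3$ (guaranteed by Lemma~\ref{monovalentsH3}); you never introduce these, so your subsequent ``compare the second valency of $y_1$ inside $C$'' line has no content.

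With the correct bivalent witnesses $x_3,y_3$ in hand, the $d^2_{H_x}(y)=7$ case (your $a=c=0$) closes cleanly: $\nu(H_{x,y})\le0$ forces $x_3,y_3$ into $Ch_k$-components, and since $\N(C_4;H)=0$ these must be $C_5$'s, whose five vertices are each hit once by the five vertices of $N(x)\cup N(y)$, producing a $5$-cycle in $H_3$ and contradicting Lemma~\ref{nofivecycles}. The $d^2_{H_x}(y)=6$ case (your $a+c=1$) is not a matter of ``removing one further well-chosen vertex'': here $\nu(H_{x,y})\le3$ only, and the paper closes it by invoking the structural Property~\ref{classprop7} (the $\nu=3$, $\alpha(H_2)>1$ classification) to force a $C_5$ piece in $H_{x,y}$, then counting trivalent vertices against the budget $|V(H_3)|=12$, and finally, when $\alpha((H_{x,y})_2)=1$, deducing $x_3y_3\in E(H)$ and feeding that into Corollary~\ref{classcor7} applied to $H_{x_1}$. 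Your sketch for this case (Lemma~\ref{isittrue}, Corollary~\ref{kor1}) does not reach these tools and, as written, does not terminate.
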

\begin{proof}
We will show that $e(N(x),N(y)) \geq 2$, since then $e(N(x),N(y))=2$, or we would get cycles of length four or less. Since $x_1y_1 \notin E(H)$, by Lemma \ref{nofivecycles}, the only possibility is then that $x_1y_2,y_1x_2 \in E(N(x),N(y))$.

By Lemma \ref{monovalentsH3} both $x_1$ and $y_1$ has another neighbour in $H_3$ except for $x$ and $y$, let those be $x_3 \in V(H_3)$ and $y_3 \in V(H_3)$, respectively. Note that $x_3$ and $y_3$ are distinct by Lemma \ref{nosixcycles}.

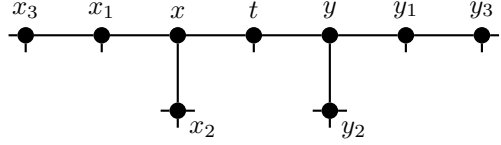
\begin{figure}[h]
\begin{center}
\begin{tikzpicture}[scale=0.5]
  \GraphInit[vstyle=Classic]
  \renewcommand*{\VertexSmallMinSize}{4pt}
  \Vertex[x=0,y=0,L=$x_3$,Lpos=90]{x3}
  \Vertex[x=2,y=0,L=$x_1$,Lpos=90]{x1}
  \Vertex[x=4,y=0,L=$x$,Lpos=90]{x}
  \Vertex[x=6,y=0,L=$t$,Lpos=90]{t}
  \Vertex[x=8,y=0,L=$y$,Lpos=90]{y}
  \Vertex[x=10,y=0,L=$y_1$,Lpos=90]{y1}
  \Vertex[x=12,y=0,L=$y_3$,Lpos=90]{y3}
  \Vertex[x=4,y=-2,L=$x_2$,Lpos=-45,Ldist=-2]{x2}
  \Vertex[x=8,y=-2,L=$y_2$,Lpos=-45,Ldist=-2]{y2}
  \Edge(x3)(x1)
  \Edge(x1)(x)
  \Edge(x)(t)
  \Edge(t)(y)
  \Edge(y)(y1)
  \Edge(y1)(y3)
  \Edge(x)(x2)
  \Edge(y)(y2)

  \Vertex[x=-0.5,y=0,empty]{x31}
  \Vertex[x=0,y=-0.5,empty]{x32}
  \Vertex[x=2,y=-0.5,empty]{x11}
  \Vertex[x=3.5,y=-2,empty]{x21}
  \Vertex[x=4.5,y=-2,empty]{x22}
  \Vertex[x=4,y=-2.5,empty]{x23}
  \Vertex[x=6,y=-0.5,empty]{t1}
  \Vertex[x=7.5,y=-2,empty]{y21}
  \Vertex[x=8.5,y=-2,empty]{y22}
  \Vertex[x=8,y=-2.5,empty]{y23}
  \Vertex[x=10,y=-0.5,empty]{y11}
  \Vertex[x=12,y=-0.5,empty]{y31}
  \Vertex[x=12.5,y=0,empty]{y32}
  \Edge(x31)(x3)
  \Edge(x32)(x3)
  \Edge(x11)(x1)
  \Edge(x21)(x2)
  \Edge(x22)(x2)
  \Edge(x23)(x2)
  \Edge(t1)(t)
  \Edge(y21)(y2)
  \Edge(y22)(y2)
  \Edge(y23)(y2)
  \Edge(y11)(y1)
  \Edge(y31)(y3)
  \Edge(y32)(y3)
\end{tikzpicture}
\end{center}
\caption{The neighbourhood of $x$ and $y$ in $H$.}
\end{figure}

If $e(N(x),N(y)) = 0$, then $\nu(H_{x,y}) \leq 0$ with $x_3$ and $y_3$ bivalent. Thus, $x_3$ and $y_3$ belong to a $C_5$-component of $H_{x,y}$. Each of the five vertices of $N(x) \cup N(y)$ has a single neighbour in the $C_5$-component, which would give a cycle of length five in $H_3$, contradicting Lemma \ref{nofivecycles}.

If $e(N(x),N(y)) = 1$ then we get $d^2(H_x; y) = 6$ and therefore $\nu(H_{x,y}) \leq \nu(H) + 3 - 2 \leq 3$. Note that $x_3$ and $y_3$ are bivalent in $H_{x,y}$. If $\alpha((H_{x,y})_2) > 1$, then (by Properties \ref{classprop1} and \ref{classprop7}) there is a component $C \in \Cc(H_{x,y})$ such that either $C \iso C_5$ or there is an $e \in E(C)$ with the property that $C-e = C_5 + C'$, where $C' \in \{C_5, W_5,(2C_7)_{2i}\}$. By the same reasoning as we used for excluding the $e(N(x),N(y))=0$-case we can see that $C \not \iso C_5$. In the other case, all of the vertices of the $C_5$ in $C-e$ must be adjacent to one (and exactly one) of the vertices in $N(x) \cup N(y)$, and vice versa. The reason for this is that we have no bivalent vertices in $H$ and no cycles of length five in $H_3$. There are six additional edges incident to $N(x) \cup N(y)$. The vertices $x_3,x_1,x,t,y,y_1,y_3$ and two more vertices of the $C_5$-part of $C-e$ are trivalent in $H$. Thus, $|V(C') \cap V(H_3)| \leq 3$, since we have in total twelve trivalent vertices, and the nine listed previously are not in $V(C')$. But $|V((C')_3)| = 14$, so $|V(C_3)| = 14$ and therefore $|V(C_3) \cap V(C')| = 13$. Clearly it is then impossible to have $|V(C') \cap V(H_3)| \leq 3$, which is a contradiction.

Thus, $\alpha((H_{x,y})_2) = 1$. Therefore $y_3x_3 \in E(H)$ and $y_3,x_3$ are the only two bivalent vertices of $H_{x,y}$. We have also $d^2(x_1), d^2(y_1) \geq 10$. Observe that $t$ and $y_3$ are non-adjacent bivalent vertices in $H_{x_1}$. This would however contradict Corollary \ref{classcor7}.

Hence, $e(N(x),N(y)) \geq 2$, as desired.
\end{proof}

\begin{lemma} \label{analog526}
(Analogue of \cite[Lemma 5.2.6]{radziszowski-kreher91})
If $t \in V(H)$ is trivalent in $H_3$ then it has two bivalent and one trivalent neighbour in $H_3$.
\end{lemma}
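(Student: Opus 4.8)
The plan is to argue entirely inside $H=G_v$, where $G$ is the minimal counterexample. Then $G$ is $4$-regular and triangle-free with $\N(C_4;G)=0$, so $H$ is triangle-free with $\N(C_4;H)=0$, every vertex of $H$ is trivalent or tetravalent, $H_3$ has exactly $12$ vertices, and $H_3$ has no cycle of length at most $6$ (combine $\N(C_4;H)=0$ with Lemmas~\ref{nofivecycles} and~\ref{nosixcycles}); moreover $\delta(H_3)\geq 1$ and every vertex of degree $1$ in $H_3$ lies in a $K_2$-component of $H_3$ by Lemma~\ref{monovalentsH3}. Fix $t$ trivalent in $H_3$ and write $N_H(t)=\{a,b,c\}\subseteq V(H_3)$; triangle-freeness and $\N(C_4;G)=0$ force $a,b,c$ to be pairwise non-adjacent with no common neighbour other than $t$. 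Since every vertex of $H_3$ has degree at most $3$, it is enough to show the multiset $\{d_{H_3}(a),d_{H_3}(b),d_{H_3}(c)\}$ equals $\{2,2,3\}$, i.e.\ to exclude the possibilities $\{1,*,*\}$, $\{3,3,3\}$, $\{2,3,3\}$ and $\{2,2,2\}$.

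The case $\{1,\ast,\ast\}$ is instant: if $d_{H_3}(a)=1$, then the $K_2$-component of $H_3$ containing $a$ is $\{a,t\}$, forcing $d_{H_3}(t)=1$, a contradiction; this $K_2$-component trick recurs. For $\{3,3,3\}$ and $\{2,3,3\}$ I would examine the ``second neighbourhood'': every $H_3$-neighbour $x$ of $t$ that is trivalent in $H_3$ contributes two further $H_3$-vertices $x_1,x_2$ (its neighbours other than $t$), and a bivalent one contributes a single $x_1$. Using girth $\geq 7$ one checks all these vertices are distinct, distinct from $\{t,a,b,c\}$, pairwise non-adjacent, and each again has degree $\geq 2$ in $H_3$. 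This accounts for $10$ vertices in the $\{3,3,3\}$ case (leaving $2$ others) and $9$ in the $\{2,3,3\}$ case (leaving $3$ others). Each second-neighbourhood vertex has one neighbour among its ``parent'' in $\{a,b,c\}$ and, again by girth $\geq 7$, all its remaining $H_3$-edges go to the small leftover set; so there are at least $6$ (resp.\ $5$) edges from the second neighbourhood to the leftover set. But each leftover vertex can be adjacent to at most one second-neighbourhood vertex, since two such adjacencies would close a $4$-cycle (same parent) or a $6$-cycle (different parents, via $t$); hence at most $2$ (resp.\ $3$) such edges, a contradiction both times. For $\{2,2,2\}$ I would apply Lemma~\ref{analog525} to the three pairs in $\{a,b,c\}$, each sharing the $H_3$-vertex $t$: writing $a',b',c'$ for the $H_3$-neighbours of $a,b,c$ other than $t$ and $a'',b'',c''$ for their tetravalent neighbours, the lemma yields $a'\sim b''$ and $a'\sim c''$ (and the symmetric relations), so $N_H(a')=\{a,b'',c''\}$; since $b'',c''$ are tetravalent this gives $d_{H_3}(a')=1$, contradicting the $K_2$-component trick because $a$ is bivalent in $H_3$. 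Combining the four exclusions proves the lemma.

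I expect the only delicate part to be the bookkeeping in the two counting cases: confirming that all named vertices are genuinely distinct and that every conceivable extra edge among the second-neighbourhood and leftover vertices would close a cycle of length $3,4,5$ or $6$. No new $\nu$-estimates are needed here; the proof is driven by the girth-$\geq 7$ property of the $12$-vertex graph $H_3$, the structure of its degree-$1$ vertices from Lemma~\ref{monovalentsH3}, and---only in the all-bivalent case---Lemma~\ref{analog525}.
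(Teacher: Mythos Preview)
Your proposal is correct and follows essentially the same route as the paper. Both arguments exclude a neighbour of $t$ being monovalent in $H_3$ via the $K_2$-component structure from Lemma~\ref{monovalentsH3}; both handle the cases with at least two trivalent $H_3$-neighbours by a counting argument in the girth-$\geq 7$ graph $H_3$ on $12$ vertices (the paper phrases it as counting distinct vertices at distances $0,1,2,3$ from $t$, you phrase it as edges from the second neighbourhood into a small leftover set, but these are the same obstruction); and both dispatch the all-bivalent case by applying Lemma~\ref{analog525} twice to force the ``$x_1$''/``$a'$'' vertex to have two tetravalent neighbours, hence degree~$1$ in $H_3$, contradicting the $K_2$-trick. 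Your bookkeeping is slightly more explicit (e.g.\ you note $b''\neq c''$ because a coincidence would give a $C_4$ through $t$), but no new idea is involved.
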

\begin{proof}
Recall that the number of trivalent vertices in $H$ is 12.

That $t \in V(H)$ is trivalent in $H_3$ means that $d^2_H(t) = 3 + 3 + 3$. Let $W = \{w_1,w_2,w_3\} = N(v)$ and suppose that at least two of the vertices $w_1,w_2,w_3$ have second valency nine. Then at least five out of the six distinct vertices at distance two from $t$ are trivalent in $H$, and each has at least one other trivalent neighbour not in $W$. Since there are no cycles of length five or six in $H_3$ by Lemmas \ref{nofivecycles} and \ref{nosixcycles} there is then at least five trivalent vertices at distance three from $t$.

Hence, there is one trivalent at distance 0 from $t$, three at distance $1$, at least five at distance $2$ and at least five at distance $3$, whence there are at least $14$ trivalent vertices in $H$, contradicting that there are 12 such vertices.

Therefore, at most one of the vertices $w_1,w_2$ and $w_3$ has second valency nine. If none of them had second valency nine then each of them has a trivalent and a tetravalent neighbour except for $t$. Let $x_1$ be the trivalent neighbour of $w_1$ that is not $t$, moreover let $y_2$ and $y_3$ be the tetravalent neighbours of $w_2$ and $w_3$, respectively. By applying Lemma \ref{analog525} (for $(x,y) = (w_1,w_2), (w_1,w_3)$) we get that $x_1y_2,x_1y_3 \in E(H)$. But then $w_1$ is the only trivalent neighbour of $x_1$, contradicting Lemma \ref{monovalentsH3}.

Hence, exactly one of the vertices $w_1,w_2$ and $w_3$ has second valency nine which completes the proof.
\end{proof}

We define two graphs on twelve vertices, $S_1$ and $S_2$, just as in \cite{radziszowski-kreher91}.

\begin{definition}
Denote the vertices of $C_{12}$ by $\{c_0,c_1,\dots,c_{11}\}$ so that $c_0c_1 \dots c_{11}$ forms the cycle of length 12. The graph $S_1$ is formed by adding the edge $c_0c_6$ to $C_{12}$ and the graph $S_2$ is formed by adding the two edges $c_0c_6$ and $c_3c_9$ to the $C_{12}$. 
\end{definition}

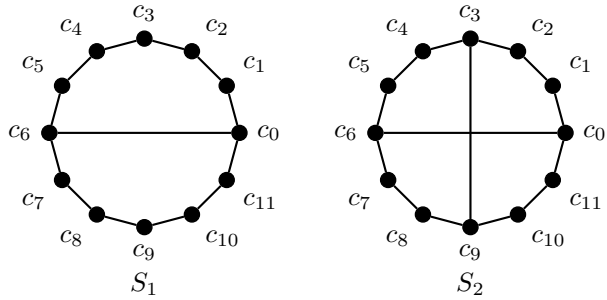
\begin{figure}[h]
\begin{center}
\begin{tikzpicture}[scale=0.5]
  \GraphInit[vstyle=Classic]
  \renewcommand*{\VertexSmallMinSize}{4pt}
  \SetGraphUnit{2.5}
  \Vertices[Math]{circle}{c_0,c_1,c_2,c_3,c_4,c_5,c_6,c_7,c_8,c_9,c_{10},c_{11}}
  \Edges(c_0,c_1,c_2,c_3,c_4,c_5,c_6,c_7,c_8,c_9,c_{10},c_{11},c_0)
  \Edge(c_0)(c_6)
  \node at (0,-4) {$S_1$};
\end{tikzpicture} \quad \begin{tikzpicture}[scale=0.5]
  \GraphInit[vstyle=Classic]
  \renewcommand*{\VertexSmallMinSize}{4pt}
  \SetGraphUnit{2.5}
  \Vertices[Math]{circle}{c_0,c_1,c_2,c_3,c_4,c_5,c_6,c_7,c_8,c_9,c_{10},c_{11}}
  \Edges(c_0,c_1,c_2,c_3,c_4,c_5,c_6,c_7,c_8,c_9,c_{10},c_{11},c_0)
  \Edge(c_0)(c_6)
  \Edge(c_3)(c_9)
  \node at (0,-4) {$S_2$};
\end{tikzpicture}
\end{center}
\caption{The two graphs $S_1$ and $S_2$.}
\end{figure}

\begin{lemma} \label{analog527}
(Analogue of \cite[Lemma 5.2.7]{radziszowski-kreher91}) If $C \in \calC(H_3)$, then \\
$C \in \{K_2,C_8,C_{10},C_{12},S_1,S_2\}$.
\end{lemma}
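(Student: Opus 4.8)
The plan is to reduce the classification to an elementary analysis of cycles carrying a few long chords on at most twelve vertices. Recall the facts about $H_3$ already established: $n(H_3) = 12$; $\delta(H_3) \geq 1$ and every monovalent vertex of $H_3$ lies in a $K_2$-component (Lemma~\ref{monovalentsH3}); $H_3$ has no cycle of length $5$ or $6$ (Lemmas~\ref{nofivecycles} and~\ref{nosixcycles}) and, being an induced subgraph of the triangle-free, $C_4$-free graph $G$, no cycle of length $3$ or $4$ either, so \emph{every} cycle of $H_3$ has length at least $7$; finally $\Delta(H_3) \leq 3$ (all twelve vertices are trivalent in $H$) and, by Lemma~\ref{analog526}, every trivalent vertex of $H_3$ has exactly one trivalent neighbour in $H_3$.

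Fix a component $C \in \calC(H_3)$. If $C$ contains a monovalent vertex then $C \iso K_2$ by Lemma~\ref{monovalentsH3}, so assume $\delta(C) \geq 2$. Let $M$ be the set of edges of $C$ joining two trivalent vertices of $C$; by Lemma~\ref{analog526}, $M$ is a perfect matching on the set $V_3(C)$ of trivalent vertices of $C$, say with $r := |M|$ edges. In $C - M$ every trivalent vertex drops to degree two while every bivalent vertex is unchanged, so $C - M$ is $2$-regular, hence a disjoint union of cycles of $H_3$; since each such cycle has at least $7$ vertices and $n(C) \leq 12 < 14$, there is exactly one of them. Thus $C = C_n + M$ where $n := n(C) \in \{7, 8, \dots, 12\}$ and the $r$ edges of $M$ are pairwise disjoint chords of the cycle $C_n$ (none lies on $C_n$, by construction of $M$).

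Next I would determine which chord patterns survive the girth constraints. A single chord of $C_n$ cuts the cycle into arcs of lengths $a$ and $n-a$, creating cycles of lengths $a+1$ and $n-a+1$; avoiding lengths in $\{3,4,5,6\}$ forces $n = 12$ and $a = 6$, i.e.\ every chord is a ``diameter'' $c_ic_{i+6}$. For two diameters $c_pc_{p+6}$ and $c_qc_{q+6}$ with $0 \leq p < q \leq 5$, the two cycles using both chords have lengths $2(q-p)+2$ and $14-2(q-p)$; avoiding $\{3,4,5,6\}$ in both forces $q-p = 3$. Hence at most two diameters can occur (three would require three indices in $\{0,\dots,5\}$ with pairwise differences equal to $3$, which is impossible for distinct integers). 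So $r = 0$ gives $C = C_n$ with $7 \leq n \leq 12$; $r = 1$ gives, after rotating so the chord is $c_0c_6$, $C \iso S_1$; and $r = 2$ gives, after rotating so the diameters are $c_0c_6$ and $c_3c_9$, $C \iso S_2$. Therefore every component of $H_3$ belongs to $\{K_2, C_7, C_8, \dots, C_{12}, S_1, S_2\}$.

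It remains to exclude $C_7$, $C_9$ and $C_{11}$, which I would do by a size count: the component sizes sum to $n(H_3) = 12$, and if some component had $7$ (resp.\ $9$, resp.\ $11$) vertices, the remaining components would have total size $5$ (resp.\ $3$, resp.\ $1$), none of which can be written as a sum of component sizes drawn from $\{2,7,8,9,10,11,12\}$ with each part at least $2$. Hence every component of $H_3$ lies in $\{K_2, C_8, C_{10}, C_{12}, S_1, S_2\}$, as claimed. I expect the real work to be in the third paragraph — verifying that the only chord configurations on $C_{12}$ free of cycles of length at most $6$ are those of $S_1$ and $S_2$ — together with the routine but essential check that deleting $M$ produces a single cycle rather than several; the final counting step is immediate once the list of candidates is in hand.
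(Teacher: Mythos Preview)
Your proof is correct and follows essentially the same route as the paper's: both arguments hinge on the girth bound (no cycles of length at most $6$), the pairing of trivalent vertices from Lemma~\ref{analog526}, and the parity/packing observation that the leftover vertex count must be a sum of $2$'s. The paper's version is terser --- it simply asserts that two paired trivalent vertices force $S_1$, four force $S_2$, and more than four cannot occur --- whereas you make this explicit via the $C-M$ decomposition and the chord analysis on $C_{12}$, which is a clean way to justify exactly those claims.
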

\begin{proof}
Recall that $n(H_3) = 12$. If $C \in \calC(H_3)$ then $\delta(C) \geq 2$ unless $C = K_2$ by Lemma \ref{monovalentsH3}. Suppose therefore that $\delta(C) \geq 2$, then $n(C) \geq 7$ because $C$ contains no cycles of length five or six, by Lemmas \ref{nofivecycles} and \ref{nosixcycles}. Since the only possible component in $H_3$ with no more than six vertices is $K_2$ we get that $C$ has an even number of vertices.

Then either $C$ is $2$-regular and then a cycle of length eight, ten or twelve, or it has at least one vertex of valency three. Suppose therefore that $C$ has a trivalent vertex, then it has at least two, and every trivalent vertex is paired with one other adjacent trivalent vertex, by Lemma \ref{analog526}.

If there are two trivalent vertices, then the only possible graph is $C = S_1$ and if there are four trivalent vertices the only possibility is $C = S_2$. It is not possible to have more than four trivalent vertices in $H_3$.
\end{proof}

\begin{lemma} \label{analog528} (Analogue of \cite[Lemma 5.2.8]{radziszowski-kreher91})
Let $C \in \calC(H_3)$, then $C \notin \{C_8,C_{10},C_{12}\}$.
\end{lemma}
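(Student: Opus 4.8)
The plan is to argue by contradiction. Suppose some component $C$ of $H_3$ is a cycle $c_0 c_1 \cdots c_{m-1}$ of length $m \in \{8,10,12\}$, indices taken modulo $m$. Since $G$ is $4$-regular we have $\nu(H) \le \nu(G) + 2 \le 2$, and since $\N(C_4;G) = 0$ with $H \le G$ induced, also $\N(C_4;H) = 0$. The first observation is purely local: each $c_i$ is trivalent in $H$ with exactly two neighbours inside $H_3$, namely its cycle-neighbours $c_{i-1}$ and $c_{i+1}$, and hence a single further neighbour $x_i$, which is therefore tetravalent.

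The first real step is to apply Lemma \ref{analog525} to each triple of consecutive cycle vertices $(x,t,y) = (c_{i-1}, c_i, c_{i+1})$. Its hypotheses hold with no degeneracy: $c_{i-1}$ and $c_{i+1}$ are bivalent in $H_3$, they have the common $H$-neighbour $c_i \in V(H_3)$, their other $H_3$-neighbours $c_{i-2}, c_{i+2}$ are trivalent, and their last neighbours $x_{i-1}, x_{i+1}$ are tetravalent (so none of the vertices involved can coincide). The lemma gives $c_{i-2}x_{i+1} \in E(H)$ and $c_{i+2}x_{i-1} \in E(H)$; letting $i$ range over all residues, this says $c_j$ is adjacent to both $x_{j+3}$ and $x_{j-3}$ for every $j$. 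As $x_j$ is the unique tetravalent neighbour of $c_j$, I obtain the key identity $x_{j+3} = x_j = x_{j-3}$ for all $j$, i.e.\ the map $j \mapsto x_j$ is constant on the cosets of $\langle 3 \rangle \le \ZZ_m$.

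The case split is then short. If $m \in \{8,10\}$ then $\gcd(3,m) = 1$, so all the $x_j$ coincide in a single vertex adjacent to every vertex of $C$, contradicting $\Delta(H) \le 4$. If $m = 12$ the $x_j$ take exactly three values, say $a = x_0 = x_3 = x_6 = x_9$, $b = x_1 = x_4 = x_7 = x_{10}$ and $c = x_2 = x_5 = x_8 = x_{11}$; these are distinct and pairwise non-adjacent, because the four vertices $c_0,c_3,c_6,c_9$ already exhaust the neighbourhood of the tetravalent vertex $a$, and similarly for $b,c$. Hence no edge of $H$ leaves $U := V(C) \cup \{a,b,c\}$, so $D := H[U]$ is a connected component of $H$, with $n(D) = 15$, $e(D) = 12 + 12 = 24$ (the cycle edges together with the edges $c_i x_i$), $\N(C_4;D) = 0$, and $\alpha(D) \ge 6$ since the six even-indexed cycle vertices form an independent set. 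Therefore $\nu(D) \ge 3\cdot 24 - 17\cdot 15 + 35\cdot 6 = 27$, contradicting $\nu(D) \le \nu(H) \le 2$, which follows from additivity of $\nu$ together with assumption \eqref{bigass} applied to each component of $H$ (every such component being a proper subgraph of $G$).

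The only point requiring genuine care is the bookkeeping in the $m = 12$ case: one must check both that $a,b,c$ are really distinct and that $U$ receives no edges from the rest of $H$, since the whole contradiction rests on being able to read off $\nu(D)$ exactly as the $\nu$-value of an entire component. The rest is routine — the application of Lemma \ref{analog525} goes through because all the relevant valencies are distinct (one could alternatively invoke $\N(C_4;H)=0$ to exclude short cycles), and the arithmetic with $\langle 3\rangle \le \ZZ_m$ is immediate.
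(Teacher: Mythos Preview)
Your proof is correct and follows essentially the same route as the paper: apply Lemma~\ref{analog525} to consecutive cycle vertices to force $x_j = x_{j+3}$, then use the orbit structure of $\langle 3\rangle \le \ZZ_m$. The only cosmetic difference is that for $m \in \{8,10\}$ the paper stops as soon as three or four cycle vertices share a tetravalent neighbour and invokes Lemma~\ref{nbrsincycle}, whereas you continue to the conclusion that all $x_j$ coincide and invoke $\Delta(H)\le 4$; for $m=12$ the two arguments are identical.
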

\begin{proof}
Suppose that $C \in \calC(H_3)$ and $C = C_k$ where $k \in \{8,10,12\}$. Let the vertices of $C_k$ be cyclically labelled by $c_1,c_2,\dots,c_k$. By Lemma \ref{analog525} we have that two vertices at distance three in the cycle must have a common tetravalent neighbour in $H$.

If $k=8$ then $c_1$ and $c_4$ have a common tetravalent neighbour, but so does $c_4$ and $c_7$. Since the vertices of $V(C)$ only have one tetravalent neighbour each we then get that all three vertices $c_1,c_4$ and $c_7$ have a common tetravalent neighbour, contradicting Lemma \ref{nbrsincycle}.

If $k=10$ then the following pairs of vertices have common tetravalent neighbours: $(c_1,c_4),(c_4,c_7)$ and $(c_7,c_{10})$. But then $c_1,c_4,c_7,c_{10}$ would all be adjacent to the same tetravalent vertex, again contradicting Lemma \ref{nbrsincycle}.

Finally, if $k = 12$ then we get in the same manner as above that $c_i,c_{i+3},c_{i+6}$ and $c_{i+9}$ have a common tetravalent neighbour, $v_i$, for $i \in \{1,2,3\}$. Since each of the three tetravalent vertices $v_1,v_2$ and $v_3$ have four neighbours in $V(C)$ they form a component $C'$ of $H$. However, $C'$ is a graph with 24 edges, 15 vertices and independence number 6 and therefore, $\nu(C') = 27$, contradicting $\nu(H) \leq 2$ since $H-C' < G$ and $G$ is assumed to be a minimal counterexample to Theorem \ref{mainthm}.
\end{proof}

\begin{lemma} \label{analog529} (Analogue of \cite[Lemma 5.2.9]{radziszowski-kreher91})
Let $C\in \calC(H_3)$, then $C \notin \{S_1,S_2\}$.
\end{lemma}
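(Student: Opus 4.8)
The plan is to rule out $C = S_1$ and $C = S_2$ as components of $H_3$ by exploiting the same kind of vertex‑counting argument that ran through Lemmas~\ref{analog525}--\ref{analog528}, combined with the structural description of the trivalent vertices of $H_3$ given by Lemma~\ref{analog526}. First I would recall the global bookkeeping: $H = G_v$ where $G$ is $4$-regular, $\N(C_4;G) = 0$, $\nu(H) \leq 2$, and $H_3$ has exactly $12$ vertices, all trivalent vertices of $H$ lying in $V(H_3)$. By Lemma~\ref{analog527} the only remaining components to exclude are $S_1$ and $S_2$; if $C \in \{S_1, S_2\}$ then (since $n(S_1) = n(S_2) = 12 = n(H_3)$) we would in fact have $H_3 = C$, so all twelve trivalent vertices of $H$ lie in this single component.

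For $C = S_1$: label the cycle $c_0 c_1 \cdots c_{11}$ with the extra chord $c_0 c_6$, so the trivalent‑in‑$H_3$ vertices are $c_0$ and $c_6$, and all other $c_i$ are bivalent in $H_3$ (hence have a unique tetravalent neighbour in $H$). By Lemma~\ref{analog525}, applied to pairs of bivalent‑in‑$H_3$ vertices at distance three along the cycle that share a common trivalent‑in‑$H_3$ neighbour, consecutive ``triples'' along each arc force common tetravalent neighbours; I would track these along the arc $c_1, c_2, \dots, c_5$ (whose endpoints both see $c_0$ or $c_6$) and the arc $c_7, \dots, c_{11}$, deriving — exactly as in the proof of Lemma~\ref{analog528} — that several of the $c_i$ are forced to share a single tetravalent neighbour, contradicting Lemma~\ref{nbrsincycle} (a vertex outside a $5$-cycle has at most one neighbour on it, and outside longer cycles at most $\lfloor k/3\rfloor$). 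Where the chord $c_0c_6$ interferes with the ``distance three'' pattern, I expect instead to produce a short cycle through trivalent vertices, contradicting $\N(C_4;G) = 0$ together with Lemma~\ref{nofivecycles}; alternatively, collecting the tetravalent neighbours of the $c_i$ into a component of $H$ and computing its $\nu$-value as in Lemma~\ref{analog528} yields $\nu \geq 3 > 2$, contradicting assumption \eqref{bigass}.

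For $C = S_2$: here $c_0, c_3, c_6, c_9$ are the four trivalent‑in‑$H_3$ vertices (paired as $c_0 c_6$?—no, the chords are $c_0c_6$ and $c_3c_9$, so the trivalent vertices are $c_0, c_6$ and $c_3, c_9$), and the bivalent‑in‑$H_3$ vertices sit in four arcs of two each: $\{c_1,c_2\}, \{c_4,c_5\}, \{c_7,c_8\}, \{c_{10},c_{11}\}$. Applying Lemma~\ref{analog525} to the appropriate pairs in adjacent arcs (sharing a common trivalent neighbour $c_i$), I would force a system of coincidences among the four tetravalent neighbours $t_1 = $ neighbour of $c_1,c_2$, etc.; the high symmetry makes it likely that at most two tetravalent vertices are involved, and then two of them acquire four neighbours among $\{c_i\}$, giving a component $C'$ of $H$ on $12 + 2 = 14$ vertices with many edges, whose $\nu$-value I compute to exceed $2$ — contradiction. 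I would also need to check that no triangle or $4$-cycle or $5$-cycle is inadvertently created, using $\N(C_4;G)=0$ and Lemma~\ref{nofivecycles}.

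The main obstacle I anticipate is the careful case analysis of \emph{which} tetravalent neighbours can coincide in the $S_2$ case: the cyclic symmetry gives several a priori possibilities for how the four ``arc‑neighbour'' tetravalent vertices identify with one another, and each must be pushed either to a forbidden short cycle or to a subgraph of $H$ with $\nu$-value at least $3$. Getting the $\nu$-value computation of the resulting dense component right (counting edges, vertices and independence number) is routine but needs care — this is exactly the $\nu(C') = 27$ type bound seen at the end of Lemma~\ref{analog528} — and I would organise the argument so that in every branch we land on one of these two contradictions.
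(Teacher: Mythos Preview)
Your treatment of the $S_2$ case is essentially the paper's: repeated use of Lemma~\ref{analog525} forces two tetravalent vertices $v_1,v_2$ with $v_1$ adjacent to $c_1,c_4,c_7,c_{10}$ and $v_2$ adjacent to $c_2,c_5,c_8,c_{11}$; since $c_0,c_3,c_6,c_9$ are trivalent in $H_3$ (hence have no tetravalent neighbour), the $14$-vertex set $V(C)\cup\{v_1,v_2\}$ is a component $C'$ of $H$ with $e(C')=22$, $\alpha(C')=5$, so $\nu(C')=3>2$, contradiction.

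The gap is in your $S_1$ plan. The same application of Lemma~\ref{analog525} still yields $v_1,v_2$ as above, and this is \emph{not} forbidden by Lemma~\ref{nbrsincycle}: a vertex outside a $12$-cycle may have up to $\lfloor 12/3\rfloor=4$ neighbours on it, which is exactly what $v_1$ and $v_2$ achieve. So your ``same as Lemma~\ref{analog528}'' contradiction does not fire here. Your fallback, computing $\nu$ of a closed component, also fails for $S_1$: the bivalent-in-$H_3$ vertices $c_3,c_9$ each have a tetravalent $H$-neighbour $p_3,p_9\notin\{v_1,v_2\}$ (since $v_1,v_2$ already have four neighbours), and $p_3,p_9$ have three further neighbours outside $V(C)\cup\{v_1,v_2\}$, so the induced subgraph is not a component of $H$.

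What the paper does for $S_1$ is different in kind: it takes the independent set $S=\{c_1,c_3,c_5,c_7,c_9,c_{11}\}$ and computes $\nu(H_S)$ directly. One has $N[S]\supseteq V(C)\cup\{v_1,v_2,p_3,p_9\}$, and splitting on whether $p_3=p_9$ gives $n(H)-n(H_S)\in\{15,16\}$, $e(H)-e(H_S)\in\{25,29\}$, and $\alpha(H_S)\le\alpha(H)-6$; in either case $\nu(H_S)\le\nu(H)-25<0$, contradicting \eqref{bigass}. You should replace your $S_1$ argument with this (or with any argument that copes with the two ``loose'' tetravalent neighbours $p_3,p_9$).
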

\begin{proof}
For $C \in \{S_1,S_2\}$, then by applying Lemma \ref{analog525} repeatedly we get that $c_i,c_{i+3},c_{i+6}$ and $c_{i+9}$ have a common tetravalent neighbour, $v_i$, for $i \in \{1,2\}$. If $C = S_2$, then $v_1$, $v_2$ and $V(C)$ form a component $C'$ of $H$. This component would have 14 vertices, 22 edges and independence number 5, whence $\nu(C') = 3$, contradicting that $\nu(H) \leq 2$ and $\nu(H-C') \geq 0$.

If $C = S_1$, then $c_3$ and $c_9$ have a (possibly common) tetravalent neighbour that is not $v_1$ or $v_2$. Let $S = \{c_1,c_3,c_5,c_7,c_9,c_{11}\}$. If $N(c_3) \cap N(c_9) = \emptyset$, then $n(H_S) = n(H) - 16$, $e(H_S) = e(H)-29$ and $\alpha(H_S) \leq \alpha(H) - 6$. Hence $\nu(H_S) \leq \nu(H) - 25 \leq -23$, contradicting assumption \eqref{bigass}. On the other hand if $N(c_3) \cap N(c_9) \neq \emptyset$ then $n(H_S) = n(H) - 15$, $e(H_S) = e(H) - 25$ and $\alpha(H_I) \leq \alpha(H) - 6$, whence $\nu(H_S) \leq \nu(H) - 30$ also contradicting assumption \eqref{bigass}.
\end{proof}

Note that by Lemmas \ref{analog527}, \ref{analog528} and \ref{analog529} we get that $H_3 \iso 6K_2$, since $H_3$ has twelve vertices. 

\begin{lemma}
\label{nosixcyclesG}
$G$ contains no cycles of length six.
\end{lemma}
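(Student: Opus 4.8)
The plan is to pass to a neighbourhood–deletion $H := G_{c_1}$ at a vertex of a putative $6$-cycle and exploit the fact — already established in the text for graphs of the form $G_v$ — that $H_3 \iso 6K_2$.

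Concretely, suppose $G$ contains a $6$-cycle $c_1 c_2 \cdots c_6$ (indices mod $6$). First I would record the numerology: since $G$ is $4$-regular with $\N(C_4;G)=0$, Property \ref{prop-removalcount} applied to $c_1$ gives $\nu(H) \le \nu(G) - 3\cdot 16 + 17\cdot 4 - 18 \le 2$, and by Lemma \ref{monovalentsH3} together with Lemmas \ref{analog527}--\ref{analog529} (as summarised just before this lemma) $H$ satisfies $\delta(H)=3$, $\Delta(H)=4$, and $H_3 \iso 6K_2$. In particular $V(H_3)$ is exactly the set of the twelve vertices at distance two from $c_1$ in $G$, and every vertex of $H_3$ has degree one in $H_3$.

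Next I would locate the relevant cycle-vertices inside $H$. Since $G$ is triangle-free, $c_1 \not\sim c_3$ and $c_1 \not\sim c_5$, so $c_3$ and $c_5$ lie at distance exactly two from $c_1$ in $G$; hence $c_3, c_5 \in V(H_3)$ and both are trivalent in $H$. Since $G$ is $C_{\le 4}$-avoiding, $c_1 \not\sim c_4$ (otherwise $c_1 c_4 c_5 c_6$ would be a $4$-cycle), so $c_4 \in V(H)$; then $c_3 c_4 c_5$ is a path in $H$, and $c_3 \not\sim_H c_5$ because a chord would create the triangle $c_3 c_4 c_5$ in $G$. Therefore $\dist_H(c_3, c_5) = 2$. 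Finally, applying Lemma \ref{monovalentsH3} to $c_3$ (which has $d(H_3;c_3)=1$) yields that $H$ has no trivalent vertex at distance two from $c_3$ in $H$; but $c_5$ is exactly such a vertex, a contradiction.

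I do not expect a genuine obstacle: all the heavy lifting has been done in the preceding lemmas forcing $H_3 \iso 6K_2$. The only points requiring a little care are (i) confirming that $H = G_{c_1}$ meets the hypotheses of Lemma \ref{monovalentsH3}, for which $4$-regularity is essential — it both bounds $\nu(H)$ and makes $V(H_3)$ coincide with $N_2(c_1)$ — and (ii) checking that $c_3, c_4, c_5$ genuinely survive into $H$ and that $c_3 c_4 c_5$ is an induced path there, both immediate from $G$ having girth at least five. This is the exact analogue, one level up in $G$ rather than in $H_3$, of the argument excluding $6$-cycles in $H_3$ in Lemma \ref{nosixcycles}.
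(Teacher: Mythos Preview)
Your argument is correct and is essentially the paper's own proof: take $H=G_{c_1}$, use $H_3\iso 6K_2$ to get $d(H_3;c_3)=1$, and then invoke Lemma~\ref{monovalentsH3} to contradict the presence of the trivalent vertex $c_5$ at distance two from $c_3$ in $H$. You have simply spelled out in more detail why $c_3,c_4,c_5$ survive into $H$ and why $\dist_H(c_3,c_5)=2$, which the paper leaves implicit.
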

\begin{proof}
Suppose that the vertices $\{c_1,c_2,\dots,c_6\} \subseteq V(G)$ formed a cycle of length six (labelled cyclically in order). Let $H = G_{c_1}$, then since $H_3 \iso 6K_2$ we get that $d(H_3;c_3) = 1$. However, $c_5$ would also be trivalent in $H$ and at distance two from $c_3$, contradicting Lemma \ref{monovalentsH3}.
\end{proof}

\begin{lemma}
\label{incidentfivecycles}
Through every pair of incident edges in $G$ there is a cycle of length five.
\end{lemma}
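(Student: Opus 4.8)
The plan is to read the statement off the structure $H_3 \iso 6K_2$ of $H := G_v$ that has just been established. Fix a vertex $v \in V(G)$ and let $W = \{w_1,w_2,w_3,w_4\} = N_G(v)$ (recall $G$ is $4$-regular by Property \ref{classprop10}). Any pair of incident edges shares a vertex, which we may take to be $v$, so the pair has the form $\{vw_i, vw_j\}$ with $i \neq j$; it therefore suffices to exhibit, for each such pair, a cycle of length five on which $w_i$, $v$, $w_j$ appear consecutively.

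First I would pin down the local picture. Writing $P_i := N_G(w_i) \setminus \{v\}$, I claim $V(H_3) = P_1 \cupdot P_2 \cupdot P_3 \cupdot P_4$ with $|P_i| = 3$. Indeed, a vertex $u \notin N[v]$ has $d_H(u) = 4 - |N_G(u) \cap W|$, so $u$ is trivalent in $H$ exactly when it has a single neighbour in $W$; and it cannot have two neighbours $w_i, w_j$ in $W$, since $\N(C_4;G) = 0$ forbids the $4$-cycle $v w_i u w_j$. The same observation shows the $P_i$ are pairwise disjoint, each has size $3$ since $G$ is $4$-regular, and (by triangle-freeness) $P_i \cap W = \emptyset$; as $|V(H_3)| = 12$, this exhausts $V(H_3)$. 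Moreover, because $G$ is triangle-free no edge of $H_3$ lies inside one petal (it would complete a triangle with the corresponding $w_i$), so each of the six edges of $H_3 \iso 6K_2$ joins two distinct petals.

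The key step is to show that at most one edge of $H_3$ joins a prescribed pair of petals $P_i, P_j$. Suppose $a_1b_1$ and $a_2b_2$ were two such edges with $a_1,a_2 \in P_i$ and $b_1,b_2 \in P_j$; since in $6K_2$ no vertex meets two edges, $a_1 \neq a_2$ and $b_1 \neq b_2$, and all of $a_1,a_2,b_1,b_2,w_i,w_j$ are pairwise distinct (petal vertices lie outside $W$ and $P_i \cap P_j = \emptyset$). Then $w_i a_1 b_1 w_j b_2 a_2 w_i$ is a cycle of length six in $G$, contradicting Lemma \ref{nosixcyclesG}. Hence each of the $\binom{4}{2} = 6$ petal-pairs carries at most one of the six edges of $H_3$, so each carries exactly one.

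Finally I would conclude: given the incident pair $\{vw_i, vw_j\}$, let $ab \in E(H_3)$ be the unique edge with $a \in P_i$ and $b \in P_j$. Then $v, w_i, a, b, w_j$ are five distinct vertices and $vw_i, w_ia, ab, bw_j, w_jv \in E(G)$, so they span a $5$-cycle through the two given edges. The only real obstacle is spotting the six-cycle in the key step; everything else is bookkeeping once Lemmas \ref{analog527}--\ref{analog529} have forced $H_3 \iso 6K_2$, and no fresh appeal to assumption \eqref{bigass} is needed.
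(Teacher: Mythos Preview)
Your proof is correct, but it takes a genuinely different route from the paper's.

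The paper argues directly via a $\nu$-computation: if no $5$-cycle passes through $ux,xv$ then $e(N(u),N(v))=0$, so in $G_u$ the trivalent vertex $v$ has second valency $12$; Property~\ref{prop-removalcount} then gives $\nu(G_{u,v})\le\nu(G_u)-3\le-1$, contradicting assumption~\eqref{bigass}. This is a two-line counting argument that does not use the $6K_2$ structure of $H_3$ or Lemma~\ref{nosixcyclesG} at all.

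Your argument instead exploits the just-established decomposition $H_3\iso 6K_2$ combinatorially: you partition $V(H_3)$ into the four ``petals'' $P_i=N(w_i)\setminus\{v\}$, observe that the six $K_2$-edges must run between petals, and then use Lemma~\ref{nosixcyclesG} to force a perfect matching on petal-pairs, yielding the required $5$-cycle for every pair $\{w_i,w_j\}$. What this buys is a little extra structure---you in fact show that \emph{exactly one} edge of $H_3$ joins each petal-pair, which is slightly more than the lemma asks for---and you avoid a fresh appeal to~\eqref{bigass}. The cost is that your proof depends on Lemma~\ref{nosixcyclesG}, whereas the paper's does not; the paper's version is shorter and logically more self-contained.
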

\begin{proof}
Suppose otherwise and let $ux,xv \in E(G)$ be a pair of incident edges through which there is no cycle of length five. We have $d^2(u) = 16$ and therefore $\nu(G_u) \leq \nu(G) + 2 \leq 2$. Since there is no cycle of length five through $u,x,v$ we have that $e(N(u),N(v)) = 0$ and therefore $d^2(G_u;v) = 12$. This would however give $\nu(G_{u,v}) \leq \nu(G_v) - 3 \leq -1$, contradicting assumption \eqref{bigass}.
\end{proof}

\begin{lemma}
\label{fiveshare}
Two cycles of length five in $G$ share at most one edge.
\end{lemma}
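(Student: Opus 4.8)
The plan is to argue by contradiction through the symmetric difference of edge sets. Suppose $C$ and $C'$ are two distinct $5$-cycles in $G$ sharing at least two edges, and set $s = |E(C)\cap E(C')|$. Since the edge set of a $5$-cycle determines the cycle, $s=5$ would force $C=C'$, so $2\leq s\leq 4$. Let $D$ be the subgraph of $G$ with edge set $E(C)\triangle E(C')$; it is nonempty (as $C\neq C'$) and has exactly $10-2s\in\{6,4,2\}$ edges.

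First I would record that $D$ is an \emph{even} subgraph: for every vertex $v$ the number of edges of $D$ at $v$ equals $|E_C(v)|+|E_{C'}(v)|-2|E_C(v)\cap E_{C'}(v)|$, which is congruent mod $2$ to $\deg_C(v)+\deg_{C'}(v)$ and hence even (each term lies in $\{0,2\}$), and moreover $\deg_D(v)\leq 4$. Next comes the key structural step: since $G$ is triangle-free and $\N(C_4;G)=0$, the subgraph $D$ has no cycle of length $3$ or $4$, and I would deduce from this that $\Delta(D)\leq 2$. Indeed, a vertex $v$ with $\deg_D(v)=4$ would have four distinct $D$-neighbours, each of $D$-degree $\geq 2$ by evenness, so $2|E(D)|=\sum_u\deg_D(u)\geq 4+4\cdot 2=12$; thus $|E(D)|=6$, all four neighbours of $v$ have degree exactly $2$, and the two remaining edges of $D$ must form a perfect matching on $N_D(v)$, producing a triangle through $v$ — impossible. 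Being even with $\Delta(D)\leq 2$, the graph $D$ is a disjoint union of cycles, each of length at least five.

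The three cases then close immediately. If $|E(D)|=2$, no nonempty even subgraph can have only two edges, a contradiction (this excludes $s=4$). If $|E(D)|=4$, then $D$ would be a disjoint union of cycles of length $\geq 5$ using only four edges, impossible (this excludes $s=3$). If $|E(D)|=6$, then $D$ is a single cycle of length six, contradicting Lemma \ref{nosixcyclesG} (this excludes $s=2$). Since every possibility for $s$ is ruled out, $C$ and $C'$ share at most one edge.

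The only mildly delicate point is the bound $\Delta(D)\leq 2$, which uses the interplay of the edge count $|E(D)|\leq 6$ with triangle-freeness; the rest is a short parity/counting argument resting on the three forbidden-subgraph facts ($G$ has no $C_3$, no $C_4$, no $C_6$). If one preferred to avoid the even-subgraph formalism, the same conclusion follows from a direct case split on whether the two shared edges are incident or vertex-disjoint, writing out $E(C)\triangle E(C')$ explicitly and recognising it as a $6$-cycle (or spotting a $3$- or $4$-cycle) in each of the few configurations; that route is more computational but conceptually identical.
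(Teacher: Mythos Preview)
Your proof is correct and takes a genuinely different route from the paper's. The paper argues that two $5$-cycles sharing at least two edges must share exactly two \emph{consecutive} edges (else a short cycle appears), and then, letting $v$ be the common vertex of those two edges, invokes Lemma~\ref{incidentfivecycles} together with the structural fact $H_3\cong 6K_2$ (for $H=G_v$) to produce a vertex of $H_3$ with degree at least two, a contradiction. Your argument bypasses both of those ingredients entirely: working with the symmetric difference $D=E(C)\triangle E(C')$, you use only parity and the three forbidden cycle lengths ($C_3$, $C_4$ from Property~\ref{classprop6}, and $C_6$ from Lemma~\ref{nosixcyclesG}) to force $D$ to be a $6$-cycle when $s=2$ and to rule out $s\in\{3,4\}$ by cardinality. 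This is more self-contained and would apply to any graph of girth at least five containing no $6$-cycle; the paper's proof, by contrast, leans on the specific $4$-regular structure already developed for the minimal counterexample. The only step worth a sentence of extra justification is that the $2$-regular even graph $D$ on six edges cannot split as two triangles (you use triangle-freeness again here), hence is a single $C_6$; you essentially say this, but making it explicit would remove any doubt.
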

\begin{proof}
Suppose otherwise, then the two cycles, $C_1$ and $C_2$, of length five would share exactly two consecutive edges or we would get a cycle of length four or less in $G$. Suppose therefore that the edges shared are $x_1v \in E(G)$ and $vx_2 \in E(G)$. Let $x_3$ and $x_4$ be the two remaining neighbours of $v$, $H = G_v$ and $X_i = N(x_i)\setminus \{v\}$. Because of Lemma \ref{incidentfivecycles} there are also cycles of length five through the pairs of incident edges $(x_2v,vx_3)$ and $(x_2v,vx_4)$, whence $|E(X_2,X_3)|,|E(X_2,X_4)| \geq 1$. But since two of the vertices in $X_2$ belong to $C_1$ or $C_2$, and as such get paired with a vertex in $X_1$ we must have that there are edges from $X_3$ and $X_4$ to the same vertex in $X_2$, which then is bivalent in $H_3$, contradicting that $H_3 \iso 6K_2$.
\end{proof}

Now we are ready to prove a lemma that will contradict Lemma \ref{nosixcyclesG} and complete the proof of the main theorem (Theorem \ref{mainthm}).

\begin{lemma} \label{sixcyclesG} (Analogous of an argument in the proof of \cite[Theorem 3]{jart})
$G$ contains at least one cycle of length six.
\end{lemma}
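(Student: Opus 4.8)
The plan is to fix a $5$-cycle $C$ (one exists by Lemma~\ref{incidentfivecycles}), analyse the two shells of vertices around $C$, and show that the absence of a $6$-cycle is incompatible with the rigid structure $(G_u)_3\cong 6K_2$ that Lemmas~\ref{analog527}--\ref{analog529} give at every vertex $u$ (valid for all $u$ since $G$ is $4$-regular with $\N(C_4;G)=0$, by Properties~\ref{classprop6} and \ref{classprop10}). Write $C=c_1c_2c_3c_4c_5$ and, for each $i$, let $d_i,d_i'$ be the two neighbours of $c_i$ off $C$. Since $G$ is triangle-free and $C_4$-free, the ten vertices $d_i,d_i'$ are distinct and no vertex outside $C$ is adjacent to two vertices of $C$. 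I would first record two $6$-cycle constructions obtained by closing a short path through $C$: if $d_i\sim d_j$ then $c_i,c_j$ are not adjacent on $C$ (else a $4$-cycle), so they are at distance $2$ on $C_5$, and then $d_i,d_j$ together with the length-$3$ arc of $C$ between $c_i,c_j$ form a $6$-cycle; likewise, if a vertex $z\notin C$ is adjacent to $d_i$ and $d_j$ with $c_i,c_j$ at distance $2$ on $C$, then $z,d_i,d_j$ with the length-$2$ arc between $c_i,c_j$ form a $6$-cycle. Hence, assuming $G$ has no $6$-cycle, no two of the $d_i,d_i'$ are adjacent, no vertex is adjacent to two of them whose $C$-indices are at distance $2$, every $d_i$ has its three neighbours other than $c_i$ at distance exactly $2$ from $C$, and every vertex at distance $2$ from $C$ has at most two neighbours among the $d_i,d_i'$, with consecutive $C$-indices.

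Next I would feed this back into Lemma~\ref{incidentfivecycles}. First note that for any vertex $u$ and any two neighbours $a,b$ of $u$ there is exactly one $5$-cycle $u\,a\,p\,q\,b$ through the $2$-path $a\,u\,b$: such $p,q$ are trivalent in $G_u$ (they lie at distance $2$ from $u$, by triangle- and $C_4$-freeness), so in $(G_u)_3\cong 6K_2$ the vertex $q$ is the matching partner of $p$, and exactly one of the three vertices of $N(b)\setminus\{u\}$ is matched into the triple $N(a)\setminus\{u\}$. Now for the $2$-path $d_i\,c_i\,c_{i+1}$ this unique $5$-cycle must close through $d_{i+1}$ or $d_{i+1}'$: closing through $c_{i+2}$ would require the last vertex to lie in $N(c_{i+2})=\{c_{i+1},c_{i+3},d_{i+2},d_{i+2}'\}$, and each of these four choices forces either a $4$-cycle or the edge $d_i\sim d_{i+2}$, contradicting the first paragraph. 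Thus each edge $c_ic_{i+1}$ lies, besides $C$, in exactly two further $5$-cycles, and these pair up $\{d_i,d_i'\}$ with $\{d_{i+1},d_{i+1}'\}$ through common neighbours that lie at distance $2$ from $C$.

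I would then take $H:=G_{c_1}$; as $G$ is $4$-regular with $\N(C_4;G)=0$ we have $\nu(H)\le 2$, $\delta(H)\ge 3$, $\Delta(H)\le 4$, so $H_3\cong 6K_2$. Its twelve trivalent vertices are exactly $c_3,c_4,d_2,d_2',d_5,d_5'$ together with the six distance-$2$-from-$C$ neighbours of $d_1$ and of $d_1'$, and these are distinct (a coincidence would yield a $4$-cycle). The matching of $H_3$ contains $\{c_3,c_4\}$, and by the first paragraph none of $d_2,d_2',d_5,d_5'$ is matched to another of them, so each of $d_2,d_2',d_5,d_5'$ is matched to one of the six distance-$2$ vertices; combined with the two further $5$-cycles through $c_1c_2$ and through $c_1c_5$ from the previous paragraph, this forces each of $d_1,d_1'$ to share a distance-$2$ neighbour both with $\{d_2,d_2'\}$ and with $\{d_5,d_5'\}$, and its third distance-$2$ neighbour to be matched (in $H_3$) with one belonging to the other of $d_1,d_1'$. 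Iterating once more — applying the same analysis to $G_{d_1}$ (legitimate since $d_1$ is tetravalent) and combining with Lemmas~\ref{monovalentsH3} and \ref{analog525}, or else running a direct $\nu$-count on a two-step removal $G_{s_1,s_2}$ in the style of the proofs of Lemmas~\ref{analog528} and \ref{analog529} — forces either a vertex of valency exceeding $4$ or a proper subgraph of $G$ with negative $\nu$-value, contradicting $4$-regularity or assumption~\eqref{bigass}. Hence $G$ contains a $6$-cycle, and together with Lemma~\ref{nosixcyclesG} this contradicts the existence of the minimal counterexample $G$, completing the proof of Theorem~\ref{mainthm}.

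The hard part is this final step: the configuration imposed on the two shells around $C$ satisfies the local $6K_2$-conditions in several genuinely different ways, and each must be eliminated by the same kind of careful bookkeeping — tracking which distance-$2$ and distance-$3$ vertices must coincide, which edges between them are forced, and locating in each case a short cycle, an over-full vertex, or a $\nu$-negative subgraph — which is exactly the flavour of the concluding case analysis in Section~5.2 of Radziszowski--Kreher and in Backelin's proof of Theorem~3, and where the preceding Lemmas~\ref{monovalentsH3}--\ref{fiveshare} are all used together.
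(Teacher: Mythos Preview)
Your proposal has a genuine gap: the final step is never actually executed. You set up an elaborate two-shell picture around a fixed $5$-cycle, but when the moment comes to derive the contradiction you write ``Iterating once more \dots\ forces either a vertex of valency exceeding $4$ or a proper subgraph of $G$ with negative $\nu$-value'' without carrying out any such computation, and then openly concede that this is ``the hard part'' requiring ``careful bookkeeping'' in ``several genuinely different ways''. That is a sketch of where a proof might go, not a proof. Nothing earlier in your argument rules out the possibility that the local $6K_2$ constraints are simultaneously satisfiable around $C$ without a $6$-cycle, and you have not identified a specific removal $G_{s_1,s_2,\dots}$ whose $\nu$-value you can bound below zero.

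By contrast, the paper's proof is short and avoids this structural analysis entirely. It fixes a single edge $uv$, uses Lemma~\ref{incidentfivecycles} together with Lemma~\ref{fiveshare} (which you never invoke) to obtain two $5$-cycles through $uv$ sharing only that edge, and lets $S=\{x_1,x_2,x_3,x_4\}$ be the four vertices of these cycles adjacent to $u$ or $v$ but distinct from them. A direct $\nu$-count on $G_S$ then does all the work: if the only edges among the $N(x_i)$ are the two forced by Lemma~\ref{incidentfivecycles}, one gets $\nu(G_S)\le\nu(G)+2+0+0-5\le -3$, contradicting \eqref{bigass}; any additional edge among the $N(x_i)$ immediately closes a $6$-cycle (or a forbidden $4$-cycle). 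The whole argument fits in a paragraph. The moral is that once Lemmas~\ref{incidentfivecycles} and~\ref{fiveshare} are in hand, removing a well-chosen independent set of size four and counting is far cleaner than trying to reconstruct the graph locally around a $5$-cycle.
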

\begin{proof}
Let $uv \in E(G)$. By Lemmas \ref{incidentfivecycles} and \ref{fiveshare} there are two cycles $C,C'$ of length five through $uv$ which do not share any other edge than $uv$. Let $S = \{x_1,x_2,x_3,x_4\} = N(\{u,v\}) \cap (V(C) \cup V(C'))$ where $x_1,x_2$ are adjacent to $u$ and $x_3,x_4$ are adjacent to $v$. If the only edges between the neighbourhoods of the $x_i$ were the edges in $E(N(x_1),N(x_2))$ and $E(N(x_3),N(x_4))$ guaranteed by Lemma \ref{incidentfivecycles} then we would get $\nu(G_S) \leq \nu(G) + 2 + 0 + 0 - 5 = -3$, contradicting assumption \eqref{bigass}. If $e(N(x_{i}),N(x_{i+1})) \geq 2$, where $i \in \{1,3\}$, then we either get a cycle of length four or a cycle of length six through $x_i$ and $x_{i+1}$.

If there are no cycle of length six through the $x_1,x_2$ or $x_3,x_4$ we must instead have an edge in $E(N(x_1)\cup N(x_2) \setminus \{u\},N(x_3)\cup N(x_4) \setminus \{v\})$ which gives a cycle of length six through $uv$.
\end{proof}

\subsection{Proof of main theorem}
\label{conclusion}

We have now shown that if $G$ is a minimal counterexample, either to $\nu$ being non-negative or to $G$ being such that $G \notin \mathcal{G}$ but with $\nu$-value zero, then $G$ must contain cycles of length six by Lemma \ref{sixcyclesG} but also not contain cycles of length six by Lemma \ref{nosixcyclesG}. Hence, there is no minimal counterexample and therefore no counterexample at all. We have therefore shown Theorem \ref{mainthm} to hold. We know that all triangle-free graphs have non-negative $\nu$-value and we that those that have $\nu$-value zero are exactly those in $\{W_5, (2C_7)_{2i}\} \cup \{Ch_k; k \geq 2\} \cup \{BC_k; k \geq 5\}$.

Now, since Theorem \ref{mainthm} are proven, all the properties of Section \ref{theinvariant} that were made under assumption \eqref{bigass} now can be read as general propositions. For example we have fully classified the graphs with $\nu$-value at most two with bivalent vertices (see Property \ref{classprop1}). 

\printbibliography

\end{document}